\documentclass[leqno]{amsart}%
% Dejan's Packages
\usepackage{hyperref}
\usepackage{pdfsync}
\usepackage{mathptmx}
\usepackage{color}
%\usepackage{amsthm}
% end of Dejan's packages
%\usepackage{makeidx}
\usepackage{braket}
\usepackage{bm}
\usepackage{amsmath}
\usepackage{graphicx}
\usepackage{caption}
\usepackage{subcaption}
\usepackage{pb-diagram}
\usepackage{amssymb}%
\setcounter{MaxMatrixCols}{30}
%TCIDATA{OutputFilter=latex2.dll}
%TCIDATA{Version=5.50.0.2960}
%TCIDATA{CSTFile=LaTeX article (bright).cst}
%TCIDATA{Created=Monday, August 27, 2007 13:53:03}
%TCIDATA{LastRevised=Monday, September 27, 2010 08:35:43}
%TCIDATA{<META NAME="GraphicsSave" CONTENT="32">}
%TCIDATA{<META NAME="SaveForMode" CONTENT="1">}
%TCIDATA{BibliographyScheme=Manual}
%TCIDATA{<META NAME="DocumentShell" CONTENT="General\SW\Blank - Standard LaTeX Article">}
%TCIDATA{Language=American English}
%BeginMSIPreambleData
\providecommand{\U}[1]{\protect\rule{.1in}{.1in}}
%EndMSIPreambleData
\newtheorem{theorem}{Theorem}[section]

\newtheorem{corollary}[theorem]{Corollary}

\newtheorem{definition}[theorem]{Definition}

\newtheorem{lemma}[theorem]{Lemma}

\newtheorem{proposition}[theorem]{Proposition}

%\newtheorem{solution}[theorem]{Solution}
%\newtheorem{observation}[theorem]{Observation}
%\newtheorem{summary}[theorem]{Summary}
%\newenvironment{proof}[1][Proof]{\textbf{#1.} }{\ \rule{0.5em}{0.5em}}

% Dejan's Commands:
\theoremstyle{remark}
% This sets the font so that definitions etc are not in italics
\newtheorem{remark}[theorem]{Remark}
\newcommand{\R}{\mathbb{R}}
\newcommand{\N}{\mathbb{N}}

\newcommand{\te}{\textrm}
\newcommand{\eu}{\hat u}
\newcommand{\tacka}{\,\cdot\,}
\newcommand{\veps}{\varepsilon}
\newcommand{\tTV}{GTV}

\DeclareMathOperator{\vol}{Vol}

\DeclareMathOperator{\id}{Id}

\DeclareMathOperator{\esssup}{esssup}

\DeclareMathOperator{\Per}{Per}
\DeclareMathOperator{\sign}{sign}

\DeclareMathOperator{\Proj}{Proj}
\DeclareMathOperator{\GPer}{GPer}
\DeclareMathOperator{\dist}{dist}

\DeclareMathOperator{\divergence}{div}
\DeclareMathOperator{\Lip}{Lip}

\definecolor{mygreen}{rgb}{0.1,0.75,0.2}

\newcommand{\nc}{\normalcolor}
\newcommand{\closure}[1]{\overline{#1}}
\newcommand{\converges}[1]{ \overset{#1}{\longrightarrow}} 
\newenvironment{list1}
  {\begin{list}
 {\textsc{\arabic{broj1}.} }
 {\usecounter{broj1}
  \setlength{\itemindent}{-1pt}
  \setlength{\listparindent}{-1pt}
  \setlength{\itemsep}{0pt}}
  \setlength{\labelwidth}{30pt}
  \setlength{\parsep}{0pt}
  }
  {\end{list}}

\addtolength{\textwidth}{50pt}
\addtolength{\oddsidemargin}{-25pt}
\addtolength{\evensidemargin}{-25pt}
\addtolength{\textheight}{15pt}

\title{Continuum limit of total variation on point clouds}
\author{Nicol\'as Garc\'ia Trillos and  Dejan Slep\v{c}ev}
\address{
Department of Mathematical Sciences, Carnegie Mellon University, Pittsburgh, PA, 15213, USA. \\
tel. +412 268-2545, 
emails: ngarciat@andrew.cmu.edu, slepcev@math.cmu.edu}

\begin{document}

\keywords{total variation, point cloud, discrete to continuum limit, Gamma-convergence, graph cut, graph perimeter, cut capacity,  graph partitioning, random geometric graph, clustering}
\subjclass{49J55, 49J45, 60D05, 68R10, 62G20}
% 	49J55  	Problems involving randomness [See also 93E20]
% 	49J45  	Methods involving semicontinuity and convergence; relaxation
% 	68R10  		Discrete mathematics in relation to computer science: Graph theory
% 	62G20  		Nonparametric inference: Asymptotic properties
%     60D05     Geometric probability and stochastic geometry

\newcounter{broj1}
\date{\today}
\maketitle

\begin{abstract}
We consider point clouds obtained as random samples of a  measure on a Euclidean domain. A graph representing the point cloud is obtained by assigning weights to edges based on the distance between the points they connect. 
Our goal is to develop mathematical tools needed to study the consistency, as the number of available data points increases, of graph-based machine learning algorithms for tasks such as clustering.
%We develop mathematical tools for answering when do the minimizers of graph-based functionals describing tasks such as clustering  converge, as the number of data points increases, to a minimizer of a limiting functional. 
In particular, we study when is the cut capacity, and more generally total variation, on these graphs a good approximation of the perimeter (total variation) in the continuum setting.
%, as the number of data points tends to infinity.  
We address this question in the setting of $\Gamma$-convergence.
We obtain almost optimal conditions on the scaling, as number of points increases, of the size of the neighborhood over which the points are connected by an edge for the $\Gamma$-convergence to hold.
 Taking the limit is enabled by a transportation based metric which allows to suitably compare functionals defined on different point clouds.
\end{abstract}

\section{Introduction}
Our goal is to develop mathematical tools to rigorously study limits of variational problems defined on random samples of a measure, as the number of data points goes to infinity. 
The main application is to establishing  consistency of machine learning algorithms for tasks such as clustering and classification. These tasks are of fundamental importance for statistical analysis of randomly sampled data, yet few results on their consistency are available. In particular it is largely open to determine when do the minimizers of graph-based tasks converge, as the number of available data increases, to a minimizer of a limiting functional in the continuum setting. Here we introduce the mathematical setup needed to address such questions.

To analyze the structure of a data cloud one defines a weighted graph to represent it. 
Points become vertices and are connected by edges if sufficiently close. 
The edges are assigned  weights based on the distances between points. 
How the graph is constructed is important:
for lower computational  complexity one seeks to have fewer  edges, 
but below some threshold the graph no longer contains the desired information on the geometry of the point cloud.
The machine learning tasks, such as classification and clustering, can often be 
given in terms of minimizing a functional on the graph representing the point cloud.
Some of the fundamental approaches are based on minimizing graph cuts (graph perimeter) and related functionals (normalized cut, ratio cut, balanced cut), and more generally
 total variation on graphs 
\cite{ARV,BertozziFlenner,BVZ,bresson2012,bluv13,Thomas1,bresson2012multi,HeinBuhl,HeinSetz,MerKosBer,RanHein,ShiMalik,szlam2009total,SzlamBresson}. We focus on total variation on graphs (of which graph cuts are a special case).
 The techniques we introduce are applicable to rather broad range of functionals, in particular those where total variation is combined with lower-order terms, or those where total variation is replaced by Dirichlet energy.

The graph perimeter (a.k.a. cut size, cut capacity) of a set of vertices is the sum of the weights of edges between the set and its complement. Our goal is to understand for what constructions of graphs from data is the cut capacity a good notion of a perimeter. 
We pose this question in terms of consistency as the number of data points increases: $n \to \infty$.
We assume that the data points are random independent samples of an underlying measure $\nu$
with density $\rho$ supported in a set $D$ in $\R^d$.
 The question is if the graph perimeter on the point cloud is a good approximation of the perimeter on  $D$ (weighted by $\rho^2$).
 Since machine learning tasks involve minimizing appropriate functionals on graphs, the 
 most relevant question is if the minimizers of functionals on graphs involving graph cuts converge to minimizers of corresponding limiting functionals in continuum setting, as $n \to \infty$. 
Such convergence is implied by the variational notion of convergence called the $\Gamma$-convergence, which we focus on. The notion of $\Gamma$-convergence has been used extensively in the calculus of variations, in particular in homogenization theory, phase transitions, image processing, and material science. We show how the $\Gamma$-convergence can be applied to establishing consistency of data-analysis algorithms.

\subsection{Setting and the main results} Consider a point cloud $V= \{X_1, \dots, X_n\}$.
Let $\eta$ be a kernel, that is, let $\eta : \R^d \to [0, \infty)$ be a radially symmetric, radially decreasing,   function decaying to zero sufficiently fast. 
Typically the kernel is appropriately rescaled to take into account data density. 
In particular, let $\eta_\veps$ depend on a length scale $\veps$ so that significant weight is given to edges connecting points up to distance $\veps$. 
We assign for $i,j \in \{1, \dots, n \}$ the weights by
\begin{equation} \label{edgew}
 W_{i,j} = \eta_\veps(X_i - X_j) 
\end{equation}
and define the graph perimeter of $A \subset V$ to be
\begin{equation} \GPer(A) = 2 \sum_{X_i \in A} \sum_{X_j \in V \backslash A} W_{i,j}.
\label{Perimeter}
\end{equation}
The graph perimeter (i.e. cut size, cut capacity),  can be effectively used as a term in functionals which give a variational description 
to classification and clustering   
\cite{BertozziFlenner,bresson2012,BVZ,Thomas1,bresson2012multi,bresson2013adaptive,bluv13,HeinBuhl,HeinSetz,MerKosBer,RanHein,szlam2009total,SzlamBresson}. 

The total variation of a function $u$ defined on the point cloud is typically given as 
\begin{equation}\label{TVgraph1}
\sum_{i,j}W_{i,j}|u(X_i)- u(X_j) |. 
\end{equation}
We note that the total variation is a generalization of perimeter since the perimeter of a set of vertices $A\subset V$  is the total variation of the characteristic function of $A$.
%The factor $2$ in the definition of perimeter in included   so that the total variation of the characteristic function of a set, defined in \eqref{TVgraph1}), coincides with the perimeter of the set.

In this paper we focus on point clouds that are obtained as samples from a given distribution $\nu$. Specifically, consider an open, bounded, and connected set $D \subset \R^d$ with Lipschitz boundary
and a probability measure $\nu$ supported on $\overline D$. Suppose that $\nu$ has density $\rho$, which is
continuous and  bounded above and below by positive constants on $D$.
Assume $n$ data points $X_1, \dots , X_n$ (i.i.d. random points) are chosen according to the distribution $\nu$. We consider a graph with vertices $V=\{X_1, \dots, X_n\}$ and edge weights $W_{i,j}$ given by \eqref{edgew}, where $\eta_\veps$ to be defined by $\eta_\veps(z) := \frac{1}{\veps^d}\eta\left(\frac{z}{\veps} \right)$. Note that significant weight is given to edges connecting points up to distance of order $\veps$.

Having limits as $n \to \infty$ in mind, we define the  \emph{graph total variation} to be a rescaled  form of \eqref{TVgraph1}:
\begin{equation} \label{tTV}
\tTV_{n , \veps}(u):= \frac{1}{\veps}\frac{1}{n^2}\sum_{i,j}W_{i,j}|u(X_i)- u(X_j) |.  
\end{equation}
For a given scaling of $\veps$ with respect to $n$, 
we study the limiting behavior of $\tTV_{n , \veps(n)}$ as the number of points $n \rightarrow \infty$.
The limit is considered in the variational sense of $\Gamma$-convergence. 

A key contribution of our work is in identifying the proper topology with respect to which the $\Gamma$-convergence takes place. As one is considering functions supported on the graphs, the issue is how to compare 
them with functions in the continuum setting, and how to compare functions defined on different graphs. Let us denote by $\nu_n$ the empirical measure associated to the $n$ data points:
\begin{equation} \label{empirical}
\nu_n:=\frac{1}{n}\sum_{i=1}^{n}\delta_{X_i}.
\end{equation}
The issue is then how to compare functions in $L^1(\nu_n)$ with those in $L^1(\nu)$. 
More generally we consider how to compare functions in
$L^p(\mu)$ with those in $L^p(\theta)$ for arbitrary probability measures $\mu$, $\theta$ on $D$ and arbitrary $p \in [1,\infty)$.
We set
\[ TL^p(D) := \{ (\mu, f) \; : \:  \mu \in \mathcal P(D), \, f \in L^p(D, \mu) \}, \]
where $\mathcal{P}(D)$ denotes the set of Borel probability measures on $D$.
For $(\mu,f)$ and $(\nu,g)$ in $TL^p$ we define the distance
  \begin{align*} 
d_{TL^p}((\mu,f), (\nu,g)) =
   \inf_{\pi \in \Gamma(\mu, \nu)} \left(\iint_{D \times D} |x-y|^p + |f(x)-g(y)|^p  d\pi(x,y) \right)^{\frac{1}{p}}
\end{align*}
where $\Gamma(\mu, \theta)$ is the set of all {\em couplings} (or \emph{transportation plans})  between $\mu$ and $\theta$, that is, the set of all  Borel probability measures on $D \times D$ for which the marginal on the first variable is $\mu$ and the marginal on the second variable is $\theta$. As discussed in Section \ref{TLp}, $d_{TL^p}$ is a transportation distance between graphs of functions.

The $TL^p$ topology provides  a  general and versatile way to compare functions in a discrete setting with functions in a continuum setting. It is a generalization of the weak convergence of measures and of $L^p$ convergence of functions. By this we mean that $\left\{ \mu_n \right\}_{n \in \N}$ in $\mathcal{P}(D)$ converges weakly  to $\mu\in \mathcal{P}(D)$ if and only if $ \left(\mu_n , 1 \right) \overset{{TL^p}}{\longrightarrow} (\mu, 1)$ as $n \rightarrow \infty$,  and that for $\mu \in \mathcal{P}(D)$ a sequence $\left\{ f_n \right\}_{n\in \N}$ in $L^p(\mu)$ converges in $L^p(\mu)$ to $f$ if and only if $(\mu, f_n) \converges{TL^p} (\mu,f)$ as $n \rightarrow \infty$. The fact is established
in Proposition \ref{EquivalenceTLp}.

Furthermore if one considers functions defined on a regular grid, then the standard way \cite{Chambolle,Yip}, to compare them is to identify them with piecewise constant functions, whose value on the grid cells is equal to the value at the appropriate grid point, and then compare the extended functions using the $L^p$ metric.  $TL^p$ metric restricted to regular grids gives the same topology.
\medskip

The kernels $\eta$ we consider are  assumed to be isotropic, and thus can be defined as $\eta(x) := \bm{\eta}(|x|)$ where  $\bm{\eta}: [0, \infty) \rightarrow [0, \infty)$ is the radial profile.  We assume:
\begin{itemize}
\addtolength{\itemsep}{2pt}
% \addtolength{\leftmargin}{60pt}
\item[\textbf{(K1)}] $\bm{\eta}(0)>0$ and $\bm{\eta}$ is continuous at $0$.  
\item[\textbf{(K2)}] $\bm{\eta}$ is non-increasing.
\item[\textbf{(K3)}] The integral $\int_{0}^{\infty} \bm{\eta}(r) \, r^d dr $ is finite.
\end{itemize}
We note that the class of admissible kernels is  broad and includes
both Gaussian kernels and discontinuous kernels like one defined by
 $\bm{\eta}$ of the form $\bm{\eta}=1$ for $r \leq 1$ and $\bm{\eta}=0$ for $r>1$.  
 We remark that the assumption (K3) is equivalent to imposing that the surface tension 
 \begin{equation} \label{sigma_eta}
 \sigma_\eta = \int_{\R^d} \eta(h)|h_1|dh,
\end{equation}
where $h_1$ is the first coordinate of vector $h$, is finite and also that one can replace $h_1$ in the above expression by $h \cdot e$ for any fixed $e \in \R^d$ with norm one; this, given that $\eta$ is radially symmetric.
 \medskip
 
 The weighted total variation in continuum setting (with weight $\rho^2$), $TV(\cdot,\rho^2): L^1(D, \nu) \to [0, \infty]$, is given by
\begin{equation}
\label{cTV}
TV(u; \rho^2) = \sup \left\{  \int_{D} u \divergence(\phi)dx \: : \:   | \phi(x)| \leq \rho^2(x)\: \: \forall x \in D \: , \:  \phi \in C^\infty_c(D, \R^d)   \right\}
\end{equation}
if the right-hand side is finite and is set to equal infinity otherwise. Here and in the rest of the paper we use $| \cdot |$ to denote the euclidean norm in $\R^d$. Note that if $u$ is smooth enough then the weighted total variation can be written as $TV(u; \rho^2) = \int_D |\nabla u| \rho^2(x) dx$.

The main result of the paper is:
\begin{theorem}[$\Gamma$-convergence]
Let $D \subset \R^d$, $d \geq 2$ be an open, bounded, connected set with Lipschitz boundary. Let $\nu$ be a probability measure on $D$ with continuous density $\rho$, which is
 bounded from below and above by positive constants. 
 Let $X_1, \dots, X_n, \dots$ be a sequence of i.i.d. random points chosen according to distribution $\nu$ on $D$. Let $\left\{ \veps_n \right\}_{n \in \N}$ be a sequence of positive numbers converging to $0$ and satisfying
\begin{align} \label{HypothesisEpsilon}
\begin{split}
%\lim_{n \rightarrow \infty} \frac{\sqrt{\log(\log n)}}{ n^{1/2}} \frac{1}{\veps_n}& =0 \:\:  if \: d=1, \\
\lim_{n \rightarrow \infty} \frac{(\log n)^{3/4}}{ n^{1/2} } \frac{1}{\veps_n}& =0 \:\quad  \te{if } \: d=2, \\
\lim_{n \rightarrow \infty} \frac{(\log n)^{1/d}}{ n^{1/d} } \frac{1}{\veps_n}& =0 \:\quad  \te{if } \: d\geq3.
\end{split}
\end{align}
Assume the kernel $\eta$ satisfies conditions (K1)-(K3).
Then, $ \tTV_{n,\veps_n}$, defined by \eqref{tTV}, $\Gamma$-converge to $\sigma_\eta TV(\cdot, \rho^2)$ as $n \rightarrow \infty$ in the $TL^1$ sense, where $\sigma_\eta$ is given by \eqref{sigma_eta}
 and $TV(\cdot, \rho^2)$ is the weighted total variation functional defined in \eqref{cTV}.
 \label{DiscreteGamma}
\end{theorem}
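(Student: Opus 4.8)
I would prove the two halves of $\Gamma$-convergence — the liminf inequality and the existence of recovery sequences — in the $TL^1$ topology, bridging the discrete functionals to the local limit through the \emph{nonlocal continuum} functional
\[
TV_\veps(u;\rho) := \frac{1}{\veps}\iint_{D\times D}\eta_\veps(x-y)\,|u(x)-u(y)|\,\rho(x)\,\rho(y)\,dx\,dy, \qquad u\in L^1(D).
\]
Three ingredients would be established first. (1) A \emph{deterministic} nonlocal-to-local $\Gamma$-convergence, $TV_\veps(\cdot;\rho)\xrightarrow{\Gamma}\sigma_\eta\,TV(\cdot;\rho^2)$ in $L^1(D)$ as $\veps\to0$, with $\sigma_\eta$ as in \eqref{sigma_eta}: the upper bound is immediate for $u\in C^\infty(\overline D)$ by the change of variables $y=x+\veps h$ and the remark on $\sigma_\eta$, and extends by strict $BV$-approximation, while the lower bound comes from a slicing argument along lines after localizing on small balls where $\rho$ is essentially constant. (2) A \emph{probabilistic} input: by the known rate of convergence of the empirical measure $\nu_n$ to $\nu$ in the $\infty$-transportation distance, almost surely there exist, for $n$ large, transport maps $T_n\colon D\to D$ with $T_{n\#}\nu=\nu_n$ and $\|\id-T_n\|_{L^\infty(\nu)}\le\delta_n$, where $\delta_n = C(\log n)^{3/4}n^{-1/2}$ if $d=2$ and $\delta_n = C(\log n)^{1/d}n^{-1/d}$ if $d\geq3$; hypothesis \eqref{HypothesisEpsilon} is exactly $\delta_n/\veps_n\to0$. (3) The $TL^1$ bookkeeping (Proposition~\ref{EquivalenceTLp} and the accompanying discussion of transport maps): $(\nu_n,u_n)\converges{TL^1}(\nu,u)$ is equivalent to $\tilde u_n:=u_n\circ T_n\to u$ in $L^1(\nu)$.

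\textbf{From discrete to nonlocal.} Pushing forward by $T_n$,
\[
\tTV_{n,\veps_n}(u_n)=\frac{1}{\veps_n}\iint_{D\times D}\eta_{\veps_n}(T_nx-T_ny)\,|\tilde u_n(x)-\tilde u_n(y)|\,d\nu(x)\,d\nu(y),
\]
and since $\big|\,|T_nx-T_ny|-|x-y|\,\big|\le 2\delta_n$, monotonicity of $\bm{\eta}$ yields, for every $\theta\in(0,1)$ and every pair with $|x-y|\ge 2\delta_n/\theta$, the comparison
\[
(1-\theta)^d\,\eta_{(1-\theta)\veps_n}(x-y)\ \le\ \eta_{\veps_n}(T_nx-T_ny)\ \le\ (1+\theta)^d\,\eta_{(1+\theta)\veps_n}(x-y).
\]
For the liminf: given $(\nu_n,u_n)\converges{TL^1}(\nu,u)$ with $\liminf\tTV_{n,\veps_n}(u_n)<\infty$, discard the nonnegative contribution of $\{|x-y|<2\delta_n/\theta\}$ and use the left comparison to get $\tTV_{n,\veps_n}(u_n)\ge(1-\theta)^{d+1}\,TV^{\theta}_{(1-\theta)\veps_n}(\tilde u_n;\rho)$, where $TV^{\theta}_\veps$ uses $\bm{\eta}$ truncated to vanish near $0$ (this does not change the surface tension in the limit, since $2\delta_n/((1-\theta)\veps_n)\to0$). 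Applying the liminf half of (1) at scale $(1-\theta)\veps_n$ to $\tilde u_n\to u$, then letting $\theta\to0$, gives $\liminf_n\tTV_{n,\veps_n}(u_n)\ge\sigma_\eta\,TV(u;\rho^2)$.

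\textbf{Recovery sequences.} Fix $(\nu,u)\in TL^1$ with $TV(u;\rho^2)<\infty$. By strict $BV$-approximation choose $v^{(k)}\in C^\infty(\overline D)$ with $v^{(k)}\to u$ in $L^1(\nu)$ and $TV(v^{(k)};\rho^2)\to TV(u;\rho^2)$. For fixed $k$, set $u_n^{(k)}:=v^{(k)}|_V$; since $T_n\to\id$ uniformly and $v^{(k)}$ is continuous, $u_n^{(k)}\circ T_n=v^{(k)}\circ T_n\to v^{(k)}$ uniformly, so $(\nu_n,u_n^{(k)})\converges{TL^1}(\nu,v^{(k)})$. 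Using the right comparison on $\{|x-y|\ge 2\delta_n/\theta\}$, and on the complementary region the Lipschitz bound $|v^{(k)}(T_nx)-v^{(k)}(T_ny)|\le\|\nabla v^{(k)}\|_\infty(|x-y|+2\delta_n)$ together with $\eta_{\veps_n}\le\bm{\eta}(0)\,\veps_n^{-d}$ and $(\nu\otimes\nu)(\{|x-y|<r\})\le Cr^d$ (so that region contributes $O(\delta_n^{d+1}/\veps_n^{d+1})=o(1)$), one obtains $\limsup_n\tTV_{n,\veps_n}(u_n^{(k)})\le(1+\theta)^{d+1}\limsup_n TV_{(1+\theta)\veps_n}(v^{(k)};\rho)=(1+\theta)^{d+1}\sigma_\eta\,TV(v^{(k)};\rho^2)$, using the elementary pointwise limit $TV_\veps(v^{(k)};\rho)\to\sigma_\eta TV(v^{(k)};\rho^2)$; letting $\theta\to0$ and then diagonalizing in $k$ (choosing $k(n)\to\infty$ slowly, possible since $TL^1$ is metrizable) produces $u_n:=u_n^{(k(n))}$ with $(\nu_n,u_n)\converges{TL^1}(\nu,u)$ and $\limsup_n\tTV_{n,\veps_n}(u_n)\le\sigma_\eta\,TV(u;\rho^2)$.

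\textbf{Expected main obstacle.} The hardest part will be the discrete-to-nonlocal passage: it rests on the sharp $\infty$-transportation estimate between $\nu_n$ and $\nu$ — it is this rate that makes \eqref{HypothesisEpsilon} the natural, essentially optimal hypothesis and that accounts for the extra logarithmic factors when $d=2$ — and on controlling the near-diagonal region $\{|x-y|\lesssim\delta_n\}$, where the transport perturbation is comparable to the relevant length scale and, for discontinuous kernels, one cannot compare $\eta_{\veps_n}(T_n\cdot-T_n\cdot)$ with $\eta_{\veps_n}(\cdot-\cdot)$ directly but only with the rescaled kernels $\eta_{(1\pm\theta)\veps_n}$. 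The deterministic nonlocal-to-local $\Gamma$-convergence, especially its liminf inequality, is the other genuinely nontrivial component, though it is in the spirit of classical nonlocal-approximation results.
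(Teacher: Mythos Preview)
Your proposal is correct and follows essentially the same architecture as the paper: the intermediate nonlocal functional $TV_\veps(\cdot;\rho)$, the $\infty$-transportation maps $T_n$ with $\|Id-T_n\|_\infty=o(\veps_n)$ supplied by hypothesis \eqref{HypothesisEpsilon}, the change-of-variables identity for $\tTV_{n,\veps_n}(u_n)$ in terms of $\tilde u_n=u_n\circ T_n$, and smooth/Lipschitz density for the recovery sequence.

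The one tactical difference worth noting is in the kernel comparison. You compare $\eta_{\veps_n}(T_nx-T_ny)$ with $(1\pm\theta)^d\eta_{(1\pm\theta)\veps_n}(x-y)$ for general nonincreasing $\bm\eta$, at the price of splitting off and separately controlling the near-diagonal strip $\{|x-y|\lesssim\delta_n\}$ (which you do correctly). The paper instead first treats step kernels $\bm\eta=a\chi_{[0,b)}$, for which the comparison is exact with the shifted scale $\tilde\veps_n=\veps_n\mp\frac{2}{b}\|Id-T_n\|_\infty$ and no near-diagonal excision is needed, and then builds up to general $\bm\eta$ by monotone approximation from below and above by piecewise-constant profiles. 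Your route is slightly more direct for a general kernel; the paper's route avoids the auxiliary parameter $\theta$ and the truncated-kernel liminf (which, as you implicitly use, still holds since the liminf half of the nonlocal result does not require (K1)). Either way the same three ingredients carry the proof.
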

The notion of $\Gamma$-convergence  in deterministic setting is recalled in Subsection \ref{sec:gamma}, where we also extend it to the probabilistic setting in Definition \ref{def:Gammarand}. 
The fact that the density in the limit is $\rho^2$ essentially follows from the fact that graph total variation is a double sum (and becomes more apparent in Section \ref{GammaConv} when we write  the graph total variation in form \eqref{RepTvGraph}).

The following compactness result shows that the $TL^1$ topology is indeed a good topology for the $\Gamma$-convergence (in the light of Proposition \ref{comp_gen}). 
\begin{theorem}[Compactness]
Under the assumptions of the theorem above, consider a sequence of functions $u_n \in L^1(D, \nu_{n})$,
where $\nu_n$ is given by \eqref{empirical}.
If $\{ u_n \}_{n \in \N}$ have uniformly bounded $L^1(D, \nu_{n})$ norms and graph total variations, 
$\tTV_{n,\veps_n}$, then the sequence is  relatively compact in $TL^1$. More precisely if 
\begin{equation*}
\sup_{n \in \N}  \|u_{n}\|_{L^1(D, \nu_{n})} < \infty,
\end{equation*}
and
\begin{equation*}
\sup_{n \in \N}  \tTV_{n , \veps_n}(u_{n}) < \infty,
\end{equation*}
then $\{ u_n \}_{n \in N}$ is $TL^1$-relatively compact. 
\label{compact}
\end{theorem}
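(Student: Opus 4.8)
The plan is to transfer the statement, by means of transportation maps between $\nu$ and the empirical measure $\nu_n$, to a classical compactness result for \emph{non-local} total variation functionals on the fixed domain $D$, and then to run a Fr\'echet--Kolmogorov argument. Hypothesis \eqref{HypothesisEpsilon} enters exactly once: it is the threshold guaranteeing that, almost surely, for all $n$ large there exist Borel maps $T_n\colon D\to\{X_1,\dots,X_n\}$ with $(T_n)_\#\nu=\nu_n$ and $\|\id-T_n\|_{L^\infty(D,\nu)}=:\delta_n$ satisfying $\delta_n/\veps_n\to 0$ (i.e.\ the $\infty$-transportation distance between $\nu_n$ and $\nu$ is $o(\veps_n)$). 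We work on this full-probability event, and set $\tilde u_n:=u_n\circ T_n\in L^1(D,\nu)$. Since $\|\id-T_n\|_{L^\infty}\to 0$, it is a standard property of the $TL^1$ metric (Section~\ref{TLp}, Proposition~\ref{EquivalenceTLp}) that $(\nu_n,u_n)$ converges in $TL^1$ to some $(\nu,f)$ if and only if $\tilde u_n\to f$ in $L^1(D,\nu)$; hence $\{u_n\}_n$ is $TL^1$-relatively compact if and only if $\{\tilde u_n\}_n$ is relatively compact in $L^1(D,\nu)$, and, as $\rho$ is bounded above and below by positive constants, if and only if $\{\tilde u_n\}_n$ is relatively compact in $L^1(D)$ for Lebesgue measure. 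A change of variables gives $\|\tilde u_n\|_{L^1(D)}\le c^{-1}\|u_n\|_{L^1(D,\nu_n)}$, so the hypothesis on the norms transfers.

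Next I would convert the bound on $\tTV_{n,\veps_n}(u_n)$ into a non-local energy bound for $\tilde u_n$. Pushing the double sum forward through $T_n$,
\[
\tTV_{n,\veps_n}(u_n)=\frac1{\veps_n}\iint_{D\times D}\eta_{\veps_n}\!\big(T_n(x)-T_n(y)\big)\,\big|\tilde u_n(x)-\tilde u_n(y)\big|\,d\nu(x)\,d\nu(y).
\]
By (K1) choose $a,\alpha>0$ with $\bm\eta\ge\alpha$ on $[0,a]$. Using $|T_n(x)-T_n(y)|\le|x-y|+2\delta_n$, the monotonicity (K2), and $\rho\ge c$, for $n$ large enough that $2\delta_n\le a\veps_n/2$ we get $\eta_{\veps_n}(T_n(x)-T_n(y))\ge\alpha\,\veps_n^{-d}$ whenever $|x-y|\le a\veps_n/2$, whence
\[
\tTV_{n,\veps_n}(u_n)\;\ge\;\alpha c^2\,\Lambda_n(\tilde u_n),\qquad \Lambda_n(v):=\frac1{\veps_n^{\,d+1}}\iint_{\substack{x,y\in D\\|x-y|\le a\veps_n/2}}|v(x)-v(y)|\,dx\,dy,
\]
so that $\sup_n\Lambda_n(\tilde u_n)<\infty$.

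It remains to prove the non-local compactness statement: if $\sup_n\|v_n\|_{L^1(D)}<\infty$ and $\sup_n\Lambda_n(v_n)<\infty$ with $\veps_n\to 0$, then $\{v_n\}$ is relatively compact in $L^1(D)$; applied to $v_n=\tilde u_n$ and combined with the reduction above, this proves the theorem. I would verify the Fr\'echet--Kolmogorov criterion. Writing $g_n(z):=\int_{D\cap(D-z)}|v_n(x+z)-v_n(x)|\,dx$, the bound $\Lambda_n(v_n)\le M$ says that the average of $g_n$ over $\{|z|\le a\veps_n/2\}$ is at most $CM\veps_n$. Since the zero-extension $w_n$ of $v_n$ to $\R^d$ has the property that $z\mapsto\|w_n(\cdot+z)-w_n\|_{L^1(\R^d)}$ is non-negative and subadditive and dominates $g_n$, one can (i) bound $\sup_{|z|\le a\veps_n/4}g_n(z)$ by a dimensional multiple of that average, and (ii) bootstrap to general small $z$ by writing $z$ as a sum of $\lceil 4|z|/(a\veps_n)\rceil$ vectors of length at most $a\veps_n/4$; together these give $g_n(z)\le CM(\veps_n+|z|)$ for all small $z$, uniformly in $n$. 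As $\veps_n\to 0$, this furnishes a single modulus $\omega$ with $\omega(t)\to0$ as $t\to0$ and $g_n(z)\le\omega(|z|)$ for every $n$ (the finitely many $n$ for which the previous paragraph does not yet apply being handled individually). Combined with equi-integrability of $\{v_n\}$ on boundary collars $\{\dist(\cdot,\partial D)<t\}$ as $t\to0$ — which again follows from the $\Lambda_n$-bound, via a Poincar\'e-type estimate in the collar exploiting the Lipschitz regularity of $\partial D$ — the Fr\'echet--Kolmogorov criterion yields relative compactness of $\{v_n\}$ in $L^1(D)$. The main obstacle is this last step: extracting a modulus of translation-continuity \emph{uniform in $n$} from an energy bound imposed only at the shrinking scale $\veps_n$ (this is the role of the subadditivity argument), and controlling mass near $\partial D$ (the role of the Lipschitz-boundary hypothesis). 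The transportation estimate of the first paragraph — the only probabilistic ingredient — is external input and is exactly what \eqref{HypothesisEpsilon} is designed to provide.
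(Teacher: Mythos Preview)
Your reduction to the continuum via transportation maps is exactly what the paper does: push $\tTV_{n,\veps_n}(u_n)$ forward through $T_n$, use $\|T_n-\id\|_\infty/\veps_n\to 0$ together with (K1)--(K2) to dominate from below by a non-local energy $TV_{\tilde\veps_n}(\tilde u_n;\rho)$ on $D$, and then invoke a continuum compactness result (Proposition~\ref{CompacCont}). Where you diverge from the paper is in the proof of that continuum compactness: the paper does \emph{not} run Fr\'echet--Kolmogorov directly on $D$, but first bi-Lipschitz maps $D$ to a $C^2$ domain and then \emph{extends $\tilde u_n$ across $\partial D$ by reflection} (Lemma~\ref{LemmaCompactnessBall}), so that the non-local energy of the extended function on all of $\R^d$ is controlled by the energy on $D$ plus $\|\tilde u_n\|_{L^1}$; only then does it appeal to Alberti--Bellettini's argument on $\R^d$, where boundary effects are absent.

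Your direct Fr\'echet--Kolmogorov sketch has a circularity at the boundary. The subadditivity trick you invoke is valid for $h_n(z):=\|w_n(\cdot+z)-w_n\|_{L^1(\R^d)}$ (zero extension), and gives $\sup_{|z|\le r/2}h_n(z)\le C\,\mathrm{avg}_{B_r}h_n$. But the energy bound you have is on $\mathrm{avg}_{B_r}g_n$, with $g_n(z)=\int_{D\cap(D-z)}|v_n(x+z)-v_n(x)|\,dx$, and the gap $h_n-g_n$ is precisely the $L^1$-mass of $v_n$ in a $|z|$-collar of $\partial D$. So step (i) as written does not follow: subadditivity of $h_n$ plus $h_n\ge g_n$ does not yield $\sup g_n\lesssim\mathrm{avg}\,g_n$. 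You then say that the missing collar equi-integrability ``again follows from the $\Lambda_n$-bound, via a Poincar\'e-type estimate'' --- but that is exactly the nontrivial content you need for (i), so the argument is circular. The bootstrap in (ii) has the same issue: iterating the subadditivity of $h_n$ along a chain $z=z_1+\cdots+z_k$ requires pointwise bounds on $h_n(z_i)$, not $g_n(z_i)$. This is why the paper builds the reflection extension: it converts the $\Lambda_n$-bound on $D$ into a genuine bound on $\mathrm{avg}_{B_r}h_n$ on $\R^d$ (up to a controlled $\|v_n\|_{L^1}$ term), after which the subadditivity/iteration argument you outline becomes legitimate. If you want to avoid reflection, you must actually prove the collar estimate; that is possible, but it is the substantive step and cannot be waved away.
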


When $A_n $ is a subset of  $\left\{X_1, \dots , X_n \right\}$, it holds that $\tTV_{n,\veps_n}(\chi_{A_n}) = \frac{1}{n^2\epsilon_n}\GPer(A_n) $, where $\GPer(A_n)$ was defined in \eqref{Perimeter}. 
%On the continuum level the weighted total variation of the characteristic function of  $A \subseteq D$ is equal to the weighted perimeter of $A$. 
The proof of Theorem \ref{DiscreteGamma} allows us to show the variational convergence of the perimeter on  graphs to the weighted perimeter in domain $D$, defined by
$\Per(E : D, \rho^2) = TV(\chi_E, \rho^2)$.
\begin{corollary}[$\Gamma$-convergence of perimeter]
Under the hypothesis of Theorem \ref{DiscreteGamma} the conclusions hold when all of 
the functionals are restricted to characteristic functions of sets. That is, the (scaled) graph perimeters $\Gamma$-converge to the continuum (weighted) perimeter $\Per(\tacka : D, \rho^2)$.
\label{GammaPrimeter}
\end{corollary}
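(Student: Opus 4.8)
The plan is to deduce the corollary from Theorem~\ref{DiscreteGamma} by checking that the $\Gamma$-convergence survives the restriction of all the functionals to characteristic functions. Introduce the functional $P_{n}$ equal to $\tTV_{n,\veps_{n}}(v)$ when $v=\chi_{A}$ for some $A\subseteq\{X_{1},\dots,X_{n}\}$ and $+\infty$ otherwise, and the functional $P$ equal to $\sigma_{\eta}\Per(E:D,\rho^{2})=\sigma_{\eta}TV(\chi_{E},\rho^{2})$ when $v=\chi_{E}$ and $+\infty$ otherwise. Two things must be verified: the $\liminf$ inequality, which I expect to inherit from Theorem~\ref{DiscreteGamma} once I know that a $TL^{1}$-limit of characteristic functions is again a characteristic function; and the recovery ($\limsup$) inequality, for which the recovery sequence provided by Theorem~\ref{DiscreteGamma} must be upgraded to one consisting of characteristic functions.

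For the $\liminf$ inequality, suppose $(\nu_{n},\chi_{A_{n}})\converges{TL^1}(\nu,u)$. I would first argue that $u$ equals $\chi_{E}$ ($\nu$-a.e.) with $E:=\{u=1\}$: the map $s\mapsto\min(s,1-s)$ is $1$-Lipschitz, so $d_{TL^{1}}\big((\nu_{n},\min(\chi_{A_{n}},1-\chi_{A_{n}})),(\nu,\min(u,1-u))\big)\le d_{TL^{1}}\big((\nu_{n},\chi_{A_{n}}),(\nu,u)\big)\to 0$; since $\min(\chi_{A_{n}},1-\chi_{A_{n}})\equiv 0$ and $(\nu_{n},0)\converges{TL^1}(\nu,0)$ by Proposition~\ref{EquivalenceTLp} (using that $\nu_{n}$ converges weakly to $\nu$), uniqueness of limits in the metric space $TL^{1}$ forces $\min(u,1-u)=0$ $\nu$-a.e. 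Granting this, applying the $\liminf$ part of Theorem~\ref{DiscreteGamma} directly to the sequence $\chi_{A_{n}}$ gives $\liminf_{n}\tTV_{n,\veps_{n}}(\chi_{A_{n}})\ge\sigma_{\eta}TV(\chi_{E},\rho^{2})=\sigma_{\eta}\Per(E:D,\rho^{2})=P(u)$.

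For the recovery inequality, fix $E\subseteq D$; the only substantive case is $\Per(E:D,\rho^{2})=TV(\chi_{E},\rho^{2})<\infty$ (when the perimeter is infinite it suffices to exhibit any $TL^{1}$-convergent sequence of characteristic functions, which the rounding below produces with the energy bound vacuous, after approximating $\chi_{E}$ in $L^{1}(\nu)$ by characteristic functions of finite-perimeter sets and diagonalizing). Let $u_{n}$ be the recovery sequence for $\chi_{E}$ supplied by Theorem~\ref{DiscreteGamma}, so $(\nu_{n},u_{n})\converges{TL^1}(\nu,\chi_{E})$ and $\limsup_{n}\tTV_{n,\veps_{n}}(u_{n})\le\sigma_{\eta}TV(\chi_{E},\rho^{2})=:L$. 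I would replace $u_{n}$ by its truncation into $[0,1]$, which does not increase $\tTV_{n,\veps_{n}}$ and preserves $TL^{1}$-convergence to $\chi_{E}$ because $s\mapsto\max(0,\min(1,s))$ is the $1$-Lipschitz projection onto $[0,1]$ and fixes $\{0,1\}$, so I may assume $0\le u_{n}\le 1$. The engine is then the coarea formula for graph total variation, $\tTV_{n,\veps_{n}}(u_{n})=\int_{0}^{1}\tTV_{n,\veps_{n}}(\chi_{\{u_{n}>t\}})\,dt$, obtained by plugging $|a-b|=\int_{0}^{1}|\chi_{\{a>t\}}-\chi_{\{b>t\}}|\,dt$ ($a,b\in[0,1]$) into \eqref{tTV}. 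Choosing $\pi_{n}\in\Gamma(\nu_{n},\nu)$ with $\iint(|x-y|+|u_{n}(x)-\chi_{E}(y)|)\,d\pi_{n}=:\delta_{n}\to 0$ and integrating the same layer-cake identity in $t$ (noting $\chi_{\{\chi_{E}(y)>t\}}=\chi_{E}(y)$ for $t\in(0,1)$) shows that the $t$-average over $(0,1)$ of $\iint(|x-y|+|\chi_{\{u_{n}(x)>t\}}-\chi_{E}(y)|)\,d\pi_{n}$ equals $\delta_{n}$. A Markov/averaging argument then yields a threshold $t_{n}$ with $\iint(|x-y|+|\chi_{\{u_{n}(x)>t_{n}\}}-\chi_{E}(y)|)\,d\pi_{n}\le\sqrt{\delta_{n}}$ and $\tTV_{n,\veps_{n}}(\chi_{\{u_{n}>t_{n}\}})\le\tTV_{n,\veps_{n}}(u_{n})/(1-\sqrt{\delta_{n}})$ simultaneously. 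Setting $A_{n}:=\{X_{i}:u_{n}(X_{i})>t_{n}\}$, the plan $\pi_{n}$ witnesses $(\nu_{n},\chi_{A_{n}})\converges{TL^1}(\nu,\chi_{E})$, and $\limsup_{n}\tTV_{n,\veps_{n}}(\chi_{A_{n}})\le L=P(\chi_{E})$.

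I expect the main obstacle to be precisely this last level-set selection: a \emph{single} threshold $t_{n}$ must give a slice that is at once $TL^{1}$-close to $\chi_{E}$ and has graph total variation at most $L+o(1)$, and this is what the Markov bound on the set of ``bad'' thresholds (of measure at most $\sqrt{\delta_{n}}$) together with averaging $\tTV_{n,\veps_{n}}(\chi_{\{u_{n}>t\}})$ over the complementary good set achieves. Everything else is a direct transcription of the proof of Theorem~\ref{DiscreteGamma} to the special case of characteristic functions, using that $\tTV_{n,\veps_{n}}(\chi_{A_{n}})=\frac{1}{n^{2}\veps_{n}}\GPer(A_{n})$.
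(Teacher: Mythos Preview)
Your proof is correct and follows the same route as the paper's: the liminf inequality is inherited directly from Theorem~\ref{DiscreteGamma} (with your extra care in showing the $TL^1$-limit of characteristic functions is again a characteristic function), and the recovery sequence is produced via the coarea formula for $\tTV_{n,\veps_n}$ applied to the recovery sequence from Theorem~\ref{DiscreteGamma}, followed by a threshold selection. The only difference is in how the threshold is chosen: the paper fixes $\delta\in(0,\tfrac12)$, averages over $s\in(\delta,1-\delta)$ to find $s_n$ with $\tTV_{n,\veps_n}(\chi_{\{u_n>s_n\}})\le\tfrac{1}{1-2\delta}\tTV_{n,\veps_n}(u_n)$, observes that $|\chi_{\{u_n>s\}}-\chi_A|\le\tfrac{1}{\delta}|u_n-\chi_A|$ for any such $s$ to get $TL^1$-convergence, and finally diagonalizes over $\delta\to 0$; your Markov argument handles the energy bound and the $TL^1$-closeness in one stroke and avoids the diagonalization, which is a minor streamlining of the same idea.
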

The proofs of the theorems and of the corollary are presented in Section 5.
We remark that the Corollary \ref{GammaPrimeter} is not an immediate consequence of Theorem \ref{DiscreteGamma},
since in general $\Gamma$-convergence may not carry over when a (closed) subspace of a metric space is considered. The proof of Corollary \ref{GammaPrimeter} is nevertheless straightforward. 

\begin{remark}
When one considers $\rho$ to be constant in Theorem \ref{DiscreteGamma}  the points $X_1, \dots,X_n$ are uniformly distributed on $D$. In this particular case, the theorem implies that the graph total variation converges to the usual total variation on $D$ (appropriately scaled by  $1/ \vol(D)^2$). Corollary \ref{GammaPrimeter} implies that the graph perimeter converges to the usual perimeter (appropriately scaled).
\end{remark}

\begin{remark}
The notion of $\Gamma$-convergence is different from the notion of pointwise convergence, but often the proof of $\Gamma$-convergence implies the  pointwise convergence. 
The pointwise convergence of the graph perimeter to continuum perimeter is the statement that for any set $A \subset D$ of finite perimeter, with probability one:
\[ \lim_{n \rightarrow \infty} \tTV_{n,\veps_n}(\chi_A) = \Per(A:D, \rho^2). \]
In the case that $D$ is smooth, the points $X_1, \dots, X_n$ are uniformly distributed on $D$ and $A$ is smooth, the pointwise convergence of the graph perimeter
 can be obtained from the results in \cite{MvLH12} and in \cite{Pelletier} when $\veps_n$ is converging to zero so that $\frac{(\log n)^{1/(d+1)}}{n^{1/(d+1)} } \frac{1}{\veps_n} \to 0$ as $n \rightarrow \infty$. In Remark \ref{DensityLimsuptTV} we point out that our proof of $\Gamma$-convergence implies that pointwise convergence also holds, with same scaling for $\veps_n$ as in Theorem \ref{DiscreteGamma}, which slightly improves the rate of pointwise convergence in \cite{Pelletier}. Note that pointwise convergence does not follow directly from the $\Gamma$-convergence. 
\end{remark}

\begin{remark} \label{connectivity}
Theorem \ref{compact} implies that the probability that the weighted graph, with vertices $X_1, \dots, X_n$ and edge weights  $W_{i,j}=\eta_{\veps_n}(X_i-X_j)$ is connected, converges to 1 as $n \to \infty$. 
Otherwise there is a sequence $n_k \nearrow \infty$ as $k \to \infty$ such that with positive probability, the graph above is not connected for all $k$. 
We can assume that $n_k=k$ for all $k$.
Consider a connected component $A_n \subset \{ X_1, \dots, X_n\}$ such that $\sharp A_n \leq n/2$.
Define function $u_n = \frac{n}{\sharp A_n} \chi_{A_n}$. Note that 
$\| u_n \|_{L^1(\nu_n)} = 1$ and that $\tTV_{n, \veps_n}(u_n) = 0$. 
By compactness,  along a subsequence (not relabeled), 
$u_n$ converges in $TL^1$ to a function $u \in L^1(\nu)$. Thus $\| u \|_{L^1(\nu)}=1$. By lower-semicontinuity which follows from $\Gamma$-convergence of Theorem \ref{DiscreteGamma} it follows that $TV(u) = 0$ and thus $u = 1$ on $D$. But since the values of $u_n$ are either $0$ or greater or equal to $2$, it is not possible that $u_n$ converges to $u$ in $TL^1$. This is a contradiction.
\end{remark}

\subsection{Optimal scaling of $\veps(n)$}
If $d \geq 3$ then the rate presented in \eqref{HypothesisEpsilon} is 
sharp in terms of scaling. To illustrate, 
suppose that the data points are uniformly distributed on $D$ and $\eta$ has compact support. It is known from graph theory (see \cite{Penrose1,Goel,GuptaKumar}) that there exists a constant $\lambda>0$ such that if $\veps_n < \lambda \frac{(\log n)^{1/d}}{n^{1/d}}$ then the weighted graph associated to $X_1, \dots, X_n$ is disconnected with high probability.
 Therefore, in the light of Remark \ref{connectivity}, the compactness property cannot hold if  $\veps_n < \lambda \frac{(\log n)^{1/d}}{n^{1/d}}$. 
It is of course, not surprising that if the graph is disconnected, the functionals describing clustering tasks may have minimizers which are rather different than the minimizers of the continuum functional.

While the above example shows the optimality of our results in some sense, we caution that there 
still may be settings relevant to machine learning in which the convergence of minimizers of appropriate functionals may hold even when $\frac{1}{n^{1/d}} \ll \veps_n  < \lambda \frac{(\log n)^{1/d}}{n^{1/d}}$. 

Finally, we remark that in the case $d=2$, the rate presented in \eqref{HypothesisEpsilon} is different from the connectivity rate in dimension $d=2$ which is $\lambda \frac{(\log n)^{1/2}}{n^{1/2}}$. An interesting open problem is to determine what happens to the graph total variation as $n \rightarrow \infty$, when one considers $\lambda \frac{(\log n)^{1/2}}{n^{1/2}} \ll \veps_n \leq \frac{(\log n)^{3/4}}{n^{1/2}}$.

\subsection{Related work.}  \label{rel_work}
\emph{Background on $\Gamma$-convergence of functionals related to perimeter.}
The notion of $\Gamma$-convergence was introduced by De Giorgi in the 70's and represents a standard notion of variational convergence. With compactness it ensures that minimizers of approximate functionals converge (along a subsequence) to a minimizer of the limiting functional. For  extensive exposition of the properties of $\Gamma$-convergence see the books by Braides \cite{braides2002gamma}  and Dal Maso \cite{DalMaso}.

A classical example of $\Gamma$-convergence of functionals to perimeter is the Modica and Mortola theorem (\cite{ModicaMortola})  that shows the $\Gamma$-convergence of Allen-Cahn (Cahn-Hilliard) 
free energy to perimeter. 

There is a number of results considering nonlocal functionals converging to the perimeter or to total variation.
In \cite{AB2}, Alberti and Bellettini study a nonlocal model for phase transitions where the energies do not have a gradient term as in the setting of Modica and Mortola, but a nonlocal term. 
In \cite{Savin},  Savin and Valdinoci consider a related energy involving more general kernels.
Esedo\={g}lu and Otto, \cite{Esed_Otto} consider nonlocal total-variation based functionals in multiphase systems and show their $\Gamma$-convergence to perimeter. 
Brezis, Bourgain, and Mironescu \cite{BBM} considered nonlocal functionals in order to give new characterizations of Sobolev and BV spaces. Ponce \cite{Ponce} extended their work and showed the 
$\Gamma$-convergence of the nonlocal functionals studied to local ones. In our work we adopt the approach of Ponce to show $\Gamma$-convergence as it is conceptually clear and efficient.

We also note the works of Gobbino \cite{Gob} and Gobbino and Mora \cite{GobMor} where elegant nonlocal approximations were considered for more complicated functionals, like the Mumford-Shah functional. 

In the discrete setting, works related to the $\Gamma$-convergence of functionals to continuous functionals involving perimeter include \cite{Yip}, \cite{vanGennip} and \cite{Chambolle}. 
The results by Braides and Yip \cite{Yip}, can be interpreted as the analogous results in a discrete setting to the ones obtained by Modica and Mortola. They give the description of the limiting functional (in the sense of $\Gamma$-convergence) after appropriately rescaling the energies. In the discretized version considered, they work on a regular grid and the gradient term gets replaced by a finite-difference approximation that depends on the mesh size $\delta$. Van Gennip and Bertozzi \cite{vanGennip} consider a similar problem and obtain analogous results.
In \cite{Chambolle}, Chambolle, Giacomini and Lussardi consider a very general class of anisotropic perimeters defined on discrete subsets of a finite lattice of the form $\delta \mathbb{Z}^N $. They prove the $\Gamma$-convergence of the functionals as $\delta \rightarrow0 $  to an anisotropic perimeter defined on a given domain in $\R^d$. 
 \medskip

\emph{Background on analysis of algorithms on point clouds as $n \to \infty$.} In the past years a diverse set of geometrically based methods has been developed to solve different tasks of data analysis like classification, regression, dimensionality reduction and clustering. One desirable and important property that one expects from these methods is consistency. That is, it is desirable that as the number of data points tends to infinity the procedure used ``converges" to some ``limiting" procedure. Usually  this``limiting" procedure involves a continuum functional defined on a domain in a Euclidean space or more generally on a manifold. 

Most of the available consistency results are about pointwise consistency. Among them are
works of Belkin and Niyogi \cite{bel_niy_LB},   % pointwise manifold LB
Gin\'e and Koltchinskii \cite{GK},
 Hein, Audibert,  von Luxburg \cite{hein_audi_vlux05}, %on weighted graph Laplacian on manifolds,  
 Singer \cite{Singer} and Ting, Huang, and  Jordan \cite{THJ}.
The works of  von Luxburg, Belkin and Bousquet on consistency of spectral clustering \cite{vonLuxburg} and 
Belkin and Niyogi \cite{belkin2007convergence} on the convergence of Laplacian Eigenmaps, as well as \cite{THJ},
consider spectral convergence and thus convergence of eigenvalues and eigenvectors, which are relevant for machine learning. An important difference between our work and the spectral convergence works is that in them, there is no explicit rate at which $\veps_n$ is allowed to converge to $0$ as $n \to \infty$.
  Arias-Castro, Pelletier, and Pudlo \cite{Pelletier} considered pointwise convergence of Cheeger energy and consequently of total variation, as well as variational convergence when the discrete functional is considered over an admissible set of characteristic functions which satisfy a ``regularity" requirement.  For the variational problem they show that the convergence holds essentially when $n^{- \frac{1}{2d+1}} \ll \veps_n  \ll 1$.
Maier, von Luxburg and Hein \cite{MvLH12} considered pointwise convergence  
  for Cheeger and normalized cuts, both for the
  geometric and kNN graphs and obtained an analogous range of scalings of graph construction on $n$
  for the convergence to hold.
 Pollard \cite{pollard1981strong}  considered the consistency of the $k$-means clustering algorithm.

\subsection{Example: An application to clustering} \label{example1}
Many algorithms involving graph cuts, total variation and related functionals on graphs are in use in data analysis. Here we present an illustration of how the $\Gamma$-convergence results can be applied in that context. In particular we show the consistency of \emph{minimal bisection} considered for example in \cite{Raz_bisection,FK06}.
The example we choose is simple and its primary goal is to give a hint of the possibilities. We intend to investigate the functionals relevant to data analysis in future works.

Let $D$ be domain satisfying the assumptions of Theorem \ref{DiscreteGamma}, for example the one depicted on Figure \ref{fig1}.
Consider the problem of dividing the domain into two clusters of equal sizes. In the continuum setting the problem can be posed as finding $ A_{min} \subset D$ such that 
$F(A) = TV(\chi_A),$ is minimized over all $A$ such that $\vol(D)=2\vol(A)$.
For the domain of Figure   \ref{fig1} there are exactly two minimizers ($A_{min}$ and its complement); illustrated on Figure \ref{fig2}. 

\begin{figure}[!htb]
\minipage{0.5\textwidth}
  \begin{center}
  	\includegraphics[width=0.75\linewidth]{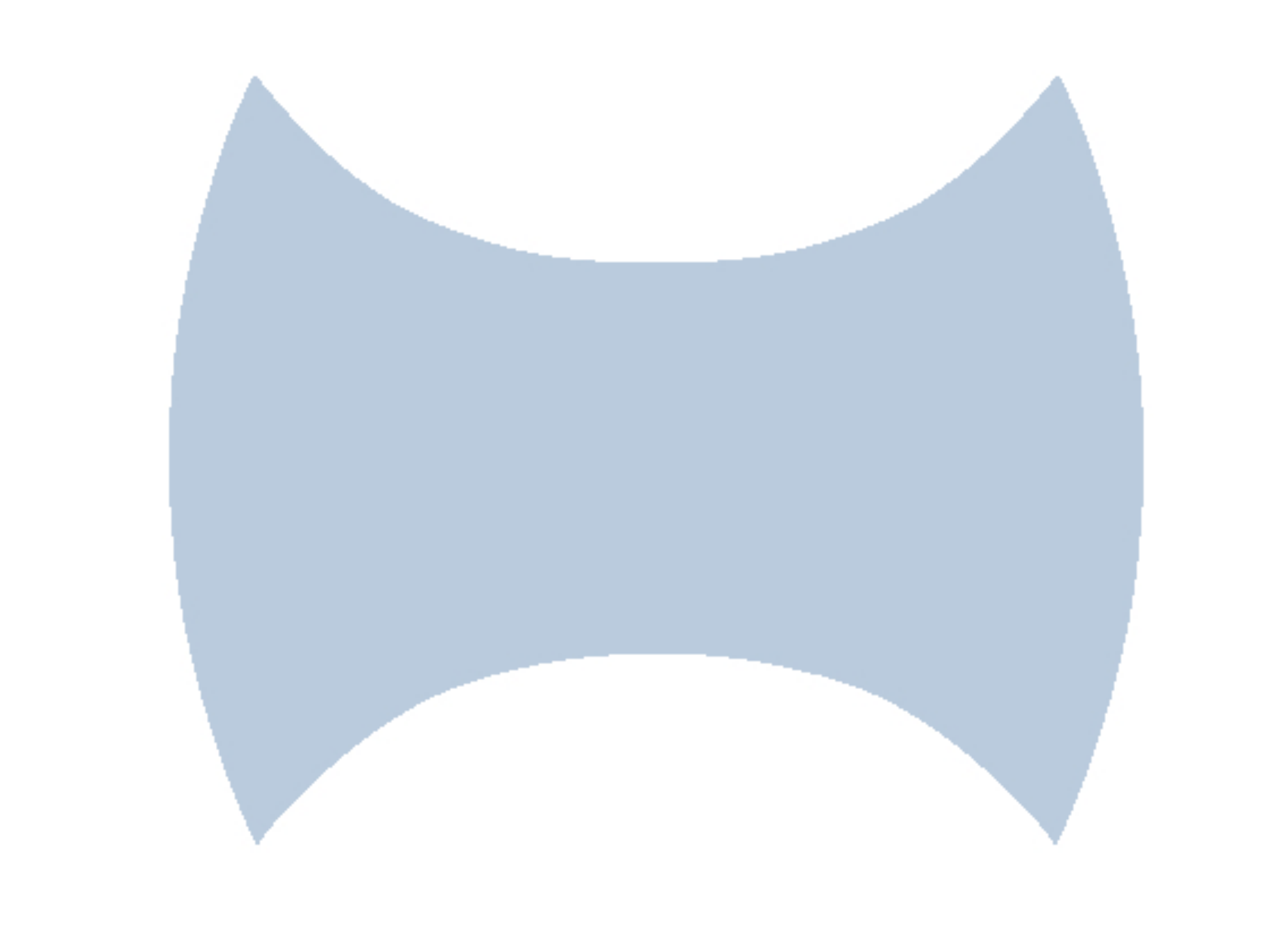}
	\vspace*{-10pt}
  	\caption{Domain $D$}\label{fig1}
	  \end{center}	
	\endminipage\hfill
\minipage{0.5\textwidth}
	\begin{center}
  	\includegraphics[width=0.75\linewidth]{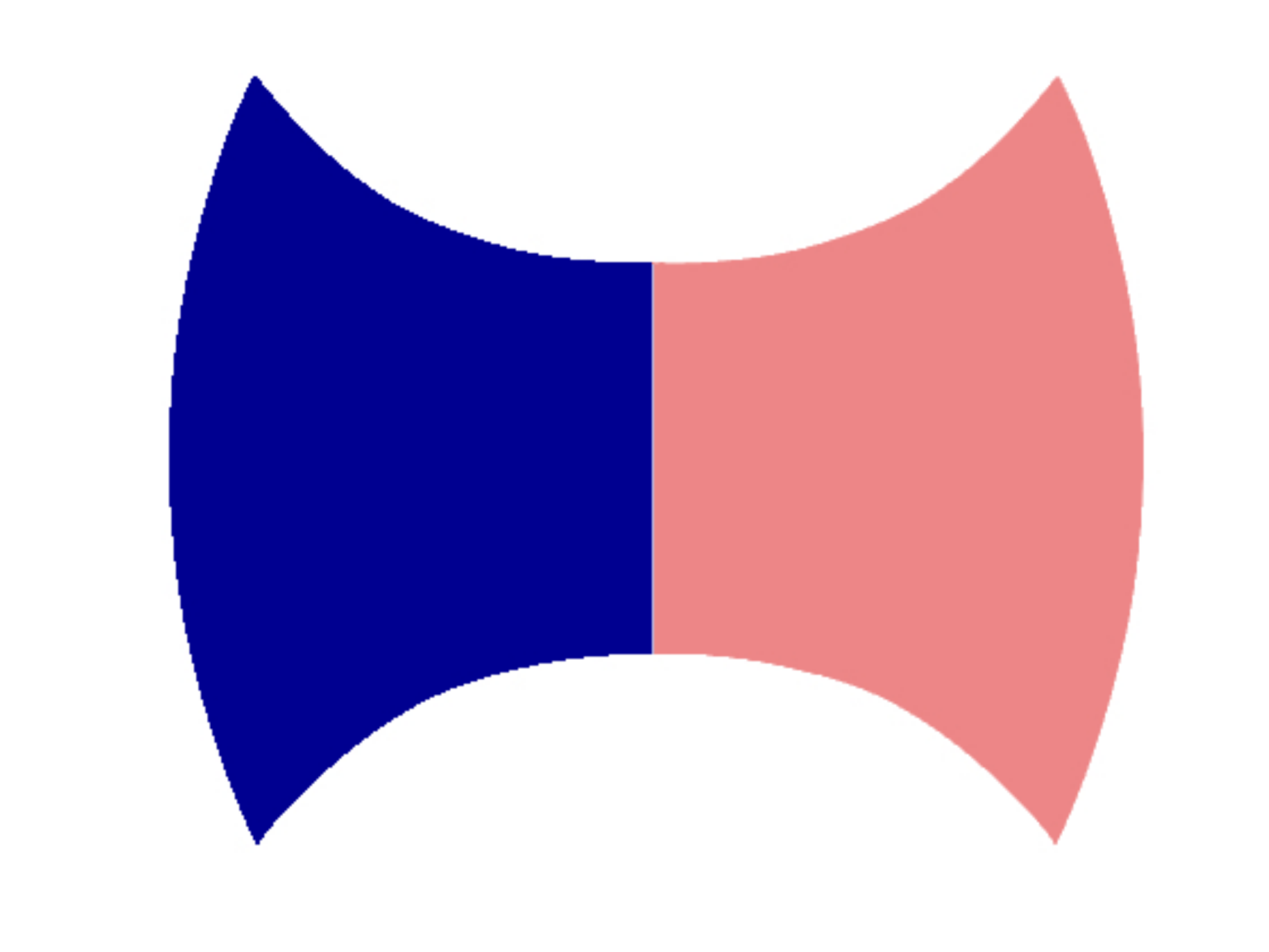}
	\vspace*{-10pt}
 	\caption{Energy minimizers}\label{fig2}
	\end{center}
	\endminipage\hfill
\vspace*{-10pt}
\end{figure}
In the discrete setting assume that $n$ is even and that $V_n=\{X_1, \dots, X_n\}$ are independent random points uniformly distributed on $D$. The clustering problem can be described as finding $\bar A_n \subset V_n$, 
which minimizes 
\[ F_n(A_n) = \tTV_{n, \veps_n} (\chi_{A_n}) \]
among all $A_n \subset V_n$ with $\sharp A_n = n/2$. We can extend the functionals $F_n$ and $F$  to be equal to $+\infty$ for sets which do not satisfy the volume constraint.

The kernel we consider for simplicity is the one given by $\eta(x)=1$ if $|x|< 1$ and $\eta(x)=0$ otherwise. While we did not consider the graph total variation with constraints in Theorem 
\ref{DiscreteGamma}, that extension is of technical nature. In particular 
  the liminf inequality of the definition of $\Gamma$-convergence of Definition \ref{def:Gamma} in the constraint case follows directly, while the limsup inequality
  follows using the Remark \ref{DensityLimsuptTV}. 

The compactness result implies that if $\veps(n)$ satisfy \eqref{HypothesisEpsilon}, then along a subsequence, the minimizers $\bar A_n$ of $F_n$ converge to $\bar A$ which minimizes $F$. Thus our results provide sufficient conditions which guarantee the consistency (convergence) of the scheme as the number of data points increases to infinity. 

 Here we illustrate the minimizers corresponding to different $\veps$ on a fixed dataset. 
Figure \ref{fig4} depicts the discrete minimizer  when $\veps$ is taken large enough. Note that this minimizer resembles the one in the continuous setting in Figure \ref{fig2}. In contrast, on Figure \ref{fig6} we present a minimizer when $\veps$ is taken too small. Note that in this case the energy of such minimizer is zero. 
The solutions are computed using the code of \cite{bresson2013adaptive}.
\begin{figure}[!htb]
\minipage{0.5\textwidth}
  	\includegraphics[width=\linewidth]{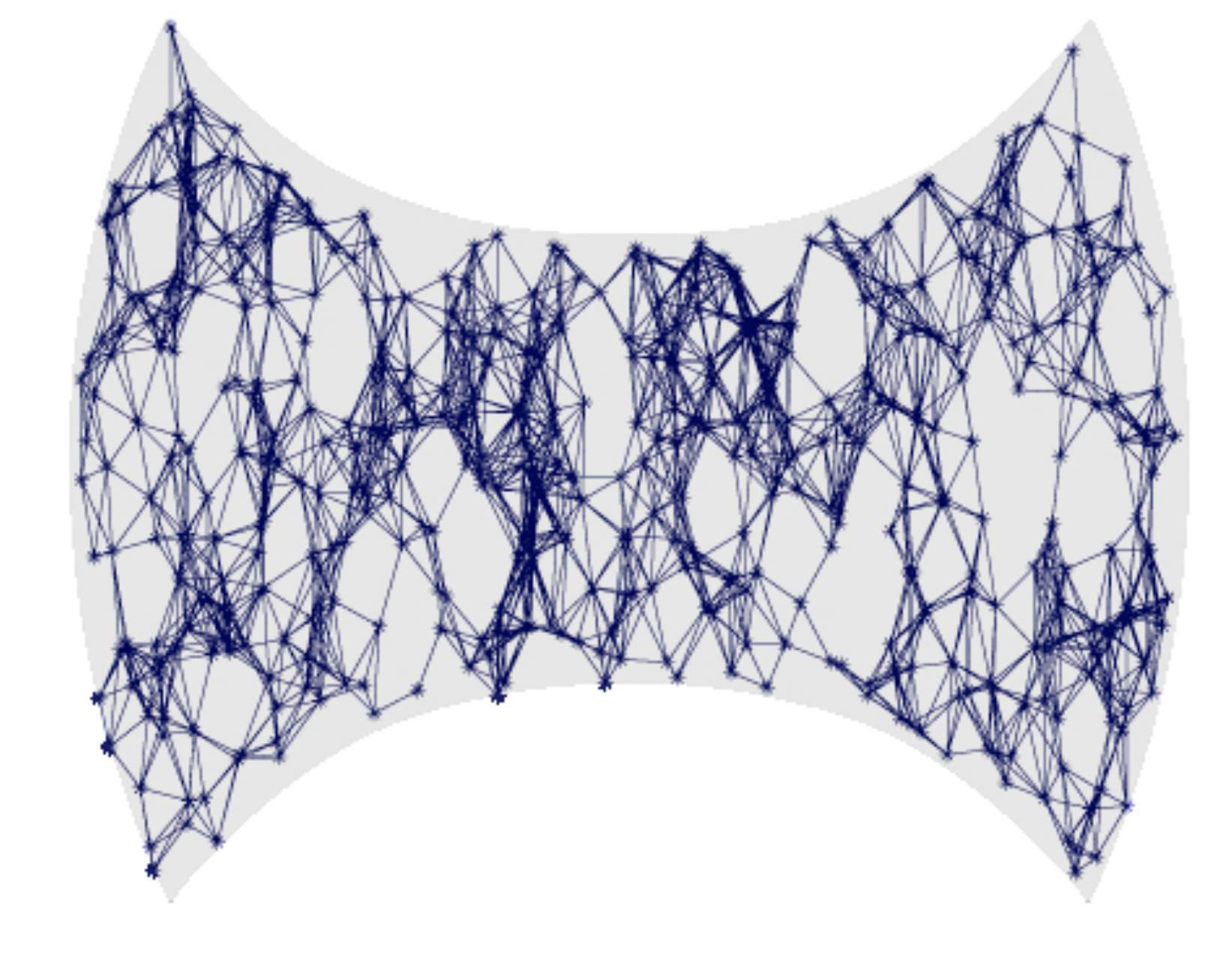}
  	\caption{Graph with n=500, $\veps=0.18$}\label{fig3}
	\endminipage\hfill
\minipage{0.5\textwidth}
  	\includegraphics[width=\linewidth]{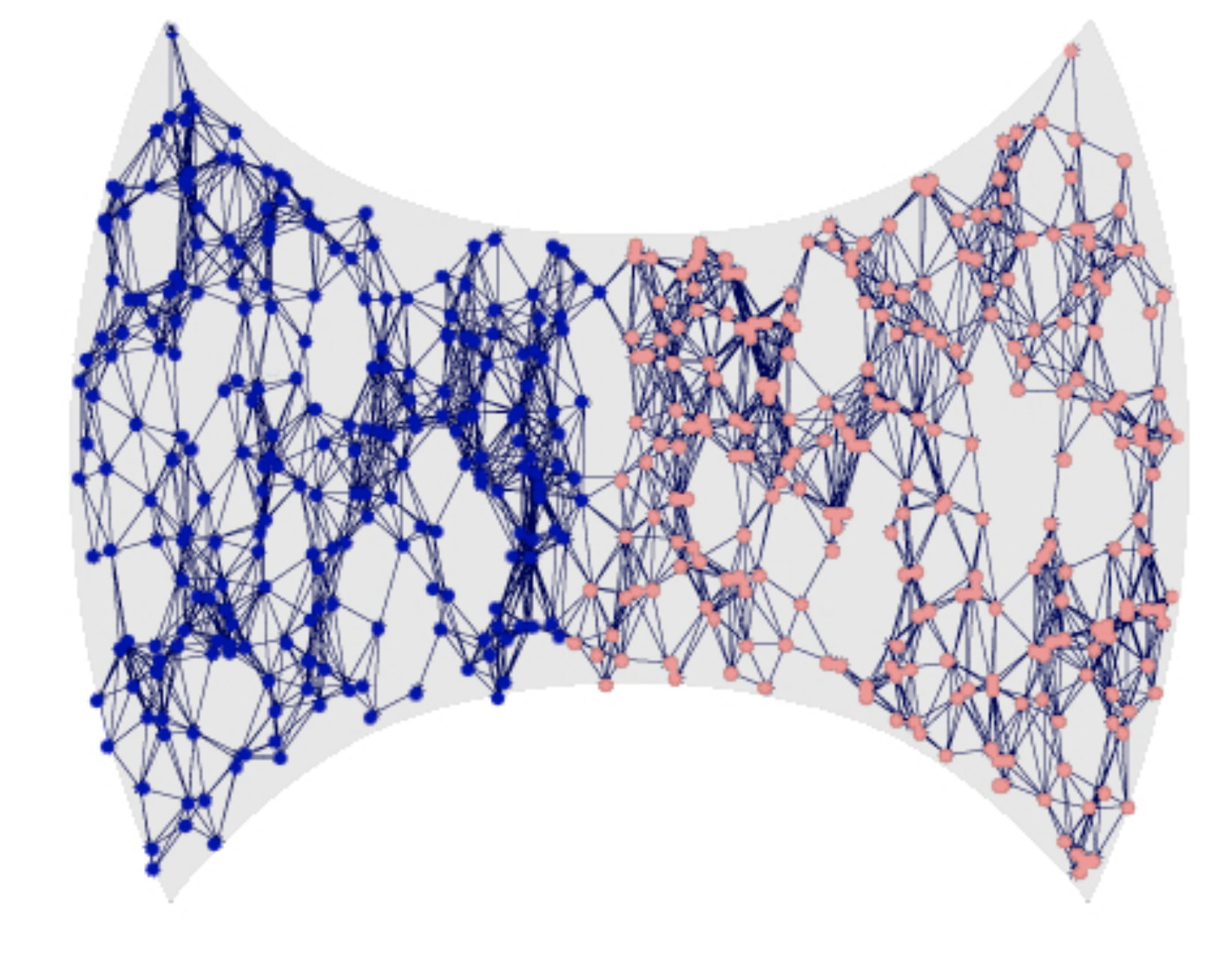}
 	\caption{Minimizers when $\veps=0.18$}\label{fig4}
	\endminipage\hfill
\end{figure}
\begin{figure}[!htb]
\minipage{0.5\textwidth}
  	\includegraphics[width=\linewidth]{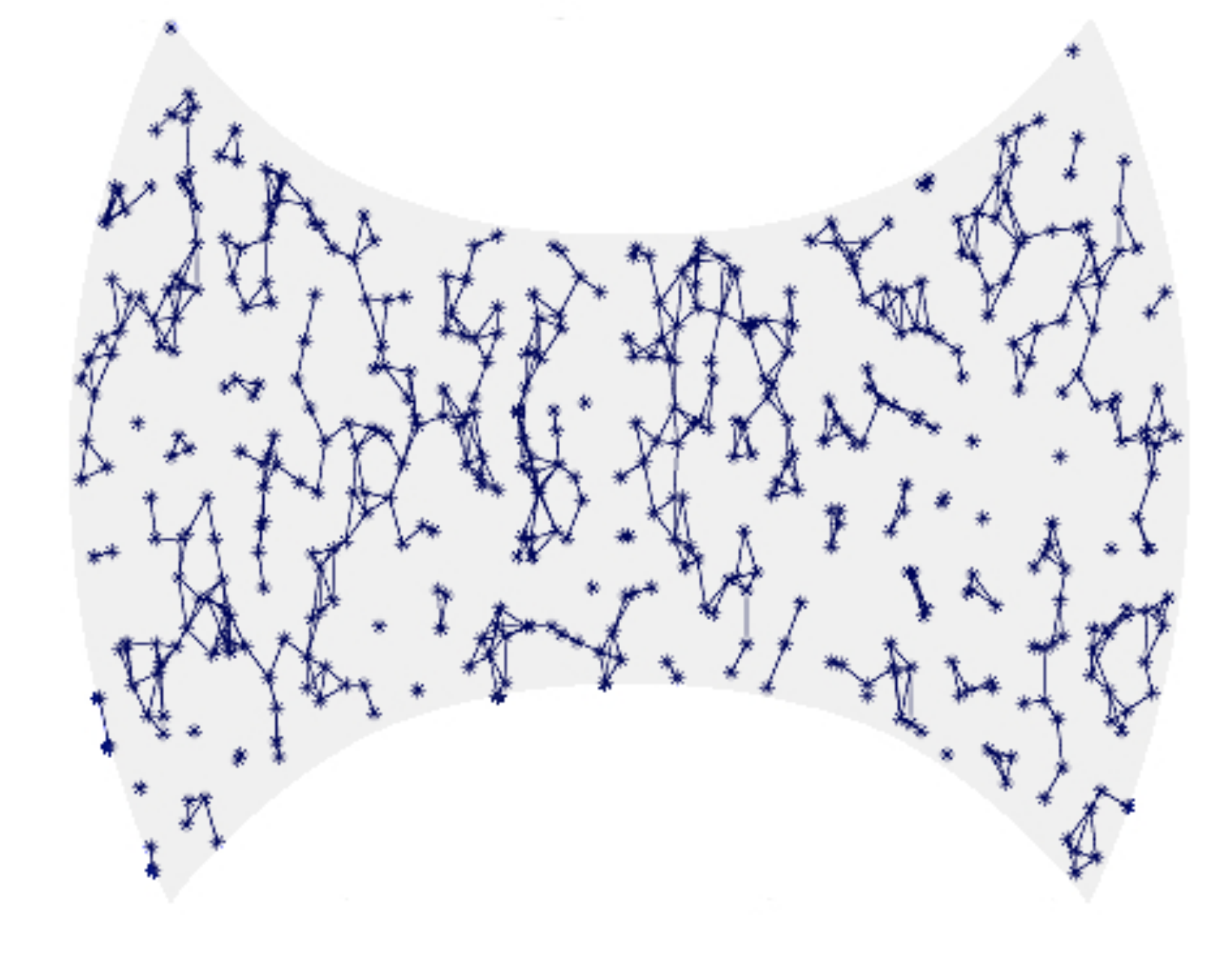}
  	\caption{Graph with n=500, $\veps=0.1$}\label{fig5}
	\endminipage\hfill
\minipage{0.5\textwidth}
  	\includegraphics[width=\linewidth]{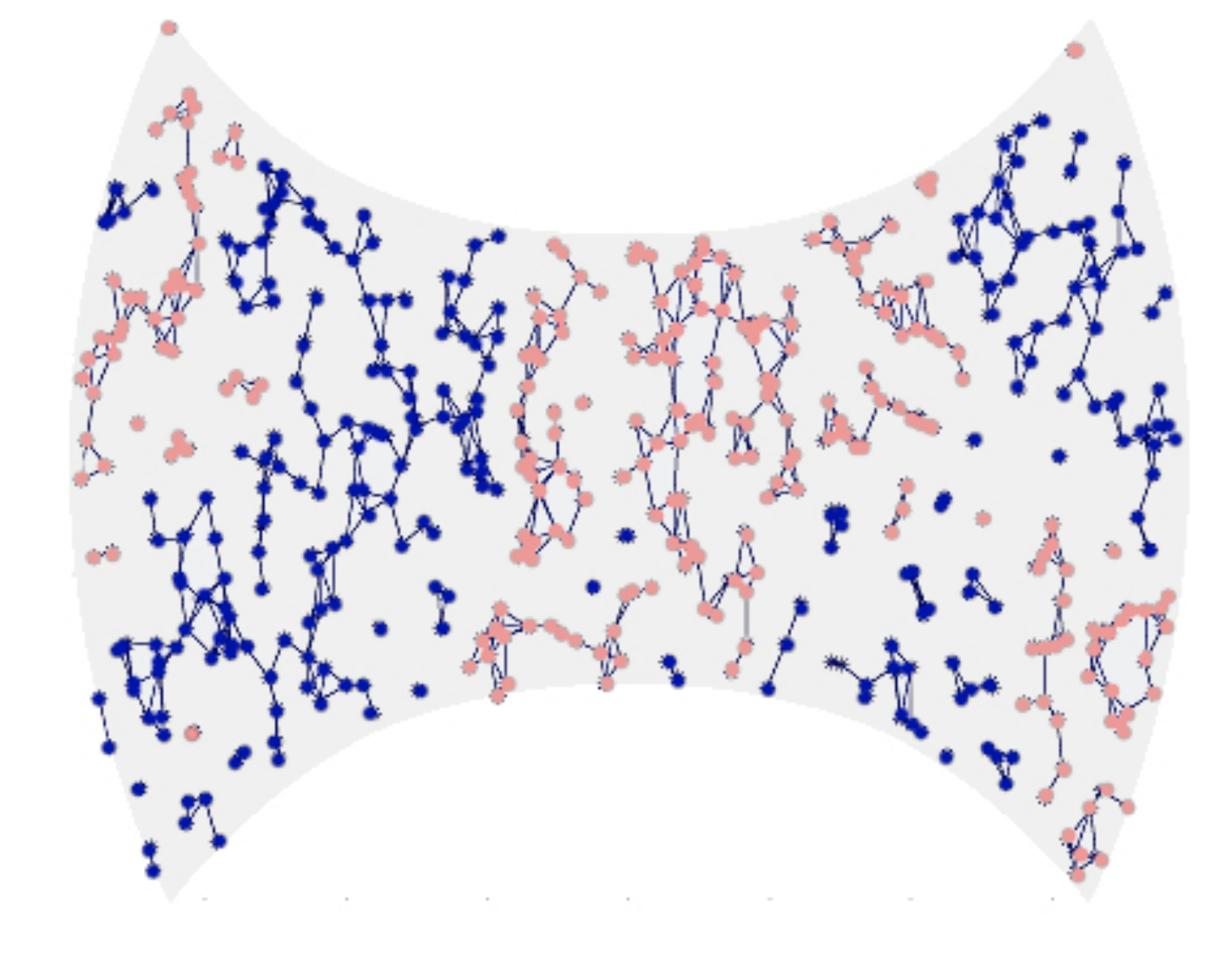}
 	\caption{A minimizer when $\veps=0.1$}\label{fig6}
	\endminipage\hfill
\end{figure}
\vspace*{-10pt}

\subsection{Outline of the approach.}

The proof of $\Gamma$-convergence of the graph total variation $\tTV_{n , \veps_n}$ to weighted total variation $TV( \tacka, \rho^2)$ relies on an intermediate object, the nonlocal functional
 $TV_\veps(\cdot,\rho) : L^1(D, \nu)\rightarrow [0 , \infty]$  given by:
\begin{equation}
TV_{\veps}(u; \rho):= \frac{1}{\veps}\int_{D}\int_{D} \eta_{\veps}(x-y)|u(x)-u(y)|\rho(x) \rho(y) dxdy.
\label{NonlocalTV}
\end{equation}
Note that the argument of  $\tTV_{n , \veps_n}$, is a function $u_n$ supported on the data points, while the argument of $TV_\veps(\cdot; \rho)$ is an $L^1(D, \nu)$ function; in particular a function defined on $D$.
Having defined the $TL^1$-metric, the proof of $\Gamma$-convergence has two main steps:
The first step is to compare the graph total variation $\tTV_{n , \veps_n}$, with the nonlocal continuum functional $TV_\veps(\cdot,\rho)$. 
To compare the functionals one needs an $L^1(D, \nu)$ function which, in $TL^1$ sense, approximates $u_n$. 
We use transportation maps (i.e. measure preserving maps) between the measure $\nu$ and $\nu_n$ to  define $\tilde u_n \in L^1(D,\nu)$. More precisely we set $\tilde u_n = u_n \circ T_n$ where $T_n$ is the transportation map between $\nu$ and $\nu_n$ constructed in Subsection \ref{OptimalMatchingResults}. 
Comparing $\tTV_{n , \veps_n}(u_n)$ with $TV_{\veps}(\tilde u_n; \rho)$ relies on the fact that $T_n$ is chosen in such a way that it transports mass as little as possible. The estimates on how far the mass needs to be moved were known in the literature when $\rho$ is constant. We extended the results to the case when $\rho$ is bounded from below and from above by positive constants.

The second step consists on comparing the continuum nonlocal total variation functionals \eqref{NonlocalTV} with the weighted total variation \eqref{cTV}.

The proof on compactness for $\tTV_{n, \veps_n}$, depends on an analogous compactness result for the nonlocal continuum functional $TV_\veps(\cdot, \rho)$.
\medskip

The paper is organized as follows. Section \ref{Notation} contains the notation and preliminary results from the weighted total variation, transportation theory and $\Gamma$-convergence of functionals  on metric spaces.
 More specifically,  in Subsection \ref{subsectionWeightedTotal} we introduce and present basic facts about weighted total variation. In Subsection \ref{TT} we introduce the optimal transportation problem and list some of its basic properties. In Subsection \ref{OptimalMatchingResults} we review results
 on optimal matching between the empirical measure $\nu_n$ and $\nu$. In Subsection \ref{sec:gamma} we recall the notion of $\Gamma$-convergence on metric spaces and introduce the appropriate extension to random setting. In Section \ref{TLp} we define the metric space $TL^p$ and prove some basic results about it. 
Section \ref{GammaConvTVe}  contains the proof of the $\Gamma$-convergence of the nonlocal continuum total variation  functional $TV_\veps$ to the TV functional.
The main result, the $\Gamma$-convergence of the graph TV functionals to the TV functional is proved in Section \ref{GammaConv}. In Subsection \ref{other points} we discuss the extension of the main result
to the case when $X_1, \dots, X_n$ are not necessarily independently distributed points.

\section{Preliminaries}
\label{Notation}
\subsection{Weighted total variation}
\label{subsectionWeightedTotal}
Let $D$ be an open and bounded subset of $\R^d$ and let $\psi:D \rightarrow (0,\infty)$ be a continuous function. Consider the measure $d \nu(x)= \psi(x) dx$. We denote by $L^1(D, \nu)$ the $L^1$-space with respect to $\nu$ and by $||\cdot||_{L^1(D,\nu)}$ its corresponding norm; we use $L^1(D)$ in the special case $ \psi \equiv1$ and $||\cdot||_{L^1(D)}$ for its corresponding norm. If  the context is clear, we omit the set $D$ and write $L^1(\nu)$ and $||\cdot||_{L^1(\nu)}$. Also, with a slight abuse of notation, we often replace $\nu$ by $\psi$ in the previous expressions; for example we use $L^1(D,\psi)$ to represent $L^1(D, \nu)$.

Following Baldi, \cite{baldi}, for $u \in L^1(D, \psi )$ define
\begin{equation}
TV(u; \psi) = \sup \left\{  \int_{D} u \divergence(\phi)dx \: : \: (\forall x \in D)  \:\; | \phi(x)| \leq \psi(x) \: , \:  \phi \in C^\infty_c(D, \R^d)   \right\}
\end{equation}
the \emph{weighted total variation of $u$ in $D$ with respect to the weight $\psi$ }. We denote by $BV(D; \psi)$ the set of functions $u \in L^1(D, \psi)$ for which $TV(u;\psi) < +\infty$.  When $\psi \equiv 1$ we omit it and write  $BV(D)$ and $TV(u)$. Finally, for measurable subsets $E \subset D$, we define the weighted perimeter in $D$ as the weighted total variation of the characteristic function of the set: 
$\Per(E; \psi)= TV(\chi_E; \psi )$.

Throughout the paper we restrict our attention to the case where $\psi$ is bounded from below and from above by positive constants. Indeed, in applications we consider $\psi = \rho^2$, where $\rho$ is continuous and bounded below and above by positive constants. 

\begin{remark}
Since $D$ is a bounded open set and $\psi$ is bounded from above and below by positive constants, the sets $L^1(D)$ and $L^1(D, \psi)$ are equal and the norms $||\cdot||_{L^1(D)}$ and $||\cdot||_{L^1(D, \psi)}$ are equivalent.  Also, it is straightforward to see from the definitions that in this case $BV(D)= BV(D; \psi)$.
\end{remark}

\begin{remark}
If $u \in BV(D;\psi)$ is smooth enough (say for example $u \in C^1(D)$) then the weighted total variation $TV(u;\psi)$ can be written as 
$$\int_{D}|\nabla u(x)| \psi(x) dx.$$
If $E$ is a regular subset of $D$, then $\Per(E; \psi)$ can be written as the following surface integral, 
\begin{equation*}
\Per(E;\psi)= \int_{\partial E \cap D} \psi(x) dS(x).
\end{equation*}
\end{remark}

One useful characterization of $BV(D; \psi)$ is provided in the next proposition whose proof can be found in \cite{baldi}.
\begin{proposition}
Let $u \in L^1(D, \psi)$, $u$ belongs to $BV(D; \psi)$ if and only if  there exists a finite positive Radon measure $|Du|_\psi$ and a $|Du|_\psi$-measurable function $\sigma:D \rightarrow \R^d$ with $|\sigma(x)|=1$ for  $|Du|_\psi$-a.e. $x \in D$ and such that $\forall \phi \in C^\infty_{c}(D, \R^d)$
\begin{equation*}
\int_{D} u \divergence(\phi) dx = - \int_{D} \frac{\phi(x) \cdot \sigma(x) }{\psi(x)} d|Du|_\psi(x).\end{equation*}
The measure $|Du|_\psi$ and the function $\sigma$ are uniquely determined by the previous conditions and the weighted total variation $TV(u;\psi)$ is equal to $|Du|_\psi(D)$.
\end{proposition}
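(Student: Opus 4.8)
The plan is to derive the representation as the weighted analogue of the classical structure theorem for $BV$ functions, by combining the Riesz representation theorem for vector-valued measures with the polar (Hahn) decomposition; since $\psi$ is bounded below and above by positive constants, say $0 < m \le \psi \le M < \infty$ on $\overline{D}$, the weight causes no essential difficulty.

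First I would consider the linear functional $L\colon C^\infty_c(D,\R^d) \to \R$ given by $L(\phi) := \int_D u\,\divergence(\phi)\,dx$, which is well defined since $u \in L^1(D)$ and $\divergence(\phi)\in C^\infty_c(D)$. The assumption $u \in BV(D;\psi)$ means exactly that $|L(\phi)| \le TV(u;\psi)$ whenever $|\phi(x)| \le \psi(x)$ for all $x$, so by homogeneity $|L(\phi)| \le TV(u;\psi)\,\sup_{x\in D}|\phi(x)|/\psi(x) \le \tfrac{1}{m}TV(u;\psi)\,\|\phi\|_\infty$. Hence $L$ is bounded for the supremum norm, extends uniquely to a bounded linear functional on $C_0(D,\R^d)$, and by the Riesz representation theorem is represented by a finite $\R^d$-valued Radon measure $\mu$ on $D$, namely $L(\phi) = -\int_D \phi\cdot d\mu$ for all $\phi \in C_0(D,\R^d)$. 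The polar decomposition then writes $\mu = \sigma\,|\mu|$, where $|\mu|$ is its (finite, positive) total variation measure and $\sigma$ is a $|\mu|$-measurable unit vector field.

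Next I would take $|Du|_\psi := \psi\,|\mu|$, which is again a finite positive Radon measure because $\psi\le M$, and keep the same $\sigma$. For $\phi \in C^\infty_c(D,\R^d)$ this gives
\[ \int_D u\,\divergence(\phi)\,dx = -\int_D \phi\cdot\sigma\,d|\mu| = -\int_D \frac{\phi(x)\cdot\sigma(x)}{\psi(x)}\,d|Du|_\psi(x), \]
which is the desired identity. For the equality $TV(u;\psi) = |Du|_\psi(D)$, the bound ``$\le$'' is immediate, since for admissible $\phi$ one has $-\int_D \phi\cdot\sigma\,d|\mu| \le \int_D |\phi|\,d|\mu| \le \int_D \psi\,d|\mu| = |Du|_\psi(D)$. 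For ``$\ge$'' I would argue in two approximation stages. First, mollification shows that the supremum defining $TV(u;\psi)$ does not change if one allows competitors $\phi \in C_c(D,\R^d)$ with $|\phi|\le\psi$: for such $\phi$ the mollifications $\phi_\delta$ of $\phi$ satisfy $|\phi_\delta(x)| \le \psi(x) + \omega(\delta)$ for a modulus of continuity $\omega$ of $\psi$ on a fixed compact set containing all relevant supports, so $(1+\omega(\delta)/m)^{-1}\phi_\delta$ is an admissible smooth competitor converging uniformly to $\phi$. Second, within $C_c(D,\R^d)$ one tests with $\phi = \psi\,w$, where $w\in C_c(D,\R^d)$, $|w|\le 1$, approximates $-\sigma$ in $L^1(D,|\mu|)$ (using density of $C_c$ in $L^1(|\mu|;\R^d)$ and the fact that truncation onto the closed unit ball does not increase the $L^1$-distance to $-\sigma$); then $\phi$ is continuous, compactly supported, admissible, and $L(\phi) = \int_D \psi\, w\cdot(-\sigma)\,d|\mu| \to \int_D \psi\,d|\mu| = |Du|_\psi(D)$.

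Finally I would check uniqueness: if $(|Du|_\psi,\sigma)$ and $(|Du|_\psi',\sigma')$ both satisfy the identity, then $\sigma\,|Du|_\psi/\psi$ and $\sigma'|Du|_\psi'/\psi$ define the same $\R^d$-valued distribution on $D$, hence the same finite $\R^d$-valued Radon measure $\mu$, and uniqueness of the polar decomposition forces $|Du|_\psi/\psi = |Du|_\psi'/\psi = |\mu|$ and $\sigma=\sigma'$ $|\mu|$-a.e., so that $|Du|_\psi=|Du|_\psi'$ and $\sigma=\sigma'$. The step I expect to be the main obstacle is the lower bound $TV(u;\psi)\ge|Du|_\psi(D)$: one has to reconcile the smoothness and compact-support requirements on the test fields with the facts that $\psi$ is only continuous and $\sigma$ only $|\mu|$-measurable, which is exactly what the two-stage approximation above handles; the remaining steps are the routine structure-theorem argument, and a complete treatment is given in \cite{baldi}.
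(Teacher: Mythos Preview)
The paper does not give its own proof of this proposition; it simply states that ``[its] proof can be found in \cite{baldi}.'' Your argument is the standard structure-theorem approach---Riesz representation for the distributional derivative followed by polar decomposition, then rescaling by $\psi$---and is correct; the two-stage approximation for the lower bound $TV(u;\psi)\ge|Du|_\psi(D)$ is the only place requiring care, and you handle it properly. One small omission: the ``if'' direction (existence of the representation implies $u\in BV(D;\psi)$) is not stated explicitly, but it is immediate from your ``$\le$'' inequality, since that bound shows the defining supremum is at most $|Du|_\psi(D)<\infty$.
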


We refer to $|Du|_\psi$ as the \emph{weighted total variation measure}  (with respect to $\psi$) associated to $u$. In case $\psi\equiv 1$, we denote $|Du|_\psi$ by $|Du|$ and we call it the \emph{total variation measure} associated to $u$. 

Using  the previous definitions one can check that $\sigma$ does not depend on $\psi$ and that the following relation between $|Du|_\psi$ and $|Du|$ holds
\begin{equation}
d |Du|_\psi(x) = \psi(x) d|Du|(x).
\label{TVweightedintermsofTVMeasure}
\end{equation}
In particular,
\begin{equation}
TV(u; \psi)= \int_{D} \psi(x) d|Du|(x).
\label{TVweightedintermsofTV}
\end{equation}
The function $\sigma(x)$ is the Radon--Nikodym derivative of the distributional derivative of $u$ ( denoted by $Du$) with respect to the total variation measure $|Du|$.

Since the functional $TV(\cdot ; \psi)$ is defined as a supremum of linear continuous functionals in $L^1(D, \psi)$, we conclude that $TV(\cdot ; \psi)$ is lower semicontinuous with respect to the $L^1(D,\psi)$-metric (and thus $L^1(D)$-metric given the assumptions on $\psi$). That is, if $u_n \rightarrow_{L^1(D,\psi)} u$ as $n \rightarrow \infty$, then 
\begin{equation}
\liminf_{n \rightarrow \infty} TV(u_n ; \psi) \geq TV(u; \psi).
\label{lscWeightedTV}
\end{equation}

We finish this section with the following approximation result that we use in the proof of the main theorem of this paper. We give a proof of this result in Appendix \ref{AppendixProofDensityweightedBV}.
\begin{proposition}
\label{Approximation inBV(D)}
Let $D$ be an open and bounded set with Lipschitz boundary and let $\psi:D \rightarrow \R$ be a continuous function which is bounded from below and from above by positive constants. Then, for every function $u \in BV(D,\psi)$ there exists a sequence $\left\{ u_n\right\}_{n \in \N}$ with $u_n \in C^\infty_c(\R^d) $ such that $u_n \rightarrow_{L^1(D)} u$ and $\int_{D}|\nabla u_n| \psi(x)dx \rightarrow TV(u; \psi )$ as $n \rightarrow \infty$.
\end{proposition}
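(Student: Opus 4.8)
\emph{Proof strategy.} Because $\psi$ is bounded between positive constants we have $BV(D,\psi)=BV(D)$ and the $L^1(D)$- and $L^1(D,\psi)$-topologies coincide; using also the identity $TV(u;\psi)=\int_D\psi\,d|Du|$ from \eqref{TVweightedintermsofTV}, the task reduces to producing $u_n\in C^\infty_c(\R^d)$ with $u_n\to u$ in $L^1(D)$ and $\int_D|\nabla u_n|\,\psi\,dx\to\int_D\psi\,d|Du|$. The plan is to first obtain smooth approximants in the unweighted strict sense, then promote the convergence of the gradient energies to the weighted one, and finally cut off to land in $C^\infty_c(\R^d)$.

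For the first step I would invoke the classical approximation theorem for $BV$ functions on bounded Lipschitz domains (Anzellotti--Giaquinta; see the monograph of Ambrosio, Fusco and Pallara): there is a sequence $v_k\in C^\infty(\closure D)$ with $v_k\to u$ in $L^1(D)$ and $\int_D|\nabla v_k|\,dx\to|Du|(D)$. This is where the Lipschitz regularity of $\partial D$ enters: one writes $u$ via a partition of unity subordinate to boundary charts, and each boundary piece is mollified after a small translation into $D$ along the graph direction, which is exactly what keeps the total variation from leaking onto $\partial D$. (Here $C^\infty(\closure D)$ denotes the restriction to $\closure D$ of a function in $C^\infty(\R^d)$.)

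The second step promotes this \emph{unweighted} strict convergence to the \emph{weighted} statement. From $v_k\to u$ in $L^1(D)$ and the boundedness of $|Dv_k|(D)$ one gets $Dv_k\rightharpoonup Du$ weakly-$*$ as $\R^d$-valued measures on $D$ (using $\int\phi\cdot dDv_k=-\int v_k\divergence\phi\,dx$ for $\phi\in C^\infty_c(D,\R^d)$); since in addition $|Dv_k|(D)\to|Du|(D)$, a standard fact — weak-$*$ convergence of measures together with convergence of total masses forces weak-$*$ convergence of the associated total variation measures — yields $|\nabla v_k|\,\mathcal{L}^d\rightharpoonup|Du|$ weakly-$*$ on $D$. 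Moreover the convergence of the total masses to $|Du|(D)$ rules out any concentration of $|\nabla v_k|\,\mathcal{L}^d$ near $\partial D$, i.e. the sequence is tight in $D$; since $\psi$ is continuous and bounded on $D$ we may then integrate $\psi$ against this tight, weakly-$*$ convergent sequence and conclude $\int_D|\nabla v_k|\,\psi\,dx\to\int_D\psi\,d|Du|=TV(u;\psi)$. Finally, fixing $\zeta\in C^\infty_c(\R^d)$ with $\zeta\equiv1$ on a neighbourhood of $\closure D$ and setting $u_k:=\zeta\,\widetilde v_k$, where $\widetilde v_k\in C^\infty(\R^d)$ extends $v_k$, we obtain $u_k\in C^\infty_c(\R^d)$ that coincides with $v_k$ on $D$, so both convergences persist and the proposition follows.

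The part I expect to require the most care is the control of the gradient measures near $\partial D$ underlying the first step: a naive attempt — extend $u$ to a compactly supported $BV$ function on $\R^d$ and mollify — typically creates extra $|Du|$-mass on $\partial D$, so that $\limsup_k\int_D|\nabla v_k|\,dx$ exceeds $|Du|(D)$ and strict convergence (hence, after the second step, the weighted limit) fails. Preventing this is precisely the role of the Lipschitz hypothesis. If one prefers a self-contained argument to a citation, the cleanest route is to carry the weight $\psi$ through the Anzellotti--Giaquinta construction from the start, using that $\psi$ oscillates arbitrarily little on each small chart ball so that the weighted and unweighted local energies agree up to an error that is negligible in the limit.
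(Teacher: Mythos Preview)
Your argument is correct, but it follows a different route from the paper's. The paper first reduces to approximation by $C^\infty(D)\cap BV(D)$ (invoking Theorem~10.29 in \cite{Leoni} to pass from there to $C^\infty_c(\R^d)$), then handles the weight in two stages: for Lipschitz $\psi$ it cites Baldi's weighted Anzellotti--Giaquinta theorem directly, and for merely continuous $\psi$ it approximates $\psi$ from above by Lipschitz functions $\psi_k$, applies the Lipschitz case, and concludes via $\psi\le\psi_k$ together with the lower semicontinuity of $TV(\,\cdot\,;\psi)$.

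Your approach instead treats the weight in one stroke: you run the \emph{unweighted} strict approximation and then promote it to the weighted statement via the measure-theoretic fact that weak-$*$ convergence of $Dv_k$ plus convergence of total masses forces $|Dv_k|\rightharpoonup|Du|$ weak-$*$ with tightness in $D$, so that testing against the bounded continuous $\psi$ gives $\int_D\psi\,d|Dv_k|\to\int_D\psi\,d|Du|$. This is essentially the Reshetnyak continuity theorem specialized to integrands depending only on $x$, and it bypasses both the Lipschitz-$\psi$ reduction and the citation to Baldi. The trade-off is that the paper's argument is modular (black-boxing the weighted approximation for Lipschitz weights), while yours is more self-contained on the weight side but leans on the claim that the boundary-chart construction yields approximants smooth up to $\closure D$; this is true for Lipschitz domains, though if you want to avoid that claim you can instead take $v_k\in C^\infty(D)\cap W^{1,1}(D)$ and reach $C^\infty_c(\R^d)$ via the $W^{1,1}$ extension operator followed by mollification and cutoff.
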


\subsection{Transportation theory} \label{TT}
In this section $D$ is an open and bounded domain in $\R^d$. We denote by $\mathfrak{B}(D)$ the Borel $\sigma$-algebra of $D$ and by $\mathcal{P}(D)$ the set of all Borel probability measures on $D$.  Given $1 \leq p < \infty$, the $p$-OT distance between $\mu, \tilde{\mu} \in \mathcal{P}(D)$ (denoted by $d_p(\mu, \tilde{\mu})$)  is defined by:
\begin{equation} \label{ot plan}
d_p(\mu, \tilde{\mu}):= \min\left\{  \left( \int_{D \times D}|x-y|^p d \pi (x,y) \right)^{1/p}\: : \: \pi \in \Gamma(\mu, \tilde{\mu})  \right\},
\end{equation}
where $\Gamma(\mu, \tilde{\mu})$ is the set of all {\em couplings} between $\mu$ and $\tilde{\mu}$, that is, the set of all  Borel probability measures on $D \times D$ for which the marginal on the first variable is $\mu$ and the marginal on the second variable is $\tilde{\mu}$.  The elements $\pi \in \Gamma(\mu, \tilde{\mu})$ are also referred as \emph{transportation plans} between $\mu$ and $\tilde{\mu}$. When $p=2$ the distance is also known as the Wasserstein distance. 
The existence of minimizers, which justifies the definition above, is straightforward to show, see \cite{villani2003topics}.
When $p=\infty$
\begin{equation} \label{inf ot}
d_\infty(\mu, \tilde{\mu}):= \inf \left\{  \esssup_\pi \{ |x-y| \::\: (x,y) \in D \times D \}  \: : \: \pi \in \Gamma(\mu, \tilde{\mu})  \right\},
\end{equation}
defines a metric on $\mathcal P(D)$, which is called the $\infty$-transportation distance.

Since $D$ is bounded  the convergence in OT metric is equivalent to weak convergence of probability measures.  For details see for instance \cite{villani2003topics}, \cite{ambrosio2008gradient} and the references therein.
In particular, $\mu_n \overset{w}{\longrightarrow} \mu $ (to be read $\mu_n $ converges weakly to $\mu$) if and only if for any $1 \leq p < \infty$ there is a sequence of transportation plans between $\mu_n$ and $\mu$, $\left\{ \pi_n \right\}_{n \in \N}$, for which:
\begin{equation}
 \lim_{n \rightarrow \infty} \iint_{D \times D} | x- y |^p d \pi_n(x,y) =0. 
\label{convergentPlans}
\end{equation}
Since $D$ is bounded, \eqref{convergentPlans} is equivalent to $ \lim_{n \rightarrow \infty} \iint_{D \times D} | x- y | d \pi_n(x,y) =0$. 
We say that a sequence of transportation plans, $\left\{ \pi_n \right\}_{n \in \N}$  (with $\pi_{n} \in \Gamma(\mu, \mu_n)$),  is \emph{stagnating} if it satisfies the condition \eqref{convergentPlans}.
We remark that, since $D$ is bounded, it is straightforward to show that a sequence of transportation plans is stagnating if and only if 
$\pi_n$ converges weakly in the space of probability measures on $D \times D$ to $\pi = (id \times id)_\sharp \mu$.

Given a Borel map $T: D \rightarrow D$ and $\mu \in \mathcal{P}(D)$ the \emph{push-forward} of $\mu$ by $T$, denoted by $T_{\sharp} \mu \in \mathcal{P}(D)$ is given by:
\begin{equation*}
T_{\sharp} \mu(A):= \mu\left( T^{-1}(A) \right), \: A \in \mathfrak{B}(D).
\end{equation*}
Then for any bounded Borel function $\varphi: D \rightarrow \R$ the following change of variables in the integral holds:
\begin{equation} \label{chofvar}
 \int_D \varphi(x) \: d (T_\sharp \mu)(x) = \int_D \varphi(T(x)) \, d \mu(x).
\end{equation}

We say that a Borel map $T : D \rightarrow D$ is a \emph{transportation map} between the measures $\mu\in \mathcal{P}(D)$ and $\tilde{\mu} \in \mathcal{P}(D)$ if $\tilde{\mu} = T_\sharp \mu$. In this case, we associate a transportation plan $\pi_T \in \Gamma(\mu , \tilde{\mu})$ to $T$ by:
\begin{equation}
\pi_T:= (\id \times T)_{\sharp}\mu,
\label{TransportationMapPlan1}
\end{equation}
where $(\id \times T): D \rightarrow D \times D$  is given by $(\id \times T)(x) = \left(x, T(x) \right)$. For any $c \in L^1(D\times D , \mathfrak{B}\left(D \times D \right),\pi)$
\begin{equation}
\int_{D \times D} c(x,y) d \pi_T(x,y) = \int_{D}c\left(x, T(x)\right) d \mu(x).
\label{TransportationMapPlan}
\end{equation}

It is well known that when the measure $\mu \in \mathcal{P}(D)$ is absolutely continuous with respect to the Lebesgue measure, the problem on the right hand side of \eqref{ot plan} is equivalent to:
\begin{equation}
 \min \left\{  \left( \int_{D }|x-T(x)|^p d \mu(x) \right)^{1/p}\: : \:  T_{\sharp}\mu= \tilde{\mu}  \right\},
 \label{ot map}
\end{equation}
and when $p$ is strictly greater than $1$, the problem \eqref{ot plan} has a unique solution which is induced (via \eqref{TransportationMapPlan1}) by a transportation map $T$ solving \eqref{ot map} (see \cite{villani2003topics}).
In particular when the measure $\mu$ is absolutely continuous with respect to the Lebesgue measure,  $\mu_n \overset{{w}}{\longrightarrow}{\mu}$ as $n \rightarrow \infty$ is equivalent to the existence of a sequence $\left\{ T_n\right\}_{n \in \N}$ of transportation maps, (${T_n}_{\sharp} \mu= \mu_n$)  such that:
\begin{equation}
\int_{D} | x- T_n(x) | d \mu(x) \rightarrow 0, \: \te{ as } \: n \rightarrow \infty.
\label{convergentMaps}
\end{equation}
We say that a sequence of transportation maps $\left\{ T_n \right\}_{n \in \N}$ is \emph{stagnating} if it satisfies  \eqref{convergentMaps}.

We consider now the notion of inverse of transportation plans. For $\pi \in \Gamma(\mu, \tilde{\mu})$, the \emph{inverse plan} $\pi^{-1}\in \Gamma(\tilde{\mu}, \mu)$ of $\pi$ is given by:
\begin{equation}
\pi^{-1}:= s_{\sharp}\pi,
\label{Inverse}
\end{equation}
where $s: D \times D \rightarrow D \times D$ is defined as $s(x,y) =(y,x)$. Note that for any $c \in L^1(D\times D ,\pi)$:
\begin{equation*}
\int_{D \times D} c(x,y) d \pi(x,y) = \int_{D \times D} c(y,x) d \pi^{-1}(x,y).
\end{equation*}
% paragraph{}

Let $\mu, \tilde{\mu} , \hat{\mu} \in  \mathcal{P}(D)$. The \emph{composition of  plans} $\pi_{12} \in \Gamma(\mu, \tilde{\mu})$ and $\pi_{23}\in \Gamma(\tilde{\mu}, \hat{\mu})$  was discussed in \cite{ambrosio2008gradient}[Remark 5.3.3]. In particular there exists a probability measure $\bm{\pi}$ on $D \times D \times D$ such that the projection of $\bm{\pi}$ to first two variables is $\pi_{12}$, and to second and third variables is $\pi_{23}$. We consider $\pi_{13}$ to be the projection of $\bm{\pi}$ to the first and third variables. We will refer $\pi_{13}$ as \emph{a} composition of $\pi_{12}$ and $\pi_{23}$ and write $\pi_{13} =   \pi_{23} \circ \pi_{12}$. Note $ \pi_{13}  \in \Gamma(\mu, \hat{\mu})$.

\subsection{Optimal matching results} \label{OptimalMatchingResults}
In this section we discuss how to construct the transportation maps which allow us to make the transition from the functions of the data points to continuum functions. To obtain good estimates we want to match the  measure $\nu$, out of which the data points are sampled, with the empirical measure of data points while moving the mass as little as possible.

Let $D$ be an open, bounded, connected domain on $\R^d$ with Lipschitz boundary. Let $\nu$ be a measure on $D$ with density $\rho$ which is bounded from below and from above by positive constants.
Consider $(\Omega , \mathcal{F}, \mathbb{P})$ a probability space that we assume to be rich enough to support a sequence of independent random points  $X_1, \dots, X_n, \dots$ distributed on $D$ according to measure $\nu$. We seek upper bounds on the transportation distance between $\nu$ and the empirical measures $\nu_n =\frac{1}{n} \sum_{i=1}^n \delta_{X_i}$. 
It turned out that in the proof of $\Gamma$-convergence it was most useful to have estimates on the 
infinity transportation distance
\[ d_\infty(\nu, \nu_n) = \inf\{  \|Id - T_n\|_{\infty}  \::\: T_n : D \to D, \; T_{n\sharp} \nu= \nu_n \}, \]
which measures what is the least maximal distance that a transportation map $T_n$ between $\nu$ and $\nu_n$ has to move the mass.

If $\nu$ were a discrete measure with $n$ particles, then the infinity transportation distance is the min-max matching distance. There is a rich history of discrete matching results (see \cite{AKT,LeightonShor,ShorYukich,TalagrandYukich,Talagrand,TalagrandGenericChain,TalagrandNewBook} and references therein). In fact, let us first consider the case where $D=(0,1)^d$ and $\rho$ is constant, that is, assume the data points are uniformly distributed on $(0,1)^d$. Also, assume for simplicity that $n$ is of the form $n = k^d$ for some $k \in \N$. Consider $P= \left\{p_1, \dots , p_n \right\}$ the set of $n$ points in $(0,1)^d$  of the form $ (\frac{i_1}{2k} ,\dots , \frac{i_n}{2k})$ for $i_1, \dots, i_n $ odd integers between $1 $ and $2k$. The points in $P$ form a regular $k \times \dots \times k $  array in $(0,1)^d$ and in particular each point in $P$ is the center of a cube with volume $1/n$. As in \cite{LeightonShor} we call the points in $P$ grid points and the cubes generated by the points in $P$ grid cubes.

In dimension $d=2$, Leighton and Shor \cite{LeightonShor} showed that, when 
$\rho$ is constant,
 there exist $c>0$ and $C>0$ such that with very high probability (meaning probability greater than $1- n^{-\alpha}$ where $\alpha= c_1 (\log n)^{1/2}$ for some constant $c_1>0$):
\begin{equation}
 \frac{c(\log n)^{3/4}}{n^{1/2}} \leq \min_{\pi}  \max_{i}|  p_i - X_{\pi(i)}|  \leq \frac{ C(\log n)^{3/4}}{n^{1/2}}
 \label{LeighShor1}
\end{equation}
where $\pi$ ranges over all permutations of $\left\{1, \dots , n \right\}$. In other words,  when $d=2$, with high probability the $\infty$-transportation distance between the random points and the grid points is of order $\frac{ (\log n)^{3/4}}{n^{1/2}}$. 

For $d\geq 3$,  Shor and Yukich \cite{ShorYukich} proved the analogous result to \eqref{LeighShor1}. 
They showed that,  when  $\rho$ is constant, there exist $c>0$ and $C>0$ such that with very high probability
\begin{equation}
 \frac{c(\log n)^{1/d}}{n^{1/d}} \leq \min_{\pi}  \max_{i}|  p_i - X_{\pi(i)}|  \leq \frac{ C(\log n)^{1/d}}{n^{1/d}}.
 \label{ShorYukich1}
\end{equation}

The result in dimension $d \geq 3$ is based on the matching algorithm introduced by Ajtai, Koml\'os, and Tusn\'ady in \cite{AKT}.  It relies on a dyadic decomposition of $(0,1)^d$ and transporting step by step between levels of the dyadic decomposition. The final matching is obtained as a composition of the matchings between consecutive levels. For $d=2$ the AKT algorithm still gives an upper bound, but not a sharp one.
As remarked in \cite{ShorYukich}, there is a crossover in the nature of the matching when $d=2$: for $d \geq 3$, the matching length between the random points and the points in the grid is determined by the behavior of the points locally, for $d=1$ on the other hand, the matching length is determined by the behavior of  random points globally, and finally for $d=2$ the matching length is determined by the behavior of the random points at all scales. At the level of the AKT algorithms this means that for $d \geq 3$ the major source of the transportation distance is at the finest scale, for $d=1$ at the coarsest scale, while for $d=2$ distances at all scales are of the same size (in terms of how they scale with $n$). The sharp result in dimension $d=2$ by Leighton and Shor required a more sophisticated matching procedure. An alternative proof in $d=2$ was  provided by Talagrand \cite{Talagrand} who also provided 
more streamlined and conceptually clear proofs in \cite{TalagrandGenericChain,TalagrandNewBook}. 
These results, can be used to obtain bounds on the transportation distance in the continuum setting.
% In fact, in order to construct a transportation map between $\nu$ and $\nu_n$ which gives a good upper bound to $d_\infty(\nu, \nu_n)$, one can match every point in a grid cube to its corresponding grid point and then match every grid point to a random point using the optimal discrete matching from \eqref{LeighShor1} or \eqref{ShorYukich1}. In this way every point in $(0,1)^d$ gets mapped to a data point, and it is straightforward to see that this mapping defines a transportation map $T_n$ between $\nu$ and $\nu_n$.

The results above were extended  in \cite{W8L8} to the case of general domains and general measures with densities bounded from above and below by positive constants. 
Combined with Borel-Cantelli lemma they imply the following:
\begin{theorem} \label{thm:InifinityTransportEstimate}
Let $D$ be an open, connected and bounded subset of $\R^d$ which has Lipschitz boundary. Let $\nu$ be a probability measure on $D$ with density $\rho$ which is bounded from below and from above by positive constants. Let  $X_1, \dots, X_n, \dots$ be a sequence of independent random points  distributed on $D$ according to measure $\nu$ and let $\nu_n$ be the associated empirical measures \eqref{empirical}.
Then there is a constant $C>0$ such that for $\mathbb{P}$-a.e. $\omega \in \Omega$ there exists a sequence of transportation maps $\left\{ T_n \right\}_{n \in \N}$ from $\nu$ to $\nu_n$   ($T_{n \sharp} \nu = \nu_n$) and  such that: 
\begin{alignat}{2}
\te{if } d&=2 \te{ then}\qquad  & \limsup_{n \rightarrow \infty}   \frac{n^{1/2} \|Id - T_n\|_\infty }{(\log n)^{3/4}} & \leq C\label{InifinityTransportEstimate d=2}\\
\te{and\ if } d & \geq 3 \te{ then} & \limsup_{n \rightarrow \infty}  \frac{n^{1/d} \|Id - T_n\|_\infty }{(\log n)^{1/d}} & \leq C.
\label{InifinityTransportEstimate d>2}
\end{alignat}
\end{theorem}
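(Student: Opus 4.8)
The plan is to reduce Theorem~\ref{thm:InifinityTransportEstimate} to the matching results already recorded in this subsection (the Leighton--Shor estimate \eqref{LeighShor1} for $d=2$, the Shor--Yukich estimate \eqref{ShorYukich1} for $d\ge 3$, and their extension to general domains and densities cited from \cite{W8L8}) together with the Borel--Cantelli lemma. First I would extract from \cite{W8L8} the statement that there exist constants $C>0$ and $c_1>0$ such that for every $n$, with probability at least $1-n^{-c_1 (\log n)^{1/2}}$ in $d=2$ (respectively with probability at least $1-n^{-c_1(\log n)^{1/2}}$, or any summable-in-$n$ tail, in $d\ge 3$), one has $d_\infty(\nu,\nu_n)\le C (\log n)^{3/4}/n^{1/2}$ when $d=2$ and $d_\infty(\nu,\nu_n)\le C(\log n)^{1/d}/n^{1/d}$ when $d\ge3$. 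The point of quoting \cite{W8L8} rather than \cite{LeightonShor,ShorYukich} directly is that the latter are stated for $D=(0,1)^d$ with uniform density and compare the random points to a deterministic grid; \cite{W8L8} carries this over to an arbitrary bounded connected Lipschitz domain $D$ and an arbitrary density $\rho$ bounded above and below, which is exactly the generality we need, and it converts the min-max over permutations into the $\infty$-transportation distance $d_\infty(\nu,\nu_n)$.

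Next I would pass from the ``with high probability for each fixed $n$'' statement to the almost sure $\limsup$ statement. Let $a_n = (\log n)^{3/4}/n^{1/2}$ if $d=2$ and $a_n=(\log n)^{1/d}/n^{1/d}$ if $d\ge3$, and fix a constant $C$ as above. Consider the events $B_n = \{ d_\infty(\nu,\nu_n) > C a_n \}$. The high-probability bounds give $\mathbb{P}(B_n)\le n^{-c_1(\log n)^{1/2}}$, and since $n^{-c_1(\log n)^{1/2}} = \exp(-c_1(\log n)^{3/2})$ is summable in $n$, the Borel--Cantelli lemma yields $\mathbb{P}(\limsup_n B_n)=0$. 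Hence for $\mathbb{P}$-a.e.\ $\omega$ there is $N(\omega)$ with $d_\infty(\nu,\nu_n)\le C a_n$ for all $n\ge N(\omega)$, which gives $\limsup_{n\to\infty} a_n^{-1} d_\infty(\nu,\nu_n)\le C$.

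Finally I would upgrade the bound on the distance $d_\infty(\nu,\nu_n)$ to the existence of an actual sequence of transportation maps realizing (up to a harmless constant) that distance. Since $\nu$ is absolutely continuous with respect to Lebesgue measure, the $\infty$-transportation problem between $\nu$ and $\nu_n$ admits an optimal map (this is in the literature on $\infty$-OT, e.g.\ as used in \cite{W8L8}); alternatively, for each $n$ one may pick a map $T_n$ with $\|Id-T_n\|_\infty \le 2 d_\infty(\nu,\nu_n)$, which changes only the constant $C$. Recording these maps and using the bound from the previous paragraph gives \eqref{InifinityTransportEstimate d=2} and \eqref{InifinityTransportEstimate d>2}. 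The only genuinely substantive input is the sharp matching estimate of Leighton--Shor / Shor--Yukich and its extension in \cite{W8L8}; I expect the main obstacle (already handled by citing \cite{W8L8}) to be precisely that extension from the unit cube with uniform density to general $D$ and $\rho$, since the deterministic-grid constructions do not transfer directly. Everything after that is the routine Borel--Cantelli argument and the standard existence of $\infty$-optimal transport maps out of an absolutely continuous measure.
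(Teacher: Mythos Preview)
Your proposal is correct and matches the paper's approach exactly: the paper does not give a proof of this theorem but simply states that the matching results of \cite{LeightonShor,ShorYukich}, extended in \cite{W8L8} to general Lipschitz domains and densities bounded above and below, ``combined with Borel--Cantelli lemma'' imply the statement. Your write-up is precisely the fleshing out of that sentence, including the passage from $d_\infty(\nu,\nu_n)$ to an actual transport map, so there is nothing to add.
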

\subsection{$\Gamma$-convergence on metric spaces.} \label{sec:gamma}
We recall and discuss the notion of $\Gamma$-convergence in general setting.
Let $(X,d_X)$ be a metric space. Let  $F_n: X \rightarrow[0,\infty] $ be a sequence of functionals. 
\begin{definition} \label{def:Gamma}
The sequence $\left\{ F_n \right\}_{n \in \mathbb{N}} $  $ \Gamma$-converges with respect to metric  $d_X$ to the functional $F: X \rightarrow  [0, \infty]$ as $n \rightarrow \infty$ if the following inequalities hold:
\begin{list1}
\item \textbf{Liminf inequality:} For every $x \in X$ and every sequence $\left\{ x_n \right\}_{n \in \mathbb{N}}$ converging to $x$,
\begin{equation*}
\liminf_{n \rightarrow \infty} F_n(x_n) \geq F(x),
\end{equation*}
\item  \textbf{Limsup inequality:} For every $x \in X$ there exists a sequence $\left\{ x_n \right\}_{n \in \N}$ converging to $x$ satisfying
\begin{equation*}
\limsup_{n \rightarrow \infty} F_n(x_n) \leq F(x).
\end{equation*}
\end{list1}
We say that $F$ is the $\Gamma$-limit of the sequence of functionals $\left\{F_n \right\}_{n \in \N}$ (with respect to the metric $d_X$). 
\label{defGamma}
\end{definition}
\begin{remark}
In most situations one does not prove the limsup inequality for all $x \in X$ directly. Instead, one proves the inequality for all $x$ in a dense subset $X'$ of $X$  where it is somewhat easier to prove, and then deduce from this that the inequality holds for all $x \in X$.  To be more precise, suppose that the limsup inequality is true for every $x$ in a subset $X'$ of $X$ and the set $X'$ is such that for every $x \in X$ there exists a sequence $\left\{ x_k \right\}_{k \in \N}$ in $X'$  converging to $x$ and such that $F(x_k) \rightarrow F(x)$ as $k \rightarrow \infty$, then the limsup inequality is true for  every $x \in X$. It is enough to use a diagonal argument to deduce this claim. 
\label{DenseGamma}
\end{remark}

\begin{definition}
We say that  the sequence of nonnegative functionals $\left\{ F_n\right\}_{n \in \mathbb{N}}$ satisfies the compactness property if the following holds:
Given $\left\{ n_k\right\}_{k \in \N}$ an increasing sequence of natural numbers and $\left\{x_k\right\}_{k \in \N}$ a bounded sequence in $X$ for which
\begin{equation*}
\sup_{k \in \N} F_{n_k}(x_k) < \infty
\end{equation*} 
 $\left\{ x_k \right\}_{k \in \N}$ is relatively compact in $X$.
\label{defCompac}
\end{definition}
\begin{remark}
Note that the boundedness assumption of $\left\{ x_k \right\}_{k \in \N}$ in the previous definition is a necessary condition for relative compactness and so it is not restrictive.
\end{remark}

The notion of $\Gamma$-convergence is particularly useful when the functionals $\left\{ F_n \right\}_{n \in \N}$ satisfy the compactness property. This is because it guarantees convergence of minimizers (or approximate minimizers) of $F_n$ to minimizers of $F$ and it also guarantees convergence of the minimum energy of $F_n$ to the minimum energy of $F$ (this statement is made precise in the next proposition). This is the reason why $\Gamma$-convergence is said to be a variational type of convergence.

\begin{proposition} \label{comp_gen}
Let $F_n : X \rightarrow [0, \infty]$ be a sequence of nonnegative functionals which are not identically equal to $+\infty$, satisfying the compactness property and $\Gamma$-converging to the functional $F: X \rightarrow [0, \infty]$ which is not identically equal to $+\infty$. Then, 

\begin{equation}
\lim_{n \rightarrow \infty} \inf_{x \in X}F_n(x) = \min_{x \in X} F(x).
\end{equation}
Furthermore
every bounded sequence $\left\{ x_n \right\}_{n \in \N}$  in $X$ for which 
\begin{equation}
\lim_{n \rightarrow \infty} \left(F_n(x_n) - \inf_{x \in X} F_n(x)  \right)= 0 
\label{AlmostMin}
\end{equation}
is relatively compact  and each of its cluster points is a minimizer of $F$. 

In particular, if $F$ has a unique minimizer, then a sequence $\left\{ x_n \right\}_{n \in \N}$ satisfying \eqref{AlmostMin} converges to the unique minimizer of $F$.
\label{VariationalGamma}
\end{proposition}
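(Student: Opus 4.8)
The plan is to prove the classical fundamental theorem of $\Gamma$-convergence. The argument has three movements: an easy upper bound for the infima, obtained from recovery sequences; a matching lower bound, together with the attainment of $\min_X F$, obtained by feeding an almost-minimizing sequence into the compactness property and the liminf inequality; and finally the uniqueness statement, which then follows formally. The only genuinely delicate point is the interplay of the compactness property with the liminf inequality in the second step; everything else is bookkeeping with $\liminf$ and $\limsup$.

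First I would establish $\limsup_{n\to\infty}\inf_X F_n\le\inf_X F$. Fix an arbitrary $x\in X$. By the limsup inequality of Definition \ref{def:Gamma} there is a sequence $x_n\to x$ with $\limsup_n F_n(x_n)\le F(x)$, and since $\inf_X F_n\le F_n(x_n)$ for every $n$ this forces $\limsup_n\inf_X F_n\le F(x)$. Taking the infimum over $x\in X$ gives $\limsup_n\inf_X F_n\le\inf_X F$, which is finite because $F\not\equiv+\infty$. In particular $\inf_X F_n<\infty$ for all large $n$, so almost-minimizing sequences are meaningful.

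Next comes the core of the argument. Let $\{x_n\}_{n\in\N}$ be a bounded sequence with $F_n(x_n)-\inf_X F_n\to 0$ (for definiteness one may take $F_n(x_n)\le\inf_X F_n+1/n$ for $n$ large). By the previous paragraph $\sup_n F_n(x_n)<\infty$, so the compactness property of Definition \ref{defCompac} shows that $\{x_n\}$ is relatively compact; let $x$ be any cluster point, say $x_{n_k}\to x$. Writing $F_{n_k}(x_{n_k})=\bigl(F_{n_k}(x_{n_k})-\inf_X F_{n_k}\bigr)+\inf_X F_{n_k}$ and using that the first summand tends to $0$, one gets $\liminf_k F_{n_k}(x_{n_k})=\liminf_k\inf_X F_{n_k}$; combining this with the liminf inequality $F(x)\le\liminf_k F_{n_k}(x_{n_k})$ and the bound of the previous paragraph yields $F(x)\le\liminf_k\inf_X F_{n_k}\le\inf_X F$. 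Since trivially $F(x)\ge\inf_X F$, we conclude $F(x)=\inf_X F$. This shows at once that $\min_X F$ is attained, that every cluster point of a bounded almost-minimizing sequence is a minimizer of $F$, and --- running the same compactness-and-liminf argument along a subsequence realizing $\liminf_n\inf_X F_n$ --- that $\liminf_n\inf_X F_n\ge\min_X F$, which together with the first step gives $\lim_n\inf_X F_n=\min_X F$.

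Finally, suppose $F$ has a unique minimizer $\bar x$. Any bounded sequence $\{x_n\}$ satisfying \eqref{AlmostMin} is relatively compact by the previous step, and all of its cluster points equal $\bar x$; hence $x_n\to\bar x$, for otherwise some subsequence would stay outside a fixed ball about $\bar x$ while remaining relatively compact, producing a cluster point different from $\bar x$, a contradiction. As noted, the one place that deserves care is that the almost-minimizing sequences must lie within the hypotheses of Definition \ref{defCompac}, that is, that they can be chosen bounded; this is automatic in the situations used in this paper, where the relevant sublevel sets are bounded (or an explicit volume constraint is present), and I would simply remark on it at that point.
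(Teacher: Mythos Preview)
The paper does not supply a proof of this proposition; it is stated as a standard fact about $\Gamma$-convergence and the reader is referred to Braides \cite{braides2002gamma} and Dal Maso \cite{DalMaso}. Your argument is the classical one found in those references and is correct: the limsup bound from recovery sequences, the liminf bound and attainment from compactness plus the liminf inequality, and the uniqueness consequence all go through as you describe.

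Your closing caveat about boundedness is well taken and worth keeping. As the proposition is phrased, the first conclusion asserts that $\min_X F$ is attained, but the compactness property of Definition~\ref{defCompac} only applies to \emph{bounded} sequences with bounded energy. So to deduce attainment one needs to know that almost-minimizers of $F_n$ can be chosen bounded in $X$; this is an implicit equi-coercivity assumption that the proposition, as stated, does not literally include. You are right that in every application in the paper this is automatic (the $TL^1$ norm of an almost-minimizer is controlled by the constraint or by a lower-order term), and flagging it once is the appropriate way to handle it.
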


One can extend the concept of $\Gamma$-convergence to families of functionals indexed by real numbers in a simple way, namely, the family of functionals $\left\{ F_h \right\}_{h >0}$ is said to  $\Gamma $-converge to $F$ as $h \rightarrow 0$ if for every sequence $\left\{ h_n \right\}_{n \in \N}$ with $h_n \rightarrow 0$ as $n \rightarrow \infty$ the sequence $ \left\{ F_{h_n} \right\}_{n \in \N}$ $\Gamma$-converges to the functional $F$ as $n \rightarrow \infty$. Similarly one can define the compactness property for the functionals $\left\{ F_h \right\}_{h >0}$. For more on the notion of $\Gamma$-convergence see \cite{braides2002gamma} or
\cite{DalMaso}.

Since the functionals we are most interested in depend on data (and hence are random), we need to define what it means for a sequence of random functionals to $\Gamma$-converge to a deterministic functional. 
\begin{definition} \label{def:Gammarand}
Let $(\Omega , \mathcal{F}, \mathbb{P})$ be a probability space. For $\left\{ F_n\right\}_{n \in \N}$ a sequence of (random) functionals $F_n: X \times \Omega \rightarrow [0, \infty] $ and $F$ a (deterministic) functional $F : X \rightarrow [0, \infty]$, we say that the sequence of functionals $\left\{ F_n \right\}_{n \in \N}$ $\Gamma$-converges (in the $d_X$ metric) to $F$, if for $\mathbb{P}$-almost every $\omega \in \Omega$  the sequence $\left\{ F_n (\cdot, \omega) \right\}_{n \in \N}$ $\Gamma$-converges to $F$ according to Definition \ref{defGamma}. Similarly, we say that $\left\{ F_n  \right\}_{n \in \N}$ satisfies the compactness property if for $\mathbb{P}$-almost every $\omega \in \Omega$, $\left\{ F_n (\cdot, \omega) \right\}_{n \in \N}$ satisfies the compactness property according to Definition \ref{defCompac}.
\end{definition}

We do not explicitly write the dependence of $F_n$ on $\omega$ understanding that we are always working with a fixed value $\omega \in \Omega$, and hence with a deterministic functional.

%%%%%%%%%%%%%%%%%%%%%%%%%%%%%%%%%%%%%%%%%%%%%%%%%%%%
\section{The space $TL^p$}
\label{TLp}

In this section, $D$ denotes an open and bounded domain in $\R^d$.  Consider the set 
\[ TL^p(D) := \{ (\mu, f) \; : \:  \mu \in \mathcal P(D), \, f \in L^p(D, \mu) \}. \]
For $(\mu,f)$ and $(\nu,g)$ in $TL^p$ we define $d_{TL^p}((\mu,f), (\nu,g))$ by
\begin{align} \label{tlpmetric}
\begin{split}
 d_{TL^p}((\mu,f), (\nu,g))=
   \inf_{\pi \in \Gamma(\mu, \nu)} \left(  \iint_{D \times D} |x-y|^p + |f(x)-g(y)|^p  d\pi(x,y)  \right)^{1/p}.
\end{split}
\end{align}

\begin{remark}
We remark that formally $TL^p$ is a fiber bundle over $\mathcal P(D)$. Namely if one considers 
the Finsler (Riemannian for $p=2$) manifold structure on 
$\mathcal P(D)$ provided by the $p-OT$ metric (see \cite{Agueh} for general $p$ and \cite{ambrosio2008gradient,Otto01} for $p=2$) then $TL^p$ is, formally, a fiber bundle. 
\end{remark}

In order to prove that $d_{TL^p}$ is a metric, we remark that $d_{TL^p}$ is equal to a transportation distance between graphs of functions. To make this idea precise, let $\mathcal{P}_p(D \times \R)$ be the space of Borel probability measures on the product space $D \times \R$ whose $p$-moment is finite. We consider the map 
$$  (\mu, f) \in TL^p \longmapsto (Id \times f)_\sharp \mu\in \mathcal{P}_p(D \times \R),  $$
which allows us to identify an element  $(\mu, f ) \in TL^p$ with a measure in the product space $D \times \R$ whose support is contained in the graph of $f$. 

For $\gamma, \tilde{\gamma} \in \mathcal{P}_p(D \times \R)  $ let $\textbf{d}_p(\gamma, \tilde{\gamma})$ be given by

$$ \left(\textbf{d}_p(\gamma, \tilde{\gamma}) \right)^{p} = \inf_{\pi \in \Gamma(\gamma, \tilde{\gamma})} \iint_{(D \times \R) \times (D \times \R)} |x-y|^p +|s-t|^p  d \pi((x,s), (y,t)).  $$

\begin{remark}
We remark that $\textbf{d}_p$ is a distance on $\mathcal{P}_p(D \times \R)$ and that it is equivalent to the  $p$-OT distance $d_p$ introduced in Section \ref{TT} (the domain being $D \times \R$). Moreover, when $p=2$ these two distances are actually equal.
\label{remarkdp}
\end{remark}

Using the identification of elements in $TL^p$ with probability measures in the product space $D \times \R$ we have the following.

\begin{proposition}
Let $(\mu,f), (\nu,g) \in TL^p$. Then, $d_{TL^p}((\mu,f), (\nu,g)) = \textbf{d}_p((\mu,f), (\nu,g))$. 
\label{PropTLpandProductSpace}
\end{proposition}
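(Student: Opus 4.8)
The plan is to show that the two quantities $d_{TL^p}((\mu,f),(\nu,g))$ and $\mathbf{d}_p((\mu,f),(\nu,g))$ — where on the right we identify $(\mu,f)$ with $(\mathrm{Id}\times f)_\sharp\mu$ and $(\nu,g)$ with $(\mathrm{Id}\times g)_\sharp\nu$ — coincide by proving the two inequalities separately. Throughout, write $\gamma := (\mathrm{Id}\times f)_\sharp\mu$ and $\tilde\gamma := (\mathrm{Id}\times g)_\sharp\nu$, measures on $D\times\R$ supported in the graphs of $f$ and $g$ respectively.

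First I would prove $\mathbf{d}_p \le d_{TL^p}$. Given any coupling $\pi\in\Gamma(\mu,\nu)$, push it forward to $D\times\R\times D\times\R$ via the map $(x,y)\mapsto\bigl((x,f(x)),(y,g(y))\bigr)$; call the result $\Pi$. One checks from the change-of-variables formula \eqref{chofvar} that the first marginal of $\Pi$ is $(\mathrm{Id}\times f)_\sharp\mu=\gamma$ and the second is $(\mathrm{Id}\times g)_\sharp\nu=\tilde\gamma$, so $\Pi\in\Gamma(\gamma,\tilde\gamma)$. Evaluating the cost $|x-y|^p+|s-t|^p$ against $\Pi$ gives exactly $\iint |x-y|^p+|f(x)-g(y)|^p\,d\pi(x,y)$, so taking the infimum over $\pi$ yields $\mathbf{d}_p^p \le d_{TL^p}^p$.

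For the reverse inequality $d_{TL^p}\le\mathbf{d}_p$, start from an optimal (or near-optimal) coupling $\Pi\in\Gamma(\gamma,\tilde\gamma)$ on $(D\times\R)\times(D\times\R)$; such an optimizer exists since $\mathbf{d}_p$ is a genuine $p$-OT distance on $D\times\R$ (Remark \ref{remarkdp}, invoking the standard existence theory as in Section \ref{TT}). Because $\gamma$ is concentrated on the graph of $f$ and $\tilde\gamma$ on the graph of $g$, the measure $\Pi$ is concentrated on $\{(x,s,y,t): s=f(x),\ t=g(y)\}$. Let $\pi$ be the pushforward of $\Pi$ under the projection $(x,s,y,t)\mapsto(x,y)$; its marginals are the $x$- and $y$-marginals of $\gamma$ and $\tilde\gamma$, i.e. $\mu$ and $\nu$, so $\pi\in\Gamma(\mu,\nu)$. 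Using that $s=f(x)$ and $t=g(y)$ $\Pi$-a.e., the cost $\iint|x-y|^p+|s-t|^p\,d\Pi$ equals $\iint|x-y|^p+|f(x)-g(y)|^p\,d\pi$, which bounds $d_{TL^p}^p$ from above. Hence $d_{TL^p}^p\le\mathbf{d}_p^p$, and combined with the first step the equality follows.

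The only delicate point is the measure-theoretic bookkeeping in the second step: verifying that $\Pi$ really is concentrated on the set where $s=f(x)$ and $t=g(y)$ (this is where one uses that $f\in L^p(\mu)$ and $g\in L^p(\nu)$ are defined $\mu$- resp. $\nu$-a.e., so the graphs carry full mass), and that the projection $\pi$ has the correct marginals. Everything else — existence of optimizers, the change-of-variables identities — is standard and already recalled in Section \ref{TT}. I do not expect any genuine obstacle; the statement is essentially a repackaging of the definition of $d_{TL^p}$ as a transportation distance between the graphs.
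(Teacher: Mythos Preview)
Your proposal is correct and follows essentially the same approach as the paper: both directions are obtained by passing between couplings $\pi\in\Gamma(\mu,\nu)$ and couplings $\Pi\in\Gamma(\gamma,\tilde\gamma)$ via pushforward by $(x,y)\mapsto((x,f(x)),(y,g(y)))$ in one direction and projection $(x,s,y,t)\mapsto(x,y)$ in the other, together with the observation that couplings of the graph measures are concentrated on the product of the graphs. The only cosmetic difference is that the paper works with arbitrary couplings on both sides (avoiding any appeal to existence of optimizers), whereas you invoke an optimal or near-optimal $\Pi$ for the inequality $d_{TL^p}\le\mathbf{d}_p$; either variant is fine.
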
 
\begin{proof}
To see this, note that for every $\pi \in \Gamma((\mu,f), (\nu,g))$, it is true that the support of $\pi$ is contained in the product of the graphs of $f$ and $g$. In particular, we can write

\begin{equation}
 \iint_{(D \times \R) \times (D \times \R)} |x-y|^p +|s-t|^p  d \pi((x,s), (y,t)) =   \iint_{D  \times D  } |x-y|^p +|f(x)-g(y)|^p   d\tilde{\pi}(x,y),   
 \label{EquationTLpandProductSpace}
\end{equation}
where $\tilde{\pi} \in \Gamma(\mu,\nu)$. The right hand side of the previous expression is greater than $d_{TL^p}((\mu,f), (\nu,g))$, which together with the fact that $\pi$ was arbitrary allows us to conclude that $\textbf{d}_p((\mu,f),(\nu,g)) \geq d_{TL^p}((\mu,f),(\nu,g))$. 
To obtain the opposite inequality, it is enough to  notice that for an arbitrary coupling $ \tilde{\pi} \in \Gamma(\mu,\nu)$, we can consider the measure $\pi := ((Id \times f) \times(Id \times g))_\sharp \tilde{\pi}$ which belongs to $\Gamma((\mu,f), (\nu,g))$. Then, equation \eqref{EquationTLpandProductSpace} holds and its left hand side is greater than $d_{TL^p}((\mu,f), (\nu,g))$. The fact that $\tilde{\pi}$ was arbitrary allows us to conclude the opposite inequality.
\end{proof}

\begin{remark}
Proposition \ref{PropTLpandProductSpace} and Remark \ref{remarkdp} imply that $(TL^p, d_{TL^p})$ is a metric space. 
\end{remark}

\begin{remark}
\label{TLpNotcomplete}
We remark that the metric space $(TL^p, d_{TL^p})$ is not complete. To illustrate this, let us consider $D=(0,1)$. Let $\mu$ be the Lebesgue measure on $D$ and define $f_{n+1}(x):= \sign \sin(2^n \pi x)$ for $x \in (0,1)$. Then, it can be shown that $d_{TL^p}((\mu,f_n), (\mu,f_{n+1})) \leq 1/2^n$. This implies that the sequence $\left\{ (\mu, f_n) \right\}_{n \in \N}$ is a Cauchy sequence in $(TL^p, d_{TL^p})$. However, if this was a convergent sequence, in particular it would have to converge to an element of the form $(\mu, f)$ (see Proposition \ref{EquivalenceTLp} below). But then, by Remark \ref{remarkTLpandLp}, it would be true that $f_n \converges{L^p(\mu)} f$.  This is impossible because $\left\{f_n \right\}_{n \in \N}$ is not a convergent sequence in $L^p(\mu)$. 
\end{remark}

\begin{remark}
\label{CompletionTLp}
The completion of the metric space $(TL^p,d_{TL^p})$ is the  space $(\mathcal{P}_p(D \times \R), \textbf{d}_p)$. In fact, in order to show this, it is enough to show that $TL^p$ is dense in $(\mathcal{P}_p(D \times \R), \textbf{d}_p)$. Since the class of convex combinations of Dirac delta masses is dense in $(\mathcal{P}_p(D \times \R), \textbf{d}_p)$, it is enough to show that every convex combination of Dirac deltas can be approximated by elements in $TL^p$. So let us consider $\delta \in \mathcal{P}_p(D \times \R)$ of the form
$$ \delta= \sum_{i=1}^{m} \sum_{j=1}^{l_i} a_{ij} \delta_{(x_i,t_j^i)} , $$
where $x_1, \dots, x_n $ are $n$ points in $D$; $t_i^j \in \R$ ; $a_{ij }>0$  and $\sum_{i=1}^{m} \sum_{j=1}^{l_i}a_{ij}=1$.
Now, for every $n \in \N$ and for every $i=1, \dots, m$ choose  $r_i^n >0$ such that for all $i$: $B(x_i,r_i^n) \subseteq D$ and for all $k \not= i$, $B(x_i,r_i^n) \cap B(x_k,r_k^n)= \emptyset$ and such that  $(\forall i)$ $r_i^n \leq \frac{1}{n}$.

For $i=1, \dots,m$ consider $y_{1}^{i,n}, \dots, y_{l_i}^{i,n}$  a collection of $l_i$ points in $B(x_i,r_i^n)$. We define the function $f_n: D \rightarrow \R$ given by $f^n(x)= t_i^j$ if $x= y_j^{i,n}$ for some $i,j$ and $f_n(x)=0$ if not. 

Finally, we define the measure $\mu_n \in \mathcal{P}(D)$ by
$$  \mu_n = \sum_{i=1}^{m}\sum_{j=1}^{l_i}a_{ij} \delta_{y_j^{i,n}}.  $$
It is straightforward to check that  $(\mu_n, f_n) \converges{\textbf{d}_p} \delta$. 
\end{remark}

\begin{remark}
\label{remarkFiberCompletion}
Here we make a connection between $TL^p$ spaces and Young measures.
Consider a fiber of $TL^p$ over $\mu \in \mathcal{P}(D)$, that is, consider
$$TL_p \llcorner_{\mu} := \left\{ (\mu,f) \: : \: f \in L^p(\mu) \right\}.$$
Let $\Proj_1: D \times \R \mapsto D$ be defined by $\Proj_1(x,t)=x$ and let
$$ \mathcal{P}_p(D \times \R) \llcorner_{\mu} := \left\{ \gamma \in \mathcal{P}_p(D \times \R) \: : \: {\Proj_1} _{\sharp} \gamma = \mu  \right\}.  $$

Thanks to the disintegration theorem (see Theorem 5.3.1 in \cite{ambrosio2008gradient} ), the set $\mathcal{P}_p(D \times \R)\llcorner_\mu$ can be identified with the set of Young measures (or parametrized measures), with finite $p$-moment which have $\mu$ as base distribution (see \cite{Pedregal}, \cite{YoungMeasuresOnTopSpaces}). 
It is straightforward to check that $\mathcal{P}_p(D \times \R) \llcorner_{\mu}$ is a closed  subset (in the $\textbf{d}_p$ sense) of $\mathcal{P}_p(D \times \R)$. Hence, the closure of $TL_p \llcorner_{\mu}$ in $\mathcal{P}_p(D \times \R)$ is contained in $\mathcal{P}_p(D \times \R) \llcorner_{\mu}$, that is,
$$ \closure{TL^p \llcorner_\mu}  \subseteq \mathcal{P}_p(D \times \R) \llcorner_{\mu}.   $$
In general the inclusion may be strict. For example if we let $D =(-1,1)$ and consider $\mu= \delta_0$ to be the Dirac delta measure at zero, then it is straightforward to check that  $TL^p \llcorner_\mu $ is actually a closed subset of $\mathcal{P}_p(D \times \R)$ and that $TL^p \llcorner_\mu \subsetneq \mathcal{P}_p(D \times \R) \llcorner_\mu $. 
On the other hand, if the measure $\mu$ is  absolutely continuous with respect to the Lebesgue measure, then
the closure of $TL_p \llcorner_{\mu}$ is indeed $\mathcal{P}_p(D \times \R) \llcorner_{\mu}$.  This fact follows from Theorem 2.4.3 in \cite{YoungMeasuresOnTopSpaces}. Here we present a simple proof of this fact using the ideas introduced in the preliminaries. Note that it is enough to show that $TL^p \llcorner_\mu$ is dense in $\mathcal{P}_p(D \times \R)\llcorner _\mu$. So let $\gamma \in \mathcal{P}_p(D \times \R) \llcorner_\mu$. By Remark \ref{CompletionTLp}, there exists a sequence $\left\{((\mu_n, f_n) \right\}_{n \in \N} \subseteq TL^p$ such that
$$  (\mu_n, f_n) \converges{\textbf{d}_p} \gamma.  $$
In particular,
$$ \mu_n \converges{d_p} \mu.   $$
Since $\mu$ is absolutely continuous with respect to the Lebesgue measure, for every $n \in \N$ there exists a transportation map $T_n: D \rightarrow D$ with ${T_n}_{\sharp}\mu = \mu_n$, such that 
$$ \int_{D}| x- T_n(x)|^p d\mu(x) = (d_p(\mu,\mu_n))^p \rightarrow 0,\:  \: \text{as } n \rightarrow \infty.  $$ 
On the other hand, the transportation map $T_n$ induces the transportation plan $\pi_{T_n} \in \Gamma(\mu, \mu_n)$ defined in \eqref{TransportationMapPlan1}. Hence,
\begin{equation*}
\begin{split}
(\textbf{d}_p((\mu,f_n\circ T_n) , (\mu_n,f_n)))^p &= (d_{TL^p}((\mu,f_n \circ T_n),(\mu_n, f_n)))^p 
\\&\leq \int_{D\times D}|x- y|^pd\pi_{T_n}(x,y) + \int_{D \times D}|f_n\circ T_n (x) - f_n(y)|^p d \pi_{T_n}(x,y)
\\&= \int_{D}|x- T_n(x)|^pd\mu(x). 
\end{split}
\end{equation*}
From the previous computations, we deduce that $(\textbf{d}_p((\mu,f_n\circ T_n) , (\mu_n,f_n)) \rightarrow 0$ as $n \rightarrow \infty$, and thus $(\mu,f_n\circ T_n) \converges{\textbf{d}_p} \gamma$. This shows that $TL^p\llcorner_\mu$ is dense in $\mathcal{P}_p(D \times \R) \llcorner_\mu$, and given that $\mathcal{P}_p(D \times \R) \llcorner_{\mu}$ is a closed subset of $\mathcal{P}_p(D \times \R)$, we conclude that $\closure{TL^p \llcorner_\mu} = \mathcal{P}_p(D \times \R) \llcorner_{\mu}$.

\end{remark}

%
%\begin{remark}
%
%Consider a fiber of $TL^p$, over $\mu \in \mathcal{P}(D)$ which is absolutely continuous with respect to Lebesgue measure. That is, consider the set
%$$TL_p \llcorner_{\mu} := \left\{ (\mu,f) \: : \: f \in L^p(\mu) \right\}.$$
%Let 
%$$\Proj_1: (x,t) \in D \times \R  \mapsto x,$$
%and consider
%$$ \mathcal{P}_p(D \times \R) \llcorner_{\mu} := \left\{ \gamma \in \mathcal{P}_p(D \times \R) \: : \: {\Proj_1} _{\sharp} \gamma = \mu  \right\}.  $$
%Then, it is straightforward to check that the completion of the metric space $(TL_p \llcorner_{\mu}, d_{TL^p})$ is precisely the space $(\mathcal{P}_p(D \times \R) \llcorner_{\mu} , \textbf{d}_p)$. 
%
%\end{remark}

%\begin{remark}
%Given that $TL^1$ is not complete it is important to remark that in the proof of Theorem \ref{compact} we show convergence of sequences and we don't rely on the Cauchy property of a sequence to conclude its convergence (given that $TL^1$ is not complete this would be incorrect).
%\end{remark}
\nc

%We also remark that one could also change the set and consider a metric where the powers of the terms in \eqref{tlpmetric} would be different ($p$ and $q$, instead of $p$ and $p$ and the natural name for the space in this case would be $TL^{p,q}$).

\nc

\begin{remark}
If one restricts the attention to measures $\mu, \nu \in \mathcal{P}(D)$ which are absolutely continuous with respect to the Lebesgue measure then 
\[ \inf_{T \::\: T_\sharp \mu = \nu} \left(\int_D |x - T(x)|^p + |f(x) - g(T(x))|^p d \mu(x) \right)^{\frac{1}{p}} \]
 majorizes $d_{TL^p}((\mu,f), (\nu,g)) $ and furthermore provides a metric (on the subset of $TL^p$)
 which gives the same topology as $d_{TL^p}$.  The fact that these topologies are the same follows from 
Proposition \ref{EquivalenceTLp}.
\end{remark}

\begin{remark}
\label{remarkTLpandLp}
One can think of the convergence in $TL^p$ as a generalization of weak convergence of measures and of $L^p$ convergence  of functions. That is $\left\{ \mu_n \right\}_{n \in \N}$ in $\mathcal{P}(D)$ converges weakly to $\mu\in \mathcal{P}(D)$ if and only if $ \left(\mu_n , 1 \right) \overset{{TL^p}}{\longrightarrow} (\mu, 1)$  as $n \rightarrow \infty$
(which follows from the fact that on bounded sets  p-OT metric metrizes the weak convergence of measures \cite{ambrosio2008gradient}),  and that for $\mu \in \mathcal{P}(D)$ a sequence $\left\{ f_n \right\}_{n\in \N}$ in $L^p(\mu)$ converges in $L^p(\mu)$ to $f$ if and only if $(\mu, f_n) \overset{{TL^p}}{\longrightarrow} (\mu,f)$ as $n \rightarrow \infty$. The last fact is established in Proposition \ref{EquivalenceTLp}.
\end{remark}

We wish to establish a simple characterization for the convergence in the space $TL^p$. For this, we need first the following two lemmas. 
\begin{lemma}
Let $\mu\in \mathcal{P}(D)$ and let $\pi_n \in \Gamma(\mu,\mu)$ for all $n \in \N$.
If $\left\{ \pi_n \right\}_{n \in \N}$, is a stagnating sequence of transportation plans, then for 
any $u \in L^p(\mu)$
\begin{equation*}
\lim_{n \rightarrow \infty} \iint_{D \times D}|u(x)-u(y)|^p d \pi_n(x,y) =0.
\end{equation*}
\label{LemmaTLp1}
\end{lemma}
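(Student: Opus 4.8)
The plan is to reduce the statement to the case of continuous $u$ by density, and then use the characterization of stagnating sequences in terms of weak convergence of the plans. First I would recall from the preliminaries that, since $D$ is bounded, a sequence of plans $\pi_n \in \Gamma(\mu,\mu)$ is stagnating if and only if $\pi_n \overset{w}{\longrightarrow} (\id\times\id)_\sharp \mu =: \pi_\infty$ in $\mathcal{P}(D\times D)$. So the task is to show $\iint |u(x)-u(y)|^p \, d\pi_n \to 0$, noting that the limiting integrand vanishes $\pi_\infty$-a.e. since $\pi_\infty$ is concentrated on the diagonal.

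The main step is the approximation argument. Fix $\delta>0$ and, using that $C(\closure D)$ is dense in $L^p(\mu)$ (here I use that $\mu$ is a finite measure on a bounded set), choose $v \in C(\closure D)$ with $\|u-v\|_{L^p(\mu)} < \delta$. Then split using the triangle inequality in $L^p(\pi_n)$:
\begin{equation*}
\left(\iint_{D\times D} |u(x)-u(y)|^p \, d\pi_n \right)^{1/p} \leq \left(\iint |u(x)-v(x)|^p d\pi_n\right)^{1/p} + \left(\iint |v(x)-v(y)|^p d\pi_n \right)^{1/p} + \left(\iint |v(y)-u(y)|^p d\pi_n\right)^{1/p}.
\end{equation*}
Since $\pi_n$ has both marginals equal to $\mu$, the first and third terms each equal $\|u-v\|_{L^p(\mu)} < \delta$, regardless of $n$. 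For the middle term, the function $(x,y)\mapsto |v(x)-v(y)|^p$ is continuous and bounded on the compact set $\closure D \times \closure D$; hence by weak convergence $\iint |v(x)-v(y)|^p d\pi_n \to \iint |v(x)-v(y)|^p d\pi_\infty = 0$, the last equality because $\pi_\infty$ lives on the diagonal where the integrand is zero. Therefore $\limsup_{n\to\infty} \left(\iint |u(x)-u(y)|^p d\pi_n\right)^{1/p} \leq 2\delta$, and letting $\delta\to 0$ finishes the proof.

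The only mild subtlety — the part I would be most careful about — is the use of the weak convergence characterization of stagnating sequences together with the requirement that the test function be bounded and continuous on the closed domain; one should make sure the integrand $|v(x)-v(y)|^p$ is genuinely bounded, which holds since $v$ is continuous on the compact closure $\closure D$. Everything else (the triangle inequality in $L^p(\pi_n)$, the marginal computation) is routine. If one prefers to avoid invoking the weak-convergence reformulation, an entirely elementary alternative for the middle term is to use uniform continuity of $v$: given $\delta$ pick $r>0$ with $|v(x)-v(y)|<\delta$ whenever $|x-y|<r$, bound $\iint |v(x)-v(y)|^p d\pi_n \leq \delta^p + (2\|v\|_\infty)^p \pi_n(\{|x-y|\geq r\})$, and control the last measure by Markov's inequality and $\iint |x-y|\,d\pi_n \to 0$.
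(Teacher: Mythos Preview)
Your proof is correct and follows essentially the same approach as the paper's: approximate $u$ by a nice function $v$, use the marginal condition to control the two outer terms, and handle the middle term separately. The only minor difference is that the paper approximates by a Lipschitz bounded function and bounds the middle term directly by $\Lip(v)\iint |x-y|\,d\pi_n$, whereas you approximate by a continuous function and invoke weak convergence (your alternative via uniform continuity and Markov's inequality is even closer to the paper's argument).
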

\begin{proof}
 We prove the case $p=1$ since the other cases are similar. Let $u \in L^1(\mu)$ and let $\left\{\pi_n \right\}_{n \in \N}$ be a stagnating sequence of transportation maps with $\pi_n \in \Gamma(\mu, \mu)$.  Since the probability measure $\mu$ is inner regular, we know that the class of Lipschitz and bounded functions on $D$ is dense in $L^1(\mu)$. Fix $\veps>0$, we know there exists a function $v: D \rightarrow \R $ which is Lipschitz and bounded and for which:
 \begin{equation*}
 \int_{D}| u(x) - v(x)| d \mu(x) < \frac{\veps}{3}.
 \end{equation*}
Note that:
 \begin{equation*}
 \iint_{D \times D}| v(x) - v(y)| d \pi_n(x,y)  \leq \Lip(v) \iint_{D \times D}|x-y| d\pi_n(x,y) \rightarrow 0,  \te{ as } n \rightarrow \infty.
 \end{equation*}
Hence we can find $N \in \N$ such that if $n \geq N$ then $ \iint_{D \times D}| v(x) - v(y)| d \pi_n(x,y) < \frac{\veps}{3}$. Therefore, for $n \geq N$, using the triangle inequality, we obtain
\begin{align*}
\iint_{D \times D}| u(x) - u(y) |  d \pi_n(x,y)  \leq  &    \iint_{D \times D}| u(x) - v(x) | d \pi_n(x,y)  \\
& +  \iint_{D \times D}| v(x) - v(y) | d \pi_n(x,y)   + \iint_{D \times D}| v(y) - u(y) | d \pi_n(x,y) \\
= &\, 2\int_{D}| v(x)- u(x)| d \mu(x) +  \iint_{D \times D}| v(x) - v(y) | d \pi_n(x,y) < \veps.
\end{align*}
This proves the result.
\end{proof}

\begin{lemma}
Suppose that the sequence $\left\{\mu_n \right\}_{n \in \N}$ in $\mathcal{P}(D)$ converges weakly to $\mu \in \mathcal{P}(D)$. Let $\left\{ u_n\right\}_{n \in \N}$ be a sequence with $u_n \in L^p(\mu_n)$ and let $u \in L^p(\mu)$. Consider two sequences of stagnating transportation plans $\left\{ \pi_n \right\}_{n \in \N}$ and $\left\{ \hat{\pi}_n\right\}_{n \in \N}$ (with  $\pi_n, \hat{\pi}_n \in \Gamma(\mu , \mu_n)$). Then:
\begin{align}
 \lim_{n \rightarrow \infty}  \iint_{D \times D}| u(x) - u_n(y)|^p d \pi_n(x,y)  =  0  \: \Leftrightarrow\: \lim_{n \rightarrow \infty}  \iint_{D \times D}| u(x) - u_n(y)|^p d \hat{\pi}_n(x,y)  = 0 
\end{align}
\label{LemmaEquivalenceTLp}
\end{lemma}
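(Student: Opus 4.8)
The plan is to exploit the gluing/composition of transportation plans recalled in Subsection \ref{TT} together with Lemma \ref{LemmaTLp1}. Since the statement is symmetric in $\pi_n$ and $\hat\pi_n$, it suffices to prove one implication, say that $\iint_{D\times D}|u(x)-u_n(y)|^p\,d\pi_n \to 0$ forces $\iint_{D\times D}|u(x)-u_n(y)|^p\,d\hat\pi_n \to 0$.

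First I would glue, for each $n$, the plan $\pi_n\in\Gamma(\mu,\mu_n)$ with the inverse plan $\hat\pi_n^{-1}\in\Gamma(\mu_n,\mu)$ (see \eqref{Inverse}): by the construction recalled after \eqref{Inverse} there is a measure $\bm{\pi}_n$ on $D\times D\times D$ whose projection onto the first two coordinates is $\pi_n$ and whose projection onto the last two coordinates is $\hat\pi_n^{-1}$. Let $\tau_n:=\hat\pi_n^{-1}\circ\pi_n\in\Gamma(\mu,\mu)$ be the induced composition, i.e. the projection of $\bm{\pi}_n$ onto the first and third coordinates. The essential point is that \emph{$\tau_n$ is a plan from $\mu$ to itself}, so Lemma \ref{LemmaTLp1} applies to it once it is checked to be stagnating. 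Writing $(x_1,x_2,x_3)$ for the coordinates on $D^3$, stagnation follows from the triangle inequality under the integral:
\[ \iint_{D\times D}|x-z|\,d\tau_n(x,z)=\int_{D^3}|x_1-x_3|\,d\bm{\pi}_n \le \int_{D^3}|x_1-x_2|\,d\bm{\pi}_n+\int_{D^3}|x_2-x_3|\,d\bm{\pi}_n, \]
and the two terms on the right are $\iint_{D\times D}|x-y|\,d\pi_n$ and $\iint_{D\times D}|x-y|\,d\hat\pi_n$ (the latter since $\hat\pi_n$ and $\hat\pi_n^{-1}$ yield the same value), both of which tend to $0$ because $\pi_n$ and $\hat\pi_n$ are stagnating. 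Hence Lemma \ref{LemmaTLp1} gives $\int_{D^3}|u(x_1)-u(x_3)|^p\,d\bm{\pi}_n\to 0$.

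Next I would note that the projection of $\bm{\pi}_n$ onto the third and second coordinates is precisely $\hat\pi_n\in\Gamma(\mu,\mu_n)$, so that
\[ \iint_{D\times D}|u(x)-u_n(y)|^p\,d\hat\pi_n(x,y)=\int_{D^3}|u(x_3)-u_n(x_2)|^p\,d\bm{\pi}_n. \]
Applying the elementary inequality $|a-b|^p\le 2^{p-1}(|a-c|^p+|c-b|^p)$ with $a=u(x_3)$, $c=u(x_1)$, $b=u_n(x_2)$ and integrating against $\bm{\pi}_n$, the right-hand side is at most $2^{p-1}$ times the sum of $\int_{D^3}|u(x_3)-u(x_1)|^p\,d\bm{\pi}_n$, which tends to $0$ by the previous step, and $\int_{D^3}|u(x_1)-u_n(x_2)|^p\,d\bm{\pi}_n=\iint_{D\times D}|u(x)-u_n(y)|^p\,d\pi_n$, which tends to $0$ by hypothesis. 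This proves the implication, and swapping the roles of $\pi_n$ and $\hat\pi_n$ gives the converse, completing the proof.

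The only real subtlety is the bookkeeping with the gluing of plans: one must arrange $\bm{\pi}_n$ so that the composed plan $\tau_n$ has \emph{both} marginals equal to the fixed measure $\mu$ (this is why one glues $\pi_n$ with $\hat\pi_n^{-1}$ rather than, say, working directly with $\mu_n$-to-$\mu_n$ plans, since Lemma \ref{LemmaTLp1} is stated for a single fixed measure), and then correctly identify the various two-coordinate projections of $\bm{\pi}_n$. Everything else reduces to the triangle inequality and the convexity bound $|a-b|^p\le 2^{p-1}(|a-c|^p+|c-b|^p)$.
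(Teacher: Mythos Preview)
Your proof is correct and follows essentially the same route as the paper: glue $\pi_n$ with $\hat\pi_n^{-1}$, verify that the composed plan $\hat\pi_n^{-1}\circ\pi_n\in\Gamma(\mu,\mu)$ is stagnating, and invoke Lemma \ref{LemmaTLp1}. The only cosmetic difference is that the paper (writing out the case $p=1$) uses the reverse triangle inequality to bound the \emph{difference} of the two integrals directly, whereas you bound one integral by the other via $|a-b|^p\le 2^{p-1}(|a-c|^p+|c-b|^p)$; both arguments are equivalent and your formulation has the advantage of handling general $p$ uniformly.
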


\begin{proof}
We present the details for $p=1$, as the other cases are similar. Take $\hat{\pi}_{n}^{-1} \in \Gamma(\mu_n , \mu) $ the inverse of $\hat{\pi}_n$ defined in \eqref{Inverse}. We can consider  $\bm{\pi_n} \in \mathcal{P}(D \times D \times D)$ as the measure mentioned at the end of Subsection \ref{TT} (taking $ \pi_{23}=\hat{\pi}_n^{-1}$ and $ \pi_{12} =\pi_n$). In particular $\hat{\pi}_{n}^{-1} \circ \pi_n \in \Gamma(\mu, \mu)$.  Then
\begin{equation*}
\iint_{D \times D} |u_n(y) - u(x)| d \pi_n(x,y) = \iiint_{D \times D \times D} |u_n(y)- u(x)| d \bm{\pi}_n(x,y,z),
\end{equation*}
and
\begin{align*}
\iint_{D \times D} |u_n(z) - u(y)| d \hat{\pi}_n(y , z) = & \iint_{D \times D} |u_n(y) - u(z)| d \hat{\pi}_n^{-1}(y , z) \\
=& \iiint_{D \times D \times D} |u_n(y)- u(z)| d \bm{\pi}_n(x,y,z),
\end{align*}
which imply after using the triangle inequality:
\begin{align}
\begin{split}
& \left| \iint_{D \times D} \right.   |u_n(y) -   u(x)| d \pi_n(x,y)  -   \left.\iint_{D \times D} |u(z) - u_n(y)| d \hat{\pi}_n(y , z) \right| \\
 & \qquad \leq  \iiint_{D \times D \times D} |u(z)- u(x)| d \bm{\pi}_n(x,y,z) =  \iint_{D \times D} |u(z)- u(x)| d\hat{\pi}_{n}^{-1} \circ \pi_n(x,z).
 \end{split}
 \label{eqn1TLp}
\end{align}
Finally note that :
\begin{equation*}
\iint_{D \times D}|x-z| d \hat{\pi}_{n}^{-1} \circ \pi_n(x,z)  \leq  \iint_{D \times D}|x-y| d \pi_n(x,y) + \iint_{D \times D}|y-z| d \hat{\pi}_n(z,y)   \rightarrow 0, \end{equation*}
 as $n \rightarrow \infty$.
The sequence $\left\{ \hat{\pi}_{n}^{-1} \circ \pi_n \right\}_{n \in \N}$ satisfies the assumptions of Lemma \ref{LemmaTLp1}, so we can deduce that $\iint_{D \times D} |u(z)- u(x)| d\hat{\pi}_{n}^{-1} \circ \pi_n(x,z) \rightarrow 0$ as $n \rightarrow \infty$. By \eqref{eqn1TLp} we get that:
\begin{equation*}
 \lim_{n \rightarrow \infty} \left| \iint_{D \times D} |u_n(y) - u(x)| d \pi_n(x,y)   -   \iint_{D \times D} |u_n(z) - u(y)| d \hat{\pi}_n(y , z) \right| =0.
\end{equation*}
This implies the result.
\end{proof}

\begin{proposition}
Let $(\mu,f) \in TL^p$ and let
 $\left\{ \left(\mu_n , f_n \right) \right\}_{n \in \N}$ be a sequence in $TL^p$. The following statements are equivalent:
\begin{list1}
\item  $ \left(\mu_n , f_n \right)  \overset{{TL^p}}{\longrightarrow} (\mu, f)$ as $n \rightarrow \infty$.
\item $\mu_n \overset{w}{\longrightarrow} \mu$ and for every stagnating sequence of transportation plans $\left\{\pi_n \right\}_{n \in \N}$
(with $\pi_n \in \Gamma(\mu, \mu_n)$) 
\begin{equation}
\iint_{D \times D} \left| f(x) - f_n(y) \right|^p d\pi_n(x,y) \rightarrow 0, \:  as \: n \rightarrow \infty.
\label{convergentTLp}
\end{equation} 
\item $\mu_n \overset{w}{\longrightarrow} \mu$ and there exists a stagnating sequence of transportation plans $\left\{\pi_n \right\}_{n \in \N}$ (with $\pi_n \in \Gamma(\mu, \mu_n)$) for which \eqref{convergentTLp} holds.
\end{list1}
Moreover, if the measure $\mu$ is absolutely continuous with respect to the Lebesgue measure, the following are  equivalent to the previous statements:
\begin{list1}
\addtocounter{broj1}{3}
\item  $\mu_n \overset{w}{\longrightarrow} \mu$ and there exists a stagnating sequence of transportation maps $\left\{ T_n \right\}_{n \in \N}$ (with ${T_n}_\sharp \mu = \mu_n$) such that:
\begin{equation}
\int_{D} \left| f(x) - f_n\left(T_n(x)\right) \right|^p d\mu(x) \rightarrow 0, \:  as \: n \rightarrow \infty.
\label{convergentTLpMap}
\end{equation}
\item $\mu_n \overset{w}{\longrightarrow} \mu$ and for any stagnating sequence of transportation maps $\left\{ T_n \right\}_{n \in \N}$  (with ${T_n}_\sharp \mu = \mu_n$)  \eqref{convergentTLpMap} holds.
\end{list1}
\label{EquivalenceTLp}
\end{proposition}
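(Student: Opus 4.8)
The plan is to prove the cycle of implications $(1)\Rightarrow(3)\Rightarrow(2)\Rightarrow(1)$ among the first three statements, and then, under the extra hypothesis that $\mu$ is absolutely continuous, to append $(2)\Rightarrow(5)\Rightarrow(4)\Rightarrow(3)$ so that all five become equivalent. Two structural facts drive everything. First, every coupling $\pi\in\Gamma(\mu,\mu_n)$ used in \eqref{tlpmetric} has $\iint_{D\times D}|x-y|^p\,d\pi$ no larger than the full cost; hence control of $d_{TL^p}$ forces $d_p(\mu,\mu_n)\to 0$, i.e.\ $\mu_n\overset{w}{\longrightarrow}\mu$, and forces the near-optimal plans to be stagnating. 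Second, Lemma \ref{LemmaEquivalenceTLp}: once $\iint|f(x)-f_n(y)|^p\,d\pi_n\to0$ along \emph{one} stagnating sequence of plans, it holds along \emph{every} such sequence (its proof rests on Lemma \ref{LemmaTLp1}, already established). A third, purely soft, ingredient is that $\mu_n\overset{w}{\longrightarrow}\mu$ guarantees existence of a reference stagnating sequence of plans (the discussion around \eqref{convergentPlans}), and, when $\mu$ is absolutely continuous, of a reference stagnating sequence of maps via \eqref{convergentMaps}.

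For $(1)\Rightarrow(3)$: assuming $d_{TL^p}((\mu,f),(\mu_n,f_n))\to0$, choose for each $n$ a coupling $\pi_n\in\Gamma(\mu,\mu_n)$ whose cost in \eqref{tlpmetric} is within $\tfrac1n$ of the infimum (or an optimal one, which exists by the identification with $(\mathcal P_p(D\times\R),\mathbf d_p)$ from Proposition \ref{PropTLpandProductSpace} and standard optimal transport). Then both summands $\iint|x-y|^p\,d\pi_n$ and $\iint|f(x)-f_n(y)|^p\,d\pi_n$ tend to $0$; the former says $\{\pi_n\}$ is stagnating (so in particular $\mu_n\overset{w}{\longrightarrow}\mu$) and the latter is exactly \eqref{convergentTLp}. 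For $(3)\Rightarrow(2)$: statement $(3)$ provides one stagnating sequence $\{\pi_n\}$ with \eqref{convergentTLp}; applying Lemma \ref{LemmaEquivalenceTLp} with $u=f$, $u_n=f_n$ upgrades this to: \eqref{convergentTLp} holds along every stagnating sequence of plans, which with $\mu_n\overset{w}{\longrightarrow}\mu$ is $(2)$. For $(2)\Rightarrow(1)$: since $\mu_n\overset{w}{\longrightarrow}\mu$ and $D$ is bounded there is a stagnating $\{\pi_n\}$ with $\pi_n\in\Gamma(\mu,\mu_n)$; by $(2)$ it satisfies \eqref{convergentTLp}, and being stagnating it also has $\iint|x-y|^p\,d\pi_n\to0$, whence $d_{TL^p}((\mu,f),(\mu_n,f_n))^p\le\iint_{D\times D}|x-y|^p+|f(x)-f_n(y)|^p\,d\pi_n\to0$.

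In the absolutely continuous case: $(2)\Rightarrow(5)$ because any stagnating sequence of maps $T_n$ induces, via \eqref{TransportationMapPlan1}, the plans $\pi_{T_n}=(\id\times T_n)_\sharp\mu$, which are stagnating, and by the change of variables \eqref{TransportationMapPlan} the condition \eqref{convergentTLp} for $\pi_{T_n}$ is precisely \eqref{convergentTLpMap}. $(5)\Rightarrow(4)$ is immediate once one stagnating sequence of maps is known to exist, which follows from \eqref{convergentMaps} (valid since $\mu$ is absolutely continuous and $\mu_n\overset{w}{\longrightarrow}\mu$; boundedness of $D$ lets one pass from the $L^1$ version there to the $L^p$ one). $(4)\Rightarrow(3)$ again uses the push-forward plans $\pi_{T_n}$: they are stagnating and, by \eqref{TransportationMapPlan}, satisfy \eqref{convergentTLp}. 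Combined with the first cycle this closes all equivalences.

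The genuinely substantial step is $(3)\Rightarrow(2)$ — the independence of the limit from the choice of stagnating plan — but that work is entirely contained in Lemma \ref{LemmaEquivalenceTLp} (which couples two plans through a common three-marginal measure and then invokes Lemma \ref{LemmaTLp1}); beyond quoting it the argument is bookkeeping. The only other points needing a little care are the $\varepsilon$-attainment of the infimum in \eqref{tlpmetric} used in $(1)\Rightarrow(3)$ and the existence of the reference stagnating plan/map, both of which are routine consequences of optimal transport theory on the bounded domain $D$.
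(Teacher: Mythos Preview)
Your proof is correct and follows essentially the same approach as the paper's: the paper also isolates the equivalence $2\Leftrightarrow 3$ as a direct consequence of Lemma~\ref{LemmaEquivalenceTLp}, handles $1\Leftrightarrow 3$ by picking near-optimal plans in one direction and a reference stagnating sequence in the other, and treats the map statements $4,5$ via the push-forward plans $\pi_{T_n}$ and the change of variables~\eqref{TransportationMapPlan}. Your organization into the cycle $(1)\Rightarrow(3)\Rightarrow(2)\Rightarrow(1)$ followed by $(2)\Rightarrow(5)\Rightarrow(4)\Rightarrow(3)$ is a cosmetic rearrangement of the same argument.
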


\begin{proof}
By Lemma  \ref{LemmaEquivalenceTLp}, claims 2. and 3. are equivalent. In case $\mu$ is absolutely continuous with respect to the Lebesgue measure, we know that there exists a stagnating sequence of transportation maps $\left\{ T_n \right\}_{n \in \N}$  (with ${T_n}_{\sharp} \mu = \mu_n$). Considering the sequence of transportation plans $\left\{ \pi_{T_n} \right\}_{n \in \N}$ (as defined in \eqref{TransportationMapPlan1}) and using \eqref{TransportationMapPlan} we see that 2., 3., 4., and  5. are all equivalent. We prove the equivalence of 1. and 3.

($1. \Rightarrow 3.$) Note that $d_p(\mu, \mu_n) \leq d_{TL^p} \left( \left(\mu , f \right) ,\left(\mu_n , f_n \right)  \right) $ for every $n$. Consequently $d_p(\mu, \mu_n) \rightarrow 0$ as $n \rightarrow \infty$ and in particular $\mu_n \overset{w}{\longrightarrow} \mu$ as $n \rightarrow \infty$. Furthermore, since $d_{TL^p} \left( \left(\mu , f \right) ,\left(\mu_n , f_n \right)  \right)  \rightarrow 0$ as $n \rightarrow \infty$, there exists a sequence $\left\{\pi_n^*  \right\}_{n \in \N}$ of transportation plans (with $\pi_n^* \in \Gamma(\mu, \mu_n)$) such that:
\begin{align*}
\lim_{n \rightarrow \infty} \iint_{D \times D} |x-y|^p d \pi_n^*(x,y) =0,
\\
\lim_{n \rightarrow \infty} \iint_{D \times D} |f(x)-f_n(y)|^p d \pi_n^*(x,y) =0.
\end{align*}
$\left\{ \pi_n^* \right\}_{n \in \N}$ is then a stagnating  sequence of transportation plans for which \eqref{convergentTLp} holds.

($3. \Rightarrow 1.$) Since $\mu_n \overset{w}{\longrightarrow} \mu$ as $n \rightarrow \infty$ (and since $D$ is bounded), we know that $d_p(\mu_n , \mu) \rightarrow 0$ as $n \rightarrow \infty$. In particular, we can find a sequence of transportation plans $\left\{ \pi_n \right\}_{n \in \N}$ with $\pi_n \in \Gamma(\mu, \mu_n)$ such that:
\begin{equation*}
\lim_{n \rightarrow \infty} \iint_{D \times D}|x-y|^p d \pi_n(x,y) =0 
\end{equation*}
 $\left\{ \pi_n \right\}_{n \in \N}$ is then a stagnating sequence of transportation plans. By the hypothesis we conclude that:
\begin{equation*}
\lim_{n \rightarrow \infty} \iint_{D \times D}|f(x)-f_n(y)|^p d \pi_n(x,y) =0 
\end{equation*}
We deduce that $\lim_{n \rightarrow \infty} d_{TL^p} \left((\mu ,f), (\mu_n , f_n) \right) =0$. 
\end{proof}

\begin{definition}
Suppose $\left\{ \mu_n  \right\}_{n \in \N} $ in $\mathcal{P}(D)$ converges weakly to $\mu \in \mathcal{P}(D)$. We say that the sequence $\left\{ u_n\right\}_{n \in \N}$ (with $u_n \in L^p(\mu_n)$) converges in the $TL^p$ sense to $u \in L^p(\mu)$, if $\left\{  \left(\mu_n , u_n  \right) \right\}_{n \in \N}$ converges to $(\mu, u)$ in the $TL^p$ metric. In this case we use a slight abuse of notation and write $u_n \overset{{TL^p}}{\longrightarrow} u$ as $n \rightarrow \infty$. Also, we say the sequence $\left\{ u_n\right\}_{n \in \N}$ (with $u_n \in L^p(\mu_n)$) is relatively compact in $TL^p$ if the sequence $\left\{   \left(\mu_n , u_n  \right)\right\}_{n \in \N}$ is relatively compact in $TL^p$.
\end{definition}

\begin{remark}
Thanks to Proposition \ref{EquivalenceTLp} when $\mu$ is absolutely continuous with respect to the Lebesgue measure $u_n \overset{{TL^p}}{\longrightarrow} u$ as $n \rightarrow \infty$ if and only if for every (or one) $\left\{ T_n\right\}_{n \in \N}$ stagnating  sequence  of transportation maps (with $T_{n \sharp} \mu= \mu_n$) it is true that $u_n \circ T_n \overset{{L^p(\mu)}}{\longrightarrow} u $ as $n \rightarrow \infty$ ( this in particular implies the last part of Remark \ref{remarkTLpandLp}).
Also $\left\{ u_n  \right\}_{n \in \N} $ is relatively compact in $TL^p$ if and only if  for every (or one) $\left\{ T_n\right\}_{n \in \N}$ stagnating sequence  of transportation maps (with $T_{n \sharp} \mu= \mu_n$) it is true that  $\left\{ u_n \circ T_n \right\}_{n \in \N} $ is relatively compact in $L^p(\mu)$.
\end{remark}

In the light of Proposition \ref{EquivalenceTLp} and Remark \ref{remarkFiberCompletion}, we finish this section by illustrating a further connection between Young measures and the $TL^p$ space and also, we provide a geometric characterization of $L^p$-convergence. These connections follow from Theorem 2.4.3 in \cite{YoungMeasuresOnTopSpaces}, nevertheless, we decided to present them in the context of the tools and results presented in this section. Let us consider $\mu$ to be the Lebesgue measure. The set $L^p(\mu)$ can be identified with the fiber $TL_p \llcorner _\mu$ in a canonical way:
$$f \in L^p(\mu) \mapsto  (\mu,f) \in TL^p \llcorner_\mu.$$ 
Thus, we can endow $L^p(\mu)$ with the distance $d_{TL^p}$. Note that by Remark \ref{remarkTLpandLp}, the topologies in $L^p(\mu)$ generated by $d_{TL^p}$ and $|| \cdot||_{L^p(\mu)}$ are the same. However, Remark \ref{TLpNotcomplete} implies that $d_{TL^p}$ and the distance generated by the norm $||\cdot||_{L^p(\mu)}$ are not equivalent. Note that the space  $L^p(\mu)$ endowed with the norm $||\cdot||_{L^p(\mu)}$ is a complete metric space. On the other hand, by Remark \ref{remarkFiberCompletion}, the completion of $L^p(\mu)$ endowed with the metric $d_{TL^p}$ is $\mathcal{P}_p(D \times \R) \llcorner_{\mu}$ with $\textbf{d}_p$ as distance. This is a characterization for the class of Young measures with finite $p$-moment, namely, they  can be interpreted as the completion of the space $L^p(\mu)$ endowed with the metric $d_{TL^p}$. Regarding the geometric interpretation of $L^p$-convergence, we have the following.
\begin{corollary}
Let $\mu$ be the Lebesgue measure on $D$. Let $\left\{ f_n \right\}_{n \in \N}$ be a sequence in $L^p(\mu)$ and let $f \in L^p(\mu)$. Then, $\left\{ f_n \right\}_{n \in \N}$ converges to $f$ in $L^p( \mu)$ if and only if the graphs of $f_n$ converge to the graph of $f$ in the $p$-OT sense.
\end{corollary}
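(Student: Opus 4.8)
The plan is to unwind the three identifications already set up in this section and reduce the statement to Proposition \ref{EquivalenceTLp}. First, I would make precise what "the graphs of $f_n$ converge to the graph of $f$ in the $p$-OT sense" means: under the map $(\mu,g) \mapsto (Id \times g)_\sharp \mu$ the graph of $g$ is identified with a measure in $\mathcal{P}_p(D \times \R)$ (the $p$-moment is finite because $D$ is bounded and $g \in L^p(\mu)$), and convergence of the graphs in the $p$-OT sense means $\mathbf{d}_p\big((Id\times f_n)_\sharp\mu,\,(Id\times f)_\sharp\mu\big) \to 0$; by Remark \ref{remarkdp} this is the same as convergence in the standard $p$-OT distance $d_p$ on $D \times \R$.

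Next, by Proposition \ref{PropTLpandProductSpace}, $\mathbf{d}_p\big((Id\times f_n)_\sharp\mu,\,(Id\times f)_\sharp\mu\big) = d_{TL^p}\big((\mu,f_n),(\mu,f)\big)$, so graph convergence in the $p$-OT sense is exactly convergence $(\mu,f_n) \converges{TL^p} (\mu,f)$.

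Finally, I would invoke Proposition \ref{EquivalenceTLp}: since $\mu$ is the Lebesgue measure, hence absolutely continuous, and since here the base measures are all equal to $\mu$, the constant sequence $T_n = Id$ is a (trivially) stagnating sequence of transportation maps with ${T_n}_\sharp \mu = \mu$. Statement~5 of that proposition (or equivalently the last part of Remark \ref{remarkTLpandLp}) then gives that $(\mu,f_n) \converges{TL^p} (\mu,f)$ holds if and only if $\int_D |f(x) - f_n(Id(x))|^p\, d\mu(x) \to 0$, i.e. if and only if $f_n \to f$ in $L^p(\mu)$. Chaining the three equivalences yields the corollary.

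I do not expect any real obstacle here; the corollary is essentially a restatement of the machinery of the section, and the only point requiring a little care is the bookkeeping at the first step—checking that the pushforward measures genuinely lie in $\mathcal{P}_p(D\times\R)$ and that "$p$-OT convergence of graphs" is interpreted via $\mathbf{d}_p$ rather than some other metric on $D\times\R$, which is harmless by Remark \ref{remarkdp}. One could alternatively phrase the whole argument directly through Remark \ref{remarkFiberCompletion}, viewing $L^p(\mu)$ as the fiber $TL^p\llcorner_\mu$ with the metric $d_{TL^p}$, but the route through Proposition \ref{EquivalenceTLp} is the most economical.
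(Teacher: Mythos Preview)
Your proposal is correct and follows essentially the same route as the paper: both arguments chain Proposition \ref{PropTLpandProductSpace} with Remark \ref{remarkdp} to identify graph $p$-OT convergence with $TL^p$ convergence, and then invoke the equivalence of $TL^p$ and $L^p$ convergence over a fixed base measure (the paper cites Remark \ref{remarkTLpandLp} directly, while you unpack it via Proposition \ref{EquivalenceTLp} using $T_n = Id$). The extra care you take in checking the pushforwards lie in $\mathcal{P}_p(D\times\R)$ is fine but not strictly needed, as this is already implicit in the section's identifications.
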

\begin{proof}
From Remark \ref{remarkTLpandLp}, the sequence $\left\{ f_n \right\}_{n \in \N}$ converges to $f$ in $L^p(\mu)$ if and only if the sequence $\left\{(\mu,f_n) \right\}_{n \in \N}$ converges to $(\mu,f)$ in $TL^p$. This implies the result, because $TL^p$ distance is equivalent to the $p$-OT distance defined on $\mathcal{P}_p(D \times \R)$ (see Proposition \ref{PropTLpandProductSpace} and Remark \ref{remarkdp}).
\end{proof}

%%%%%%%%%%%%%%%%%%%%%%%%%%%%%%%%%%%%%%%%%%%%%%%%%
\section{$\Gamma$-convergence of  $TV_\veps(\cdot, \rho)$}
\label{GammaConvTVe}

In this section we prove the $\Gamma$-convergence of the nonlocal functionals $TV_\veps(\cdot,\rho)$ to the weighted total variation with weight $\rho^2$.

\begin{theorem}
Consider an open, bounded domain  $D$  in $\mathbb{R}^d$ with Lipschitz boundary. 
Let $\rho: D \to \R$ be continuous and bounded below and above by positive constants.
Then, $\left\{ TV_\veps(\cdot; \rho) \right\}_{\veps>0}$ (defined in \eqref{NonlocalTV}) $\Gamma$-converges with respect to the $L^1(D, \rho)$-metric to $\sigma_{\eta} TV(\cdot , \rho^2)$. Moreover, the functionals $\left\{ TV_\veps(\cdot; \rho) \right\}_{\veps>0}$ satisfy the compactness property 
(Definition \ref{defCompac}) with respect to the $L^1(D,\rho)$-metric.
\label{NonlocalContinuousTVGamma}
\end{theorem}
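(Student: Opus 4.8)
The plan is to follow the approach of Ponce \cite{Ponce} for nonlocal functionals, adapting it to the weighted setting. There are three things to establish: the liminf inequality, the limsup inequality, and compactness. The natural first step is to reduce everything to the case $\rho\equiv 1$ locally via a blow-up/localization argument, or — more efficiently — to carry the weight $\rho$ through Ponce's estimates directly, using that $\rho$ is continuous and pinched between positive constants so that on small balls it is nearly constant.

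\textbf{Compactness.} I would prove this first, since it is needed for the $\Gamma$-convergence machinery and is somewhat independent. Given a sequence $u_\veps$ with $\sup_\veps \|u_\veps\|_{L^1(D,\rho)}<\infty$ and $\sup_\veps TV_\veps(u_\veps;\rho)<\infty$, the goal is $L^1$-relative compactness. The standard route is a Fréchet--Kolmogorov/Riesz compactness criterion: one must control $\int |u_\veps(x+h)-u_\veps(x)|\,dx$ uniformly for small $h$ (after extending $u_\veps$ suitably, or working on compactly contained subdomains and handling the boundary layer using the Lipschitz boundary of $D$). The key estimate is that translations by $h$ with $|h|\lesssim\veps$ are controlled directly by $TV_\veps$ (using (K1): $\bm\eta(0)>0$ and continuity at $0$ give $\eta_\veps\gtrsim \veps^{-d}$ on a ball of radius $c\veps$), and translations by larger $h$ are obtained by chaining $\sim |h|/\veps$ such small steps, which is exactly the telescoping argument in Ponce. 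The weight $\rho$ only contributes bounded multiplicative constants. The boundary layer near $\partial D$ requires care: one uses the Lipschitz regularity to reflect/extend, or estimates the measure of the bad set and uses the uniform $L^1$ bound.

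\textbf{Liminf inequality.} Given $u_\veps \to u$ in $L^1(D,\rho)$ with $\liminf TV_\veps(u_\veps;\rho)<\infty$, I want $TV(u;\rho^2)\le \sigma_\eta^{-1}\liminf TV_\veps(u_\veps;\rho)$ — wait, the correct direction is $\liminf TV_\veps(u_\veps;\rho)\ge \sigma_\eta TV(u;\rho^2)$. By compactness $u\in BV$. One tests against $\phi\in C_c^\infty(D,\R^d)$ with $|\phi|\le\rho^2$: rewrite the nonlocal energy, for each direction use the one-dimensional slicing inequality $|u(x)-u(y)|\ge$ (something controlling the directional derivative), pass $\veps\to 0$ using $u_\veps\to u$ in $L^1$ and dominated convergence, and recognize $\int_{\R^d}\eta(h)|h\cdot e|\,dh = \sigma_\eta$ for the surface tension. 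The weight appears because $\eta_\veps(x-y)\rho(x)\rho(y)\approx \eta_\veps(x-y)\rho(x)^2$ when $|x-y|\lesssim\veps$ and $\rho$ is continuous; quantifying this "$\approx$" uniformly, and justifying the limit, is the technical heart here.

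\textbf{Limsup inequality.} By Proposition \ref{Approximation inBV(D)} it suffices to construct a recovery sequence for $u\in C_c^\infty(\R^d)$ (restricted to $D$), using the density-of-smooth-functions reduction and the diagonal argument of Remark \ref{DenseGamma}. For smooth $u$ the obvious candidate is $u_\veps = u$ itself: then $TV_\veps(u;\rho) = \frac1\veps\iint \eta_\veps(x-y)|u(x)-u(y)|\rho(x)\rho(y)\,dx\,dy$, and Taylor expanding $u(x)-u(y)\approx \nabla u(x)\cdot(x-y)$, changing variables $y = x+\veps h$, and using continuity of $\rho$ gives $\to \int_D |\nabla u(x)|\,\rho(x)^2\,dx \cdot \int \eta(h)|h\cdot e_{\nabla u(x)}|\,dh = \sigma_\eta\int_D|\nabla u|\rho^2 = \sigma_\eta TV(u;\rho^2)$, where one must be slightly careful with the boundary of $D$ (the domain of the inner integral is $D$, not all of $\R^d$, but since $u$ is smooth and we only need $\limsup \le$, boundary contributions are negligible or of the right sign after a routine estimate).

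\textbf{Main obstacle.} I expect the liminf inequality, specifically handling the product weight $\rho(x)\rho(y)$ and the restriction to the bounded domain $D$ (as opposed to $\R^d$) simultaneously, to be the trickiest part — one needs a careful localization so that on the $\veps$-scale the weight looks constant and the domain looks like a half-space near the boundary, and one must ensure the error terms from the continuity modulus of $\rho$ vanish in the limit. The chaining argument in the compactness proof near $\partial D$ (controlling the boundary layer using only the Lipschitz assumption) is the second most delicate point.
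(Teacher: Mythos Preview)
Your limsup sketch matches the paper's argument exactly: take $u_\veps=u$ for smooth $u$, Taylor-expand, change variables, use continuity of $\rho$ and radial symmetry of $\eta$ to recover $\sigma_\eta\int|\nabla u|\rho^2$, then invoke Proposition~\ref{Approximation inBV(D)} and Remark~\ref{DenseGamma}. Fine.

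For compactness, your plan is in the right family (Fr\'echet--Kolmogorov via translation control and chaining), and this is indeed what underlies the Alberti--Bellettini argument the paper quotes. But your boundary treatment needs sharpening: the option ``estimate the measure of the bad set and use the uniform $L^1$ bound'' does \emph{not} work, because a mere $L^1$ bound gives no equi-integrability near $\partial D$ and the sequence could concentrate there. Reflection/extension is genuinely required. The paper handles this by first proving the result for $C^2$ domains via an explicit reflection $x\mapsto 2Px-x$ across $\partial D$ (with quantitative bi-Lipschitz estimates), then reducing the Lipschitz case to the $C^2$ case by a global bi-Lipschitz change of variables. Your sketch does not indicate how to carry out the reflection for a merely Lipschitz boundary, and this two-step reduction is the paper's contribution here.

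The liminf is where your proposal has a real gap. You say you will follow Ponce, but what you describe --- test against $\phi$ with $|\phi|\le\rho^2$ and use a ``one-dimensional slicing inequality $|u(x)-u(y)|\ge$ (something controlling the directional derivative)'' --- is neither Ponce's argument nor, as written, a working one: there is no pointwise lower bound on $|u(x)-u(y)|$ by a directional derivative, and the duality/slicing route you gesture at would require substantially more structure to execute. Ponce's actual mechanism, which the paper adapts, is to mollify the \emph{sequence} $u_\veps$ at a fixed scale $\delta$ to obtain $u_{\veps,\delta}=J_\delta * u_\veps$, observe via Jensen's inequality that mollification does not increase the nonlocal energy (up to an error $a_{\veps,\delta}$ coming from shifting the weight $\rho(x)\rho(y)\to\rho(x+z)\rho(y+z)$, controlled by the Lipschitz constant of $\rho$ times $\delta$), then compute the $\veps\to 0$ limit of $TV_\veps(u_{\veps,\delta};\rho)$ exactly using that $u_{\veps,\delta}$ are uniformly $C^2$ (Lemma~\ref{LemmaGammaLimitSmooth}), and finally send $\delta\to 0$ using lower semicontinuity of $TV(\cdot;\rho^2)$. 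The non-Lipschitz continuous $\rho$ is then handled by monotone approximation. You correctly flagged the liminf as the hardest part, but the mollify-the-sequence-plus-Jensen idea is the missing ingredient.
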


Part of the proof of this result follows ideas present in the work of Ponce \cite{Ponce}. Specifically, Lemma \ref{LemmaGammaLimitSmooth} below and the first part of the proof of the liminf inequality  are  adaptations of results by Ponce. The first part of the proof of the limsup inequality is a careful adaptation of the appendix of a paper by Alberti and Bellettini \cite{AB2}.

We also prove compactness of the functionals $\left\{ TV_\veps(\cdot; \rho) \right\}_{\veps>0}$.This part required new arguments, due to the presence of domain boundary and lack of $L^\infty$-control. Part of the proof on compactness in \cite{AB2} is used. As a corollary, we show that if one considers only functions uniformly bounded in $L^\infty$, the compactness holds for open and bounded domains $D$ regardless of the regularity of its boundary.

Since the definition of $\Gamma$-convergence for a family of functionals indexed by real numbers is given in terms of sequences, in this section we adopt the following notation: $\veps$ is a short-hand notation for $\veps_n$ where $\left\{ \veps_n \right\}_{n \in \N}$ is an arbitrary sequence of positive real numbers converging to zero as $n \rightarrow \infty$.  Limits as $\veps \rightarrow 0$ simply mean limits as $n \rightarrow \infty$ for every such sequence.

\begin{lemma}
Let $D$ be a bounded open subset of $\R^d$ and let $\rho: D \rightarrow \R$ be a Lipschitz function that is bounded from below and from above by positive constants. Suppose that $\left\{ u_\veps \right\}_{\veps>0}$ is a sequence of $C^2$ functions such that 
\begin{equation}
 \sup_{\veps >0}  \left\{ ||\nabla u_\veps ||_{L^\infty(\R^d)}  + || D^2 u_\veps ||_{L^\infty(\R^d)} \right\} < \infty. 
 \label{UniformBoundedC2}
 \end{equation}
If $\nabla u_\veps  \overset{L^1(D)}{\longrightarrow} \nabla u $ for some $u \in C^2(\R^d)$, then
\begin{equation}
\lim_{\veps \rightarrow 0 } TV_\veps(u_\veps; \rho)= \sigma_{\eta}\int_{D}|\nabla u(x) | (\rho(x))^2 dx.
\label{GammaLimitSmooth}
\end{equation}
\label{LemmaGammaLimitSmooth}
\end{lemma}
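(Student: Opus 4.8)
The plan is to compute the limit directly by expanding $u_\veps(x) - u_\veps(y)$ to first order in $\veps$. First I would change variables in the inner integral: writing $y = x - \veps h$ and recalling $\eta_\veps(z) = \veps^{-d}\eta(z/\veps)$, one gets
\[
TV_\veps(u_\veps;\rho) = \frac{1}{\veps}\int_D \int_{D_{\veps,x}} \eta(h)\,|u_\veps(x) - u_\veps(x - \veps h)|\,\rho(x)\,\rho(x-\veps h)\,dh\,dx,
\]
where $D_{\veps,x} = \{h : x - \veps h \in D\}$. By the mean value theorem and the uniform $C^2$ bound \eqref{UniformBoundedC2}, $u_\veps(x) - u_\veps(x-\veps h) = \veps\,\nabla u_\veps(x)\cdot h + O(\veps^2 |h|^2)$, with the constant in $O$ independent of $\veps$ and $x$. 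Hence the integrand, after dividing by $\veps$, is $\eta(h)\,|\nabla u_\veps(x)\cdot h|\,\rho(x)\,\rho(x-\veps h) + O(\veps\,\eta(h)|h|^2)$, and (K3) guarantees $\int \eta(h)|h|^2\,dh < \infty$ is not quite what we need — rather we use $\int_{\R^d}\eta(h)|h|\,dh < \infty$ (finiteness of the first moment, equivalent to (K3)) to control the main term and observe the error term is integrable against $|h|^2$ on the relevant compact range or handled via a truncation argument.

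Next I would pass to the limit. The term $\rho(x-\veps h) \to \rho(x)$ uniformly on compact sets by continuity (Lipschitz) of $\rho$, and $\charf_{D_{\veps,x}}(h) \to \charf_{\R^d}(h)$ for a.e. $(x,h)$ since $D$ is open (for $x$ in the interior the condition $x - \veps h \in D$ eventually holds; the boundary $\partial D$ is Lebesgue-null). The crucial analytic input is $\nabla u_\veps \to \nabla u$ in $L^1(D)$ together with the uniform $L^\infty$ bound on $\nabla u_\veps$: passing to a subsequence we may assume $\nabla u_\veps \to \nabla u$ a.e., so $|\nabla u_\veps(x)\cdot h| \to |\nabla u(x)\cdot h|$ pointwise, and the uniform bound plus dominated convergence (dominating function $C\,\eta(h)|h|\,\charf_D(x)$, integrable by (K3) and boundedness of $D$) gives
\[
\lim_{\veps\to0} TV_\veps(u_\veps;\rho) = \int_D \left( \int_{\R^d} \eta(h)\,|\nabla u(x)\cdot h|\,dh \right) (\rho(x))^2\,dx.
\]
Finally, using radial symmetry of $\eta$, for each fixed unit vector $e = \nabla u(x)/|\nabla u(x)|$ (when $\nabla u(x)\neq 0$; the set where it vanishes contributes zero to both sides) we have $\int_{\R^d}\eta(h)\,|\nabla u(x)\cdot h|\,dh = |\nabla u(x)| \int_{\R^d}\eta(h)\,|h\cdot e|\,dh = |\nabla u(x)|\,\sigma_\eta$ by the remark following \eqref{sigma_eta} that $h_1$ can be replaced by $h\cdot e$ for any unit $e$. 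This yields \eqref{GammaLimitSmooth}. Since the limit is independent of the subsequence, the full sequence converges.

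The main obstacle I anticipate is making the boundary contribution rigorous: the region of integration in $h$ is the truncated set $D_{\veps,x}$, not all of $\R^d$, and near $\partial D$ this truncation is genuinely active. I would handle this by splitting $D$ into an inner region $D_\delta = \{x\in D : \dist(x,\partial D) > \delta\}$ and a boundary collar of width $\delta$; on $D_\delta$ the set $D_{\veps,x}$ eventually contains any fixed ball $B(0,R)$ once $\veps$ is small, and the tail $|h| > R$ is uniformly small by (K3); on the collar the contribution is bounded by $C\,\vol(\text{collar}) \to 0$ as $\delta \to 0$, uniformly in $\veps$, using the uniform Lipschitz bound on $u_\veps$ so that $|u_\veps(x)-u_\veps(y)|/\veps \le C|h|$ regardless of whether $y \in D$. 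A secondary technical point is that the error term $O(\veps^2|h|^2)$ requires either restricting attention to $|h| \le R$ (then it is $O(\veps)$ after the $R$-tail is controlled separately) or exploiting that the uniform Lipschitz bound already gives the clean domination $\eta(h)|h|$, so the second-order expansion is only needed to identify the pointwise limit, not to dominate.
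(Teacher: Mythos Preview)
Your approach is essentially the same as the paper's: change variables $h=(x-y)/\veps$, replace the difference quotient by $|\nabla u_\veps(x)\cdot h|$ via Taylor and the uniform $C^2$ bound, replace $\rho(x-\veps h)$ by $\rho(x)$ via Lipschitz continuity, handle the boundary truncation using (K3) and openness of $D$, and close with the radial-symmetry identity $\int_{\R^d}\eta(h)|e\cdot h|\,dh=\sigma_\eta$. The one organizational difference is that the paper integrates out $h$ \emph{first}, obtaining the exact identity $\int_D\int_{\R^d}\eta(h)|\nabla u_\veps(x)\cdot h|\rho(x)^2\,dh\,dx=\sigma_\eta\int_D|\nabla u_\veps|\rho^2\,dx$ for every $\veps$, and then passes to the limit using $\nabla u_\veps\to\nabla u$ in $L^1(D)$ directly; this avoids your detour through subsequences and a.e.\ convergence.
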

\begin{proof}
\textbf{Step 1:} For an arbitrary function $v \in C^2(\R^d)$ we define
$$  H_\veps(v)=  \frac{1}{\veps} \int_{D}\int_{D} \eta_\veps(x-y) |\nabla v (x) \cdot(y-x)| \rho(x)\rho(y)dydx. $$
First we show  that
\begin{equation}
\lim_{\veps \rightarrow 0} \left|  TV_\veps(u_\veps;\rho) - H_\veps(u_\veps) \right| =0.
\label{AsymptEnergiesCont}
\end{equation}
For this purpose, note that by Taylor's theorem and by \eqref{UniformBoundedC2}, for $x, y \in D$ $x \not = y$ and $\veps >0$
\[
 \left| \frac{u_\veps(x)-u_\veps(y)}{|x-y|} - \frac{\nabla u_\veps(x)\cdot (y-x)}{|x-y|} \right| \leq  || D^2u_\veps||_{L^\infty(\R^d)} |x-y|
 \leq C |x-y|,
\]
where $||D^2u_\veps||_{L^\infty(\R^d)}$ denotes the $L^\infty$ norm  of the Hessian matrix of the function $u_\veps$ and $C$ is a positive constant independent of $\veps$. Using this inequality and a simple change of variables we deduce
\begin{align*}
\left| TV_\veps(u_\veps;\rho) - H_\veps(u_\veps)   \right| & \leq  \frac{C\vol(D) ||\rho||_{L^\infty(D)}^2 } {\veps}\int_{|h| \leq \gamma} \eta_{\veps}(h)|h|^{2}dh
\\& =  C\vol(D) ||\rho||_{L^\infty(D)}^2  \int_{|\hat{h}| \leq \frac{\gamma}{\veps}}  \veps\eta(\hat{h})|\hat{h}|^{2}d\hat{h},
\end{align*}
where $\gamma$ denotes the diameter of the set $D$. Finally, using assumption (K3) on the kernel $\eta$, it is straightforward to deduce that the last term in the previous expression goes to zero as $\veps$ goes to zero, and thus we obtain \eqref{AsymptEnergiesCont}.

\textbf{Step 2:} Now, for $v \in C^2(\R^d)$ consider 
\begin{equation}
 \tilde{H}_\veps(v)=\frac{1}{\veps}\int_{D}\int_{x+h \in D} \eta_\veps(h)\left| \nabla v (x) \cdot h  \right| (\rho(x))^2 dh dx.
\end{equation}
We claim that  
\begin{equation}
\lim_{\veps \rightarrow 0}\left| H_\veps(u_\veps) - \tilde{H}_{\veps}(u_\veps)\right|=0.
\end{equation}
Indeed, using the fact that $\rho$ is Lipschitz,
\begin{align*}
\left| H_\veps(u_\veps) -\tilde{H}_\veps(u_\veps)\right| &\leq   \frac{1}{\veps}\int_{D}\int_{x+h \in D} \eta_\veps(h)\left| \nabla u _\veps(x) \cdot h  \right|  \left|\rho(x+h) -  \rho(x) \right| \rho(x) dh dx 
\\  &\leq  \frac{ || \nabla u _\veps||_{L^\infty(\R^d)}  \Lip(\rho ) ||  \rho ||_{L^\infty(D)} }{\veps}\int_{D}\int_{x+h \in D} \eta_\veps(h)| h  |^2  dh dx
\\ & \leq  \frac{ || \nabla u _\veps||_{L^\infty(\R^d)} \Lip(\rho ) ||  \rho ||_{L^\infty(D)}  \vol(D)}{\veps}\int_{|h| < \gamma} \eta_\veps(h)| h  |^2  dh,
\end{align*}
where as in Step 1 $\gamma$ denotes the diameter of the set $D$. The last term in the previous expression goes to zero as $\veps$ goes to zero (as in Step 1).

\textbf{Step 3:} We claim  that 
\begin{equation}
\lim_{\veps \rightarrow 0}  \frac{1}{\veps}\int_{D}\int_{x+h \in  \R^d \setminus D} \eta_\veps(h)\left| \nabla u _\veps(x) \cdot h  \right| (\rho(x))^2 dh dx  =0.
\label{bleh}
\end{equation}
Note that,
\begin{align*}
\frac{1}{\veps}  \int_{D}\int_{x+h \in  \R^d \setminus D} & \eta_\veps(h)\left| \nabla u _\veps(x) \cdot h  \right| (\rho(x))^2 dh dx \\
& \leq ||\nabla u _\veps||_{L^\infty(\R^d)} ||\rho||_{L^\infty(D)}^2  \int_{D} \int_{x+ \veps \hat{h}  \in \R^d \setminus D}\!\! \eta(\hat{h}) |\hat{h}|d\hat{h}dx .
\end{align*}
Using \eqref{UniformBoundedC2} and assumption (K3) on $\eta$, we deduce that the right hand side of the previous inequality goes to zero as $\veps $ goes to zero, thus implying  \eqref{bleh}.

\textbf{Step 4:} Using steps 1, 2, and 3 in order to obtain \eqref{GammaLimitSmooth} it is enough to prove that 
\begin{equation} \label{t4}
\lim_{\veps \rightarrow 0}  \frac{1}{\veps}\int_{D}\int_{\R^d}\eta_\veps(h) |\nabla u_\veps (x) \cdot h| (\rho(x))^2    dh dx = \sigma_{\eta} \int_{D}|\nabla u| (\rho(x))^2 dx.
\end{equation}
Note that using the change of variables $\hat{h}=\frac{h}{\veps}$ and the isotropy of the kernel $\eta$, imply
\begin{align*}
\frac{1}{\veps}\int_{D}\int_{\R^d}\eta_\veps(h) |\nabla u _\veps(x) \cdot h| (\rho(x))^2      dh dx & = \int_{D}\left(\int_{\R^d}\eta (\hat{h}) |\nabla u_\veps (x) \cdot \hat{h}| d\hat{h}\right) (\rho(x))^2  dx \\
& = \sigma_{\eta } \int_{D} |\nabla u_\veps (x)|(\rho(x))^2 dx.
\end{align*}
Taking $\veps $ to zero in the previous expression we obtain \eqref{t4}, and consequently \eqref{GammaLimitSmooth}.
\end{proof}

\subsection{Proof of Theorem \ref{NonlocalContinuousTVGamma}: the Liminf Inequality.} \label{sec:liminf}
 \begin{proof}

\textbf{Case 1: $\rho$ is Lipschitz.}
Consider an arbitrary $u \in L^1(\rho)$ and suppose that $u_\veps \converges{L^1(\rho)} u$ as $\veps \rightarrow 0$. Recall that given the assumptions on $\rho$ this is equivalent to $u_\veps \converges{L^1(D)} u$ as $\veps \rightarrow 0$. We want to show that $\liminf_{\veps \rightarrow 0} TV_\veps(u_\veps;\rho) \geq \sigma_\eta TV(u; \rho^2)$. Without the loss of generality we can assume that $\left\{ TV_\veps(u_\veps; \rho)  \right\}_{\veps>0} $ is bounded.

The idea is to reduce the problem to a setting where we can use Lemma \ref{LemmaGammaLimitSmooth}. The plan is to first regularize the functions $u_\veps$ to obtain a new sequence of functions $\left\{ u_{\veps, \delta} \right\}_{\veps>0}$  ($\delta>0$ is a parameter that controls the smoothness of the regularized functions). The point is that 
regularizing  does not increase  the energy in the limit, while it gains the regularity needed to use Lemma \ref{LemmaGammaLimitSmooth}. 

To make this idea precise, consider $J: \R^d \rightarrow [0, \infty)$ a standard mollifier.  That is, $J$ is a smooth radially  symmetric function, supported in the closed unit ball $\overline{B(0,1)}$ and is such that $\int_{\R^d} J(z) dz =1$. We set $J_\delta$ to be $J_\delta(z)= \frac{1}{\delta^d}J\left( \frac{z}{\delta} \right) $.  Note that $\int_{\R^d}J_\delta(z) dz =1$ for every $\delta>0$.
 
Fix $D'$ an open domain compactly contained in $D$. There exists $\delta'>0$ such that $ D''= \bigcup_{x \in D'}B(x,\delta') $ is contained in $D$. For $0<\delta<\delta'$ and for a given function $v \in L^1(D)$ we define the mollified function $v_\delta \in L^1(\R^d)$ by setting $v_\delta(x)= \int_{\R^d} J_\delta(x-z)v(z)dz= \int_{\R^d}  J_\delta(z)v(x-z)dz $ where we have extended $v$ to be zero outside of $D$. The functions $v_\delta$ are smooth,  and satisfy $v_\delta \converges{L^1(D')}  v $ as $\delta \rightarrow 0$, see for example \cite{Leoni}.  Furthermore
\begin{equation}
\nabla v_{ \delta}(x) = \int_{\R^d} \nabla J_\delta(z) v(x-z) dz = \frac{1}{\delta} \int_{\R^d}\frac{1}{\delta^d} \nabla J\left( \frac{z}{\delta} \right) v(x-z) dz.
\label{GradientRegularized}
\end{equation}
By taking the second derivative, it follows that there is a constant $C>0$ (only depending on the mollifier $J$)  such that
\begin{equation}
 || \nabla v_\delta||_{L^\infty(\R^d)} \leq \frac{C}{\delta}||v||_{L^1(D)} \quad \te{ and }  \quad || D^2 v_\delta||_{L^\infty(\R^d)} \leq \frac{C}{\delta^2}||v||_{L^1(D)}.
 \label{Aux12}
\end{equation}
Since $u_\veps \converges{L^1(D)} u$ as $\veps\rightarrow 0$ the norms  $|| u_{\veps}||_{L^1(D)} $ are uniformly bounded. Therefore, taking $v = u_\veps$ in inequalities \eqref{Aux12}  and setting $u_{\veps, \delta}= (u_\veps)_\delta$, implies
\begin{equation*}
 \sup_{\veps >0}  \left\{ ||\nabla u_{\veps,\delta} ||_{L^\infty(\R^d)}  + || D^2 u_{\veps,\delta} ||_{L^\infty(\R^d)} \right\} < \infty.
\end{equation*}
Moreover, using \eqref{GradientRegularized} to express $\nabla u_{\veps, \delta}$ and $\nabla u_{\delta}$, it is straightforward to deduce that

\begin{equation*}
\int_{D'}\left| \nabla u_{\veps, \delta}(x) - \nabla u_\delta(x)   \right| dx  \leq \frac{C}{\delta} \int_{D}|u_\veps(x)- u(x)| dx.
\end{equation*}
for some constant $C$ independent of $\veps$. In particular, $\int_{D'}\left| \nabla u_{\veps, \delta}(x) - \nabla u_\delta(x)   \right| dx \rightarrow 0$ as $\veps \rightarrow 0$ and hence we can apply Lemma \ref{LemmaGammaLimitSmooth}  taking $D$ to be $D'$) to infer that 
\begin{align}
\begin{split}
\lim_{\veps \rightarrow 0} \frac{1}{\veps} \int_{D'}\int_{D'} \eta_\veps(x-y)|u_{\veps, \delta}(x)- u_{\veps, \delta}(y)|  & \rho(x)\rho(y) dxdy \\
& = \sigma_{\eta}\int_{D'}|\nabla  u_\delta(x)|(\rho(x))^2 dxdy.
\end{split}
\label{liminfinD'}
\end{align}
To measure the approximation error in the energy, we  set 
\begin{align*}
a_{\veps,\delta}=  \frac{1}{\veps} \int_{D''}\int_{D''}\int_{\R^d} J_\delta(z) \eta_\veps(x-y)|u_\veps(x)-u_\veps(y)|\left(\rho(x)\rho(y)-\rho(x+z)\rho(y+z)   \right)dzdxdy,
\end{align*}
and estimate
\begin{align*}
TV_\veps (u_\veps; \rho) & \geq  \frac{1}{\veps} \int_{D''}\int_{D''} \eta_\veps(x-y)|u_\veps(x)-u_\veps(y)|\rho(x)\rho(y)dxdy
\\&= \frac{1}{\veps} \int_{D''}\int_{D''}\int_{\R^d} J_\delta(z) \eta_\veps(x-y)|u_\veps(x)-u_\veps(y)|\rho(x)\rho(y)dzdxdy
\\ & = a_{\veps,\delta}+\frac{1}{\veps} \int_{D''}\int_{D''}\int_{\R^d}J_\delta(z)\eta_{\veps}(x-y)| u_\veps(x)-u_\veps(y)| \rho(x+z) \rho(y+z) dzdydx
\\ &  \geq  a_{\veps,\delta}+ \frac{1}{\veps} \int_{D'}\int_{D'}\int_{\R^d}J_\delta(z)\eta_{\veps}(\hat{x}-\hat{y})| u_\veps(\hat{x}-z)-u_\veps(\hat{y}-z)|\rho(\hat{x})\rho(\hat{y}) dzd\hat{y}d\hat{x}
\\ & \geq a_{\veps,\delta}+ \frac{1}{\veps} \int_{D'}\int_{D'} \eta_{\veps}(\hat{x}-\hat{y}) \left|  \int_{\R^d}J_\delta(z) \left( u_\veps(\hat{x}-z)-u_\veps(\hat{y}-z) \right) dz\right|\rho(\hat{x})\rho(\hat{y}) d \hat{y}d\hat{x}
\\ &= a_{\veps,\delta}+ \frac{1}{\veps} \int_{D'}\int_{D'}\eta_{\veps}(\hat{x}-\hat{y})| u_{\veps,\delta}(\hat{x})-u_{\veps,\delta}(\hat{y})|\rho(\hat{x}) \rho(\hat{y}) d\hat{y}d\hat{x},
\end{align*}
where the second inequality is obtained using the change of variables $\hat{x}=x+z $ , $\hat{y}=y+z$, $z=z$ together with the choice of $\delta$ and $\delta'$; Jensen's inequality justifies the third one. This chain of inequalities and \eqref{liminfinD'} imply that
\begin{equation} \label{t5}
\liminf_{\veps \rightarrow 0}TV_\veps(u_\veps; \rho) \geq \liminf_{\veps \rightarrow 0} a_{\veps, \delta} +  \sigma_{\eta}\int_{D'}|\nabla u_\delta(x)|(\rho(x))^2 dx.
\end{equation}
 We estimate $a_{\veps,\delta}$ as follows
\begin{align*}
|a_{\veps,\delta}| 
%&\leq \frac{1}{\veps} \int_{D''}\int_{D''}\int_{\R^d}J_\delta(z)\eta_\veps(x-y)\left| u_\veps(x)- u_\veps(y) \right| \left| \rho(x)\rho(y) - \rho(x+z)\rho(y+z) \right| dz dx dy \\
& \leq  \frac{2|| \rho||_{L^\infty}}{\veps} \! \int_{D''} \! \int_{D''}\!\int_{\R^d} J_\delta(z)\eta_\veps(x-y)\left| u_\veps(x)- u_\veps(y) \right| \left| \rho(x)- \rho(x+z)\right|dzdxdy 
\\& \leq   \frac{2 \delta ||\rho||_{L^\infty} \Lip(\rho)}{\veps} \int_{D''}\int_{D''}\int_{\R^d} J_\delta(z)\eta_\veps(x-y)\left| u_\veps(x)- u_\veps(y) \right| dzdxdy
\\& =  \frac{2 \delta ||\rho||_{L^\infty} \Lip(\rho)}{\veps} \int_{D''}\int_{D''}\eta_\veps(x-y)\left| u_\veps(x)- u_\veps(y) \right| dxdy.
\end{align*}
Since we had assumed that $\left\{ TV_\veps(u_\veps; \rho) \right\}_{\veps>0}$ is bounded, and also that $\rho$ is bounded from below by a positive constant,  we conclude from the previous inequalities that $\liminf_{\delta \rightarrow 0} \liminf_{\veps\rightarrow 0} a_{\veps,\delta}=0$ and thus, by \eqref{t5},
\begin{equation*}
\liminf_{\veps \rightarrow 0} TV_\veps(u_\veps; \rho) \geq \sigma_{\eta}  \liminf_{\delta \rightarrow 0}\int_{D'}|\nabla u_\delta| (\rho(x))^2dx.
\end{equation*}

Given that $u_\delta \rightarrow_{L^1(D')} u $ as $\delta \rightarrow 0$, we can use the lower semicontinuity of the weighted total variation,  \eqref{lscWeightedTV}, to obtain
\begin{equation}
\liminf_{\veps \rightarrow 0} TV_\veps(u_\veps; \rho) \geq \sigma_{\eta}  \liminf_{\delta \rightarrow 0}\int_{D'}|\nabla u_\delta| (\rho(x))^2dx \geq \sigma_\eta |Du|_{\rho^2}(D').
\end{equation}
Given that $D'$ was an arbitrary open set compactly contained in $D$, we can take $D' \nearrow D$ in the previous inequality to obtain the desired result.

\textbf{Case 2: $\rho$ is continuous but not necessarily Lipschitz.} The idea is to approximate $\rho$ from below by a family of Lipschitz functions $\left\{  \rho_{k} \right\}_{k \in \N}$.  Indeed, consider $\rho_k: D \rightarrow \R$ given by
\begin{equation} \label{t6}
\rho_k(x) := \inf_{y \in D} \rho(y) + k |x-y|.
\end{equation}
The functions $\rho_k$ are Lipschitz functions which are bounded from below and from above by the same constants bounding $\rho$ from below and from above. Moreover, given that $\rho$ is continuous, for every $x \in D$, $\rho_k(x) \nearrow \rho(x)$ as $k \rightarrow \infty$.

Let $u \in L^1(D)$ and suppose that $u_\veps \converges{L^1(D)} u$. Since $\rho_k$ is Lipschitz, we can use Case 1  and the fact that $\rho_k \leq \rho$ to conclude that
\begin{equation}
\liminf_{\veps \rightarrow 0} TV_{\veps}(u_\veps; \rho) \geq  \liminf_{\veps \rightarrow 0} TV_{\veps}(u_\veps ; \rho_k) \geq \sigma_\eta TV(u; \rho_k^2).
\label{liminfRhoNoLipsc}
\end{equation}
Using \eqref{TVweightedintermsofTV} and the monotone convergence theorem, we see that:
\begin{equation*}
\lim_{k \rightarrow \infty} TV(u; \rho_k^2) = \lim_{k \rightarrow \infty} \int_{D}\rho_k^2(x) d|Du|(x) = \int_{D}\rho^2(x) d|Du|(x) = TV(u; \rho^2).
\end{equation*}
Combining with \eqref{liminfRhoNoLipsc} yields the desired result.
\end{proof}

\subsection{Proof of Theorem \ref{NonlocalContinuousTVGamma}: The Limsup Inequality.}

\begin{proof}
\textbf{Case 1: $\rho$ is Lipschitz.}
We start by noting that since $\rho:D \rightarrow \R^d$ is a Lipschitz function, there exists an extension (that we denote by $\rho$ as well) to the entire $\R^d$ which has the same Lipschitz constant as the original $\rho$ and is bounded below by the same positive constant. Indeed, the extended function $\rho: \R^d \rightarrow \R$ can be defined by $\rho(x)= \inf_{y \in D} \rho(y) + \Lip(\rho)|x-y|$, where $\Lip(\rho)$ is the Lipschitz constant of $\rho$.

To prove the limsup inequality we show that  for every $u \in L^1(\rho)$:
\begin{equation}
\limsup_{\veps \rightarrow 0}TV_{ \veps}(u; \rho) \leq \sigma_\eta TV(u;\rho^2).
\label{limsuppoint}
\end{equation}
It suffices to show \eqref{limsuppoint}  for functions $u \in BV(D)$ (if the right hand side of \eqref{limsuppoint} is $+\infty$ there is nothing to prove). Since $D$ has Lipschitz boundary, for a given $u \in BV(D)$ we use Proposition 3.21 in \cite{AFP} to obtain an extension $\eu\in BV(\R^d)$  of $u$ to the entire space $\R^d$ with $\left| D\eu \right|\left( \partial D \right) =0$.
In particular from \eqref{TVweightedintermsofTVMeasure} we obtain
\begin{equation}
\left| D\eu \right|_{\rho^2}\left( \partial D \right) =0.
\label{DvAtBoundaryWeighted}
\end{equation}

We split the proof of  \eqref{limsuppoint} in two cases:

\textbf{Step 1:} Suppose that $\eta$ has compact support, i.e. assume there is $\alpha>0$ such that if $|h|\geq \alpha$ then $\eta(h)=0$. Let $D_{\veps}:= \left\{ x \in \R^d \::\: \dist(x, D)< \alpha\veps \right\}$. 
For $u \in BV(D)$,  Theorem 3.4 in \cite{baldi} and our assumptions on $\rho$ provide a sequence of functions $\left\{ v_k \right\}_{k \in \N} \in C^\infty(D_\veps) \cap BV(D_\veps)$ such that as $k \rightarrow \infty$
\begin{equation}
v_k \overset{L^1(D_\veps)}{\longrightarrow} \hat{u} \quad \te{ and }\quad \int_{D_\veps}|\nabla v_k(x)|\rho^2(x)dx \rightarrow |D\hat{u}|_{\rho^2} (D_\veps).
\label{SmoothApproxBVfunc}
\end{equation}
For every $k \in \N$
\begin{align*}
TV_{\veps}(v_k; \rho) &=  \frac{1}{\veps}\int_{D}\int_{D \cap B(y, \alpha \veps)}\eta_{\veps}(x-y) |v_k(x)-v_k(y)|\rho(x)\rho(y)dxdy 
\\&= \frac{1}{\veps}\int_{D}\int_{B(y, \alpha \veps)}\eta_{\veps}(x-y) \left| \int_{0}^{1} \nabla v_k(y+t(x-y)) \cdot (x-y) dt \right| \rho(x) \rho(y) dxdy
\\& \leq \frac{1}{\veps}\int_{D}\int_{B(y, \alpha \veps)} \int_{0}^{1} \eta_{\veps}(x-y) |\nabla v_k(y+t(x-y)) \cdot (x-y)|  \rho(x)\rho(y)dt dxdy 
\\& \leq \int_{D_{\veps}}\int_{|h| < \alpha} \int_{0}^{1}\eta(h)| \nabla v_k(z)\cdot h| \rho(z-t\veps h )\rho(z+(1-t)\veps h) dtdhdz
\\& = \int_{D_{\veps}}\int_{|h|< \alpha}\eta(h)| \nabla v_k(z)\cdot h| \rho(z)^2dhdz + a_{\veps,k} 
\\& = \sigma_\eta  \int_{D_{\veps}} \left| \nabla v_k(z)  \right| (\rho(z))^2dz + a_{\veps,k} ,
\end{align*}
where the last inequality is obtained after using the change of variables  $(t,y,x) \mapsto (t, h, z)$, $h=\frac{x-y}{\veps}$ and $z= y+t(x-y)$, noting that the Jacobian of this transformation is equal to $\veps^d$ and that the transformed set $D$ is contained in $D_{\veps}$. The last equality is obtained thanks to the fact that $\eta$ is radially symmetric. Finally the $a_{\veps,k}$ are given by
$$a_{\veps,k} = \int_{D_{\veps}}\int_{|h| < \alpha} \int_{0}^{1}\eta(h)| \nabla v_k(z)\cdot h| \left( \rho(z-t\veps h )\rho(z+(1-t)\veps h)- \rho(z)^2 \right) dtdhdz.$$
Since $\rho:\R^d \rightarrow \R$ is Lipschitz and since it is bounded below by a positive constant, it is straightforward to show that there exists a constant $C>0$ independent of $\veps$ and $k$ for which
\begin{equation*}
a_{\veps, k} \leq C \veps \int_{D_\veps} |\nabla v_k(x)|\rho^2(x) dx.
\end{equation*}
Using \eqref{SmoothApproxBVfunc} in particular we obtain that $v_k \converges{L^1(D)} u$ as $k \rightarrow \infty$. This together with continuity of $TV_{\veps}(\cdot; \rho)$ with respect to $L^1$-convergence implies that $TV_\veps(v_k; \rho) \rightarrow TV_\veps(u; \rho)$ as $k \rightarrow \infty$. Therefore, from the previous chain of inequalities and from \eqref{SmoothApproxBVfunc} we conclude that
\begin{equation}
TV_\veps(u; \rho) \leq \sigma_\eta |D\hat{u}|_{\rho^2}(D_\veps) + \limsup_{k \rightarrow \infty} a_{\veps,k} \leq \sigma_\eta |D\hat{u}|_{\rho^2}(D_\veps) +C \veps |D\hat{u}|_{\rho^2}(D_\veps).
\label{Auxi1483}
\end{equation} 
Using \eqref{DvAtBoundaryWeighted}, we deduce $\lim_{\veps \rightarrow 0} |D\hat{u}|_{\rho^2}(D_\veps) = |D\hat{u}|_{\rho^2}(\overline{D})=|D\hat{u}|_{\rho^2}(D)= TV(u; \rho^2)<\infty$. Combining with \eqref{Auxi1483} implies the desired estimate, \eqref{limsuppoint}.

\textbf{Step 2:} Consider $\eta$ whose support is not compact.  The needed control of $\eta$ at infinity is provided by the  condition (K3). 
For $\alpha>0$ define the kernel $\eta^\alpha(h):= \eta(h) \chi_{B(0,\alpha)}(h)$, which satisfies the conditions of Step 1. Denote by $TV_{\veps}^\alpha(\cdot, \rho)$ the nonlocal total variation using the kernel $\eta^\alpha$. For a given $u \in BV(D)$ 
$$TV_{\veps}(u; \rho) = TV_{\veps}^\alpha(u; \rho) + \frac{1}{\veps}\int_{D}\int_{\left\{x \in D \: : \: |x-y| >  \alpha\veps  \right\}} \eta_{\veps}(x-y) |u(x)-u(y)|\rho(x)\rho(y)dxdy. $$
The second term on the right-hand side satisfies:
\begin{align*}
\frac{1}{\veps}\int_{D}\int_{\left\{x \in D \: : \: |x-y| >  \alpha\veps  \right\}}  \eta_{\veps}&(x-y) |u(x)-u(y)| \rho(x) \rho(y)dxdy   
\\&=  \frac{1}{\veps}\int_{D}\int_{\left\{x \in D \: : \: |x-y| >  \alpha\veps  \right\}} \eta_{\veps}(x-y) |\eu(x)-\eu(y)| \rho(x)\rho(y)dxdy
\\& \leq ||\rho||_{L^\infty(D)}^2 \int_{|h| >\alpha} \eta(h)|h| \int_{\R^d}\frac{| \eu(y) - \eu(y+\veps h) |}{\veps |h|}dydh
\\ &\leq  ||\rho||_{L^\infty(D)}^2 |D\eu|(\R^d) \int_{|h|>\alpha}\eta(h)|h|dh,
\end{align*}
where the first inequality is obtained using the change of variables $h=\frac{x-y}{\veps}$ and the second inequality obtained using Lemma 13.33 in \cite{Leoni}. By Step 1 we conclude that:

\begin{align*}
\limsup_{\veps \rightarrow \infty}TV_{\veps}(u; \rho) & \leq  \limsup_{\veps \rightarrow \infty} TV_{\veps}^\alpha( u ; \rho) +|| \rho||_{L^\infty(\R^d)}^2  |D\eu|(\R^d) \int_{|h|>\alpha}\eta(h)|h|dh  
\\ & \leq \sigma_{\eta^\alpha}TV(u; \rho^2) +  ||\rho||_{L^\infty(\R^d)}^2|D\eu|(\R^d) \int_{|h|>\alpha}\eta(h)|h|dh.
\end{align*}
Taking  $\alpha$ to infinity and using condition (K3) on $\bm{\eta}$ implies \eqref{limsuppoint}.

\textbf{Case 2: $\rho$ is continuous but not necessarily Lipschitz.} The idea is to approximate $\rho$ from above by a family of Lipschitz functions $\left\{  \rho_{k} \right\}_{k \in \N}$.  Consider $\rho_k: D \rightarrow \R$ given by
\begin{equation}
\rho_k(x) := \sup_{y \in D} \rho(y) - k |x-y|.
\label{ApproxRhoFromAbove}
\end{equation}
The functions $\rho_k$ are Lipschitz functions which are bounded from below from and above by the same constants bounding $\rho$ from below and from above. Moreover, given that $\rho$ is continuous, it is simple to verify that for every $x \in D$, $\rho_k(x) \searrow \rho(x)$ as $k \rightarrow \infty$.

As in Step 1, it is enough to consider $u \in BV(D)$  and prove that:
\begin{equation*}
\limsup_{\veps \rightarrow 0} TV_\veps(u  ; \rho) \leq \sigma_\eta TV(u; \rho^2).
\end{equation*}
The proof of the limsup inequality in Case 1 and the fact that $\rho \leq \rho_k$ imply that
\begin{equation}
\limsup_{\veps \rightarrow 0} TV_\veps(u  ; \rho) \leq  \limsup_{\veps \rightarrow 0} TV_\veps(u  ; \rho_k) \leq \sigma_\eta TV(u; \rho_k^2).
\label{limsupRhoNoLipsc}
\end{equation}
By the dominated convergence theorem,
\begin{equation*}
\lim_{k \rightarrow \infty} TV(u; \rho_k^2) = \lim_{k \rightarrow \infty} \int_{D}\rho_k^2(x) d|Du|(x) = \int_{D}\rho^2(x) d|Du|(x) = TV(u; \rho^2).
\end{equation*}
Combining with \eqref{limsupRhoNoLipsc} provides the desired result.
\end{proof}

\begin{remark}
Note that using the liminf inequality and the proof of the limsup inequality we deduce the pointwise convergence of the functionals $TV_\veps(\cdot; \rho)$; namely, for every $u \in L^1(D, \rho)$:
$$ \lim_{\veps \rightarrow 0}TV_\veps(u; \rho) = \sigma_\eta TV(u; \rho^2).   $$
\end{remark}

\subsection{Proof of Theorem \ref{NonlocalContinuousTVGamma}: Compactness}

We first establish compactness for regular domains and then extend it to more general ones.
\begin{lemma} \label{LemmaCompactnessBall}
Let $D$ be a bounded, open, and connected set in $\R^d$, with $C^2$-boundary.
Let $\left\{  v_\veps\right\}_{\veps>0}$ be a sequence in $L^1(D,\rho)$ such that:
\begin{equation*}
\sup_{\veps>0} \|v_\veps \|_{L^1(D,\rho)} < \infty,
\label{porpCompac1}
\end{equation*}
and
\begin{align}
\sup_{\veps>0} TV_\veps(v_\veps; \rho) < \infty.
\label{porpCompac2}
\end{align}
Then, $\left\{  v_\veps\right\}_{\veps>0}$ is relatively compact in $L^1(D,\rho)$.
\end{lemma}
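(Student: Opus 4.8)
The plan is to reduce the compactness to the classical Fréchet--Kolmogorov criterion in $L^1$, which requires controlling the $L^1$-modulus of continuity $\|v_\veps(\cdot + h) - v_\veps\|_{L^1}$ uniformly as $|h| \to 0$. The first step is to observe that, since $\rho$ is bounded above and below by positive constants, it suffices to establish relative compactness in $L^1(D)$, and I may work with the unweighted quantities up to harmless constants. The key estimate I would aim for is: there exist constants $C>0$ and $\veps_0>0$ such that for every $h \in \R^d$ with $|h|$ small and every $\veps < |h|$ (or $\veps$ comparable to a fraction of $|h|$),
\[
\int_{D_h} |v_\veps(x+h) - v_\veps(x)|\,dx \; \leq \; C\,|h|\, TV_\veps(v_\veps;\rho) \;+\; o(1),
\]
where $D_h = \{x \in D : x + th \in D \ \forall t\in[0,1]\}$ (or a slightly shrunk version). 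This is the nonlocal analogue of the bound $\|v(\cdot+h) - v\|_{L^1}\leq |h|\,|Dv|(D)$ for BV functions, and it is exactly the type of estimate in the appendix of Alberti--Bellettini \cite{AB2} that the authors announced they would adapt.

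The core of the argument is deriving that estimate. Here I would exploit the assumption (K1): $\bm\eta(0)>0$ and $\bm\eta$ continuous at $0$, so there are $r_0>0$ and $c_0>0$ with $\eta(z)\geq c_0$ for $|z|\leq r_0$. Writing $h$ as a sum of $\lceil |h|/(\delta\veps)\rceil$ increments of size at most $\delta\veps$ (with $\delta < r_0$), I telescope:
\[
|v_\veps(x+h) - v_\veps(x)| \leq \sum_{k} |v_\veps(x + (k+1)\tfrac{h}{m}) - v_\veps(x + k\tfrac{h}{m})|,
\]
integrate in $x$ over an appropriate subdomain, and for each increment (of length $\leq \delta\veps \leq r_0\veps$) bound the integral by $\frac{1}{c_0 \veps^{-d}\cdot(\text{vol of a ball})}$ times a piece of the double integral defining $TV_\veps$ — indeed, averaging the shift over a small ball of radius comparable to $\delta\veps$ and using $\eta_\veps \geq c_0 \veps^{-d}$ there turns the finite difference into an integral against the kernel. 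Summing the $m \asymp |h|/(\delta\veps)$ pieces yields the factor $|h|/\veps \cdot \veps = |h|$ against $\veps\,TV_\veps$-type quantities, giving the claimed $C|h|\,TV_\veps(v_\veps;\rho)$. Combined with the uniform bound \eqref{porpCompac2}, this gives equicontinuity of translations, uniformly in $\veps$; together with the uniform $L^1$ bound \eqref{porpCompac1} and the boundedness of $D$ (tightness), Fréchet--Kolmogorov yields relative compactness in $L^1(D)$, hence in $L^1(D,\rho)$.

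The main obstacle is the boundary: the telescoping shift $x \mapsto x + kh/m$ pushes points out of $D$, so the estimate above only controls translations on an interior subdomain, and one cannot directly conclude compactness on $D$ itself. This is precisely where the $C^2$-regularity hypothesis enters. The plan is to use a bi-Lipschitz (in fact $C^1$) ``pushing-in'' deformation: cover $\partial D$ by finitely many charts in which $D$ looks like an epigraph, and construct a diffeomorphism $\Phi_\tau$ of $D$ onto a slightly shrunk domain $D_\tau \Subset D$ that moves every point inward by a controlled amount $\asymp \tau$, with $\Phi_\tau \to \mathrm{Id}$ as $\tau\to 0$ in $C^1$. Then I would estimate $\|v_\veps(\cdot+h) - v_\veps\|_{L^1(D)}$ by comparing $v_\veps$ with $v_\veps\circ\Phi_\tau$ (the deformation changes the $TV_\veps$ energy by a bounded factor, using the $C^1$ bounds on $\Phi_\tau$ and a change of variables in the double integral, as the kernel is only distorted by a factor close to $1$), applying the interior estimate to $v_\veps\circ\Phi_\tau$, and then sending $\tau\to 0$ after $|h|\to 0$. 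Making this deformation argument clean — in particular checking that the pushed-in copy's $TV_\veps$ stays uniformly bounded and that the error $\|v_\veps - v_\veps\circ\Phi_\tau\|_{L^1}$ is itself controlled by $\tau$ times the energy (again via the same nonlocal Poincaré-type trick along the short deformation path) — is the technical heart; it is the part the authors flag as requiring ``new arguments, due to the presence of domain boundary.'' Once the $C^2$ case is done, the extension to Lipschitz domains (for the full Theorem \ref{NonlocalContinuousTVGamma}) should follow by writing a Lipschitz domain as a union of images of balls under bi-Lipschitz maps and localizing, though that is carried out in the subsequent parts of the section rather than in this lemma.
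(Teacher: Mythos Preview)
Your approach is workable in spirit but takes a genuinely different route from the paper, and the boundary step you flag as ``the technical heart'' is where the two diverge most.

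The paper does \emph{not} push inward. Instead it extends each $v_\veps$ outward to all of $\R^d$ by reflection across $\partial D$: using the $C^2$ regularity, the nearest-point projection $P$ onto $\overline D$ is well defined and smooth in a tubular neighborhood $U$, and one sets $\hat x := 2Px - x$ (the reflected point) and $\tilde v_\veps(x) := \xi(|x-Px|)\,v_\veps(\hat x)$ for a cutoff $\xi$. The $C^2$ hypothesis is used to prove the elementary geometric bounds $|\hat x - y| < 2|x-y|$ for $x$ outside, $y\in D$, and $\tfrac14|x-y|<|\hat x-\hat y|<4|x-y|$ for $x,y$ in the outer collar. These, together with a change of variables, show that the whole-space nonlocal energy of $\tilde v_\veps$ is controlled by the energy of $v_\veps$ on $D\times D$ plus $\|v_\veps\|_{L^1}$. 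One then invokes the whole-space compactness argument of Alberti--Bellettini (Proposition 3.1 in \cite{AB2}) directly for $\{\tilde v_\veps\}$. No interior/boundary splitting, no deformation family $\Phi_\tau$, no separate estimate of $\|v_\veps - v_\veps\circ\Phi_\tau\|_{L^1}$.

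The advantage of the reflection route is that it cleanly sidesteps the very issue you identify: your step (c), bounding $\|v_\veps - v_\veps\circ\Phi_\tau\|_{L^1(D)}$ uniformly in $\veps$ via the telescoping trick, runs into trouble in the boundary layer of thickness $\sim\veps$, because the averaging balls you need may exit $D$ there, and you have no $L^\infty$ control to absorb the layer by a volume argument (the paper explicitly remarks that this lack of $L^\infty$ control is the new difficulty compared to \cite{AB2}). Your sketch could likely be completed by using half-balls or a more careful local flattening in that layer, but it is real additional work. The reflection argument avoids it entirely: once the functions live on $\R^d$ with bounded energy, Fr\'echet--Kolmogorov (as implemented in \cite{AB2}) applies without any boundary bookkeeping. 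Your final remark about passing to Lipschitz domains via bi-Lipschitz charts is exactly what the paper does next, in Proposition~\ref{CompacCont}.
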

\begin{proof}
Note that thanks to assumption (K1), we can find $a>0$ and $b>0$ such that the function $\bm{\tilde{\eta}}:[0, \infty) \rightarrow \left\{0, a \right\}$ defined as $\bm{\tilde{\eta}}(t)=a$ for $t<b$ and $\bm{\tilde{\eta}}(t)=0$ otherwise, is bounded above by $\bm{\eta}$. In particular, \eqref{porpCompac2} holds when changing $\eta$ for $\tilde{\eta}$ and so there is no loss of generality in assuming that $\bm{\eta}$ has the form of $\bm{\tilde{\eta}}$. Also, since $\rho$ is bounded below and above by positive constants, it is enough to consider $\rho \equiv 1$.

We first extend each function $v_\veps$ to $\mathbb{R}^d$ in a suitable way. 
Since $\partial D$ is a compact $C^2$ manifold, there exists $\delta>0$ such that for
every $x \in \R^d$ for which $d(x, \partial D) \leq \delta$ there exists
 a unique closest point on $\partial D$. 
 For all $x  \in U := \{ x \in \R^d \::\: d(x,D) < \delta\}$ let $Px$ be the closest point to $x$ in $\overline D$. 
  We define the local reflection mapping from $U$ to $\overline  D$ by $\hat x = 2Px - x$. Let $\xi$ be a smooth cut-off function such that $\xi(s) = 1$ if $s \leq \delta/8$ and $\xi(s) = 0$ if $s \geq \delta/4$.
We define  an auxiliary function $\hat v_\veps$ on $U$, by $\hat{v}_\veps(x) := v_\veps(\hat{x})  $ and the desired  extended function $\tilde v_\veps$ on $\R^d$ by   $\tilde v_\veps(x) = \xi(|x - Px|) v_\veps(\hat x)$.

 We claim that:
\begin{equation} \label{bound_tildev}
\sup_{\veps >0} \frac{1}{\veps}\int_{\R^d}\int_{\R^d} \eta_{\veps}(x-y) | \tilde{v}_\veps(x) - \tilde{v}_\veps (y)|  < \infty.
\end{equation}
To show the claim we first establish the following geometric properties: Let  $W:= \{ x \in \R^d \backslash D \::\: d(x,D) < \delta/4\}$  and $V := \{ x \in \R^d \backslash D \::\: d(x,D) < \delta/8\}$. For all $x \in W$ and all $y \in D$
\begin{equation} \label{gfoi}
 |\hat x - y| <  2 |x-y|. 
\end{equation}
Since the mapping $x \mapsto \hat x$ is smooth and invertible on $W$, it is bi-Lipschitz. While this would be enough for our argument, we present an argument which establishes the value of the Lipschitz constant:
for all $x,y \in W$
\begin{equation} \label{gfoo}
\frac{1}{4} |x-y| <  |\hat x - \hat y| <  4 |x-y|.
\end{equation}
By definition of $\delta$ the domain $D$ satisfies the outside and inside ball conditions with radius $\delta$. 
Therefore if $x \in W$ and $z \in \overline D$
\[ \left |z  - \left(Px + \delta \frac{x - Px}{|x - Px|}  \right)  \right| \geq \delta. \]
Squaring and straightforward algebra yield
\begin{equation} \label{aux1}
|z -Px|^2 \geq 2 \delta (z - Px) \cdot \frac{x - Px}{|x - Px|}. 
\end{equation}
For $x \in W$ and $y \in D$, using \eqref{aux1} we obtain
\begin{align*}
|y - \hat x|^2 - |y - x|^2 &= |y - Px + (x - Px)|^2 - |y - Px - (x - Px)|^2 \\
& = 4 (y -Px) \cdot (x -Px) \leq \frac{2}{\delta} |y - Px|^2 \, |x - Px| \\
& \leq \frac{1}{2} |y - Px|^2 \leq |y-x|^2 + |x -Px|^2 \leq 2 |y-x|^2.
\end{align*}
Therefore $|y - \hat x|^2 \leq  3|y - x|^2$, which establishes \eqref{gfoi}.

For distinct $x,y \in W$ using \eqref{aux1}, with $z = Py$ and with $z =Px$, follows
\begin{align*}
|x-y| & \geq (x-y) \cdot \frac{Px -Py}{|Px -Py|} =(x -Px - (y - Py) + Px -Py) \cdot  \frac{Px -Py}{|Px -Py|} \\
& \geq |Px -Py| - \frac{1}{2\delta}( |x -Px| \, |Py -Px| + |y - Py| \, |Py - Px|) \\
& \geq |Px -Py| \, \frac{3}{4}.
\end{align*}
Therefore 
\[ |\hat x -\hat y | = |2Px - x + 2 Py - y| \leq 2 |Px -Py| + |x-y|  \leq \left(\frac{8}{3} + 1 \right) |x - y| \leq 4 |x-y|.\] 
Since the roles on $x,y$ and $\hat x, \hat y$ can be reversed it follows that $|x-y| \leq 4 |\hat x - \hat y|$.
These estimates establish \eqref{gfoo}.

We now return to proving \eqref{bound_tildev}.  For $\veps$ small enough,
\begin{align*}
\frac{1}{\veps}\int_{\R^n \backslash D}\int_{D} \eta_{\veps} (x-y) | \tilde{v}_\veps(x) - \tilde{v}_\veps(y)  | dx dy & =  \frac{1}{\veps}\int_{V}\int_{D} \eta_{\veps} (x-y) | \hat{v}_\veps(x) - \hat{v}_\veps(y)  | dx dy \\
& =  \frac{1}{\veps}\int_{V}\int_{D} \eta_{\veps} (x-y) | v_\veps(\hat x) - v_\veps(y)  | dx dy \\
& \leq  \frac{4^d}{\veps}\int_{V}\int_{D} \eta_{4\veps} (\hat x-y) | v_\veps(x) - v_\veps(\hat{y})  | dx dy \\
& \leq \frac{16^d}{\veps}\int_{D}\int_{D} \eta_{4\veps} (z-y) | v_\veps(x) - v_\veps(z)  | dz dy,
\end{align*}
where the first inequality follows from \eqref{gfoi} and the second follows from the fact that the change of variables $x \mapsto \hat x$ is bi-Lipschitz as shown in \eqref{gfoo}.
Also,
\begin{align*}
\frac{1}{\veps}\int_{\R^d \backslash D}\int_{\R^d \backslash D}  \eta_{\veps} (x-y) & | \tilde{v}_\veps(x) - \tilde{v}_\veps(y)  | dx dy  \\
= &  \frac{1}{\veps}\int_{W}\int_{W} \eta_{\veps} (x-y) |  \xi(x)\hat{v}_\veps(x) - \xi(y) \hat{v}_\veps(y)  | dx dy
\\ \leq & \frac{1}{\veps}\int_{W}\int_{W} \eta_{\veps} (x-y) | \xi(x) - \xi(y)  | |\hat{v}_\veps(x)| dx dy
\\& +  \frac{1}{\veps}\int_{W}\int_{W} \eta_{\veps} (x-y) | \hat{v}_\veps(x) - \hat{v}_\veps(y)  | |\xi(y)| dx dy.
\end{align*}
Note that for all $x \not = y$, $\frac{\eta_{\veps} (x-y)}{\veps} \leq \frac{b}{|x-y|} \eta_{\veps} (x-y)$. Therefore:
\begin{align*}
\frac{1}{\veps}\int_{W}\int_{W} \eta_{\veps} (x-y)  | \xi(x) - \xi(y)  | |\hat{v}_\veps(x)| dx dy  &  \leq  b \int_{W}\int_{W} \eta_{\veps} (x-y) \frac{| \xi(x) - \xi(y)  |}{|x-y|} |\hat{v}_\veps(x)| dx dy 
\\  & \leq b \Lip(\xi) \int_{W}\int_{W} \eta_{\veps} (x-y) |\hat{v}_\veps(x)| dx dy 
\\ & \leq 4^d\, b \Lip(\xi) \| v_\veps\|_{L^1(D)},
\end{align*}
where we used \eqref{gfoo} and change of variables to establish the last inequality. Also,
\begin{align*}
 \frac{1}{\veps}\int_{W}\int_{W} \eta_{\veps} (x-y) | \hat{v}_\veps(x) - \hat{v}_\veps(y)    | |\xi(y)| dx dy 
  & \leq  \frac{4^d}{\veps}\int_{W}\int_{W} \eta_{4\veps} (\hat{x}-\hat{y}) | \hat{v}_\veps(x) - \hat{v}_\veps(y)  | dx dy
 \\ & \leq  \frac{4^{3d}}{\veps}\int_{D}\int_{D} \eta_{4\veps} (x-y) | v_\veps(x) - v_\veps(y)  | dx dy.
\end{align*}  
The first inequality is obtained thanks to the fact that $|\xi(y)| \leq 1$ and  \eqref{gfoo}, while the second inequality is obtained by a change of variables.

Using that 
\[ \int_{D}\int_{D} \eta_{4\veps} (x-y) | v_\veps(x) - v_\veps(y)  | dx dy \leq 4^d  \int_{D}\int_{D} \eta_{\veps} (x-y) | v_\veps(x) - v_\veps(y)  | dx dy \]
by combining the above inequalities we conclude that 
\begin{align*}
\sup_{\veps>0} \frac{1}{\veps} \int_{\R^d}\int_{\R^d} & \eta_{\veps}(x-y) | \tilde{v}_\veps(x) - \tilde{v}_\veps (y)| dxdy \\   & \leq 
C \sup_{\veps>0} \left(\int_{D}\int_{D} \eta_{\veps} (x-y) | v_\veps(x) - v_\veps(y)  | dx dy + \| v_\veps \|_{L^1(D)} \right)<\infty. 
\end{align*}
Using the proof of Proposition 3.1 in \cite{AB2} we deduce that the sequence $\{\tilde v_\veps\}_{\veps>0}$ is relatively compact in $L^1(\R^d)$  which implies that the sequence $\left\{v_\veps \right\}_{\veps>0}$ is relatively compact in $L^1(D)$.
\end{proof}

\begin{remark}
We remark that the difference between the compactness result we proved above and the one proved in Proposition 3.1 in \cite{AB2} is the fact that we consider functions bounded in $L^1$, instead of bounded in $L^\infty$ as was assumed in  \cite{AB2}. Nevertheless, after extending the functions to the entire $\R^d$ as above, one can directly apply the proof in \cite{AB2} to obtain the desired compactness result.
\end{remark}

\begin{proposition}
Let $D$ be a bounded, open, and connected set in $\R^d$, with Lipschitz boundary. 
Suppose that the sequence of functions $\left\{ u_\veps \right\}_{\veps>0}\subseteq L^1(D,\rho) $  satisfies:
\begin{align*}
\sup_{\veps>0} \|u_\veps\|_{L^1(D,\rho)} & < \infty,
\\ \sup_{\veps>0} TV_{\veps}(u_\veps; \rho) & < \infty.
\end{align*}
Then, $\left\{ u_\veps \right\}_{\veps>0}$ is relatively compact in $L^1(D,\rho)$.
\label{CompacCont}
\end{proposition}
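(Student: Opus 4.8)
The plan is to localize with a finite partition of unity and, on each chart that meets $\partial D$, to reduce to the $C^2$ situation of Lemma~\ref{LemmaCompactnessBall} by replacing the nearest-point reflection used there with the reflection across the Lipschitz graph describing $\partial D$ locally. As a first reduction, exactly as in the proof of Lemma~\ref{LemmaCompactnessBall}, one may assume that $\bm{\eta}$ equals a positive constant on an interval $[0,b)$ and vanishes afterwards (so that $\frac{\eta_\veps(x-y)}{\veps}\le \frac{b}{|x-y|}\eta_\veps(x-y)$ for $x\neq y$) and that $\rho\equiv 1$.

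Since $\partial D$ is Lipschitz, pick a finite open cover $\overline D\subset U_0\cup\dots\cup U_N$ with $U_0$ compactly contained in $D$ and, for $j\ge 1$, a rigid motion $R_j$ and a Lipschitz function $\gamma_j$ such that in the $R_j$-coordinates $D\cap U_j=\{(x',x_d):x_d>\gamma_j(x')\}\cap U_j$; let $\{\phi_j\}_{j=0}^N$ be a Lipschitz partition of unity subordinate to $\{U_j\}$. Writing $u_\veps=\sum_j \phi_j u_\veps$ and using that a finite sum of $L^1(D)$-relatively compact sequences is $L^1(D)$-relatively compact, it is enough to prove each $\{\phi_j u_\veps\}_{\veps>0}$ relatively compact in $L^1(D)$. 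The splitting $|\phi_j(x)u_\veps(x)-\phi_j(y)u_\veps(y)|\le \phi_j(x)|u_\veps(x)-u_\veps(y)|+|u_\veps(y)||\phi_j(x)-\phi_j(y)|$, together with $\Lip(\phi_j)<\infty$ and the pointwise bound on $\eta_\veps/\veps$, shows $\sup_{\veps>0}(\|\phi_j u_\veps\|_{L^1(D)}+TV_\veps(\phi_j u_\veps;1))<\infty$.

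For $j=0$ the function $\phi_0 u_\veps$ is supported in a fixed compact subset of $D$, so for small $\veps$ its extension by zero to $\R^d$ has the same nonlocal energy and is $L^1(\R^d)$-relatively compact by Proposition~3.1 of \cite{AB2}, just as at the end of Lemma~\ref{LemmaCompactnessBall}. For $j\ge 1$ I would pass to the $R_j$-coordinates (kept unlabeled), where $\phi_j u_\veps$ is supported in a compact $K_j\subset U_j$ and $D$ agrees near $K_j$ with the supergraph of $\gamma_j$. Set $\hat x:=(x',2\gamma_j(x')-x_d)$; because $\gamma_j$ is Lipschitz this is an involution, it is globally bi-Lipschitz with $\frac1C|x-y|\le|\hat x-\hat y|\le C|x-y|$ for a constant $C=C(\Lip\gamma_j)$, and an elementary estimate (distinguishing whether $|\gamma_j(x')-\gamma_j(y')|$ is larger or smaller than a fixed multiple of the heights of $x$ and $y$ over the graph) also gives $|\hat x-y|\le C|x-y|$ whenever $x$ lies below and $y$ above the graph. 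These are the precise analogues of \eqref{gfoo} and \eqref{gfoi}. With a cut-off $\xi$ as in Lemma~\ref{LemmaCompactnessBall}, defining $\tilde v_\veps:=\phi_j u_\veps$ above the graph, $\tilde v_\veps(x):=\xi(|x_d-\gamma_j(x')|)\,\phi_j u_\veps(\hat x)$ below it, and $\tilde v_\veps:=0$ away from $K_j$, the same chain of changes of variables as in Lemma~\ref{LemmaCompactnessBall} yields $\sup_{\veps>0}\frac1\veps\iint_{\R^d\times\R^d}\eta_\veps(x-y)|\tilde v_\veps(x)-\tilde v_\veps(y)|\,dx\,dy<\infty$; Proposition~3.1 of \cite{AB2} then gives relative compactness of $\{\tilde v_\veps\}$ in $L^1(\R^d)$, hence of $\{\phi_j u_\veps\}$ in $L^1(D)$.

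Summing over $j$ gives relative compactness of $\{u_\veps\}$ in $L^1(D)$, which, since $\rho$ is bounded above and below by positive constants, is the same as relative compactness in $L^1(D,\rho)$. I expect the only real obstacle to be the geometric input of the preceding paragraph: verifying that the Lipschitz-graph reflection is bi-Lipschitz and satisfies $|\hat x-y|\le C|x-y|$ with $C$ depending only on $\Lip\gamma_j$, so that the change-of-variables estimates of Lemma~\ref{LemmaCompactnessBall} carry over with $\eta_\veps$ merely rescaled by a dimensional constant; the rest is bookkeeping with the partition of unity.
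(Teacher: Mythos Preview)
Your approach is correct, but it is genuinely different from the paper's. The paper does not redo the reflection argument: instead it invokes a result of Ball--Zarnescu to the effect that any bounded Lipschitz domain $D$ is the image of a bounded $C^\infty$ domain $\tilde D$ under a global bi-Lipschitz homeomorphism $\Theta:\tilde D\to D$. Setting $v_\veps=u_\veps\circ\Theta$ and $\bm{\hat\eta}(s)=\bm{\eta}(\Lip(\Theta)\,s)$, a single change of variables shows that $\{v_\veps\}$ satisfies the hypotheses of Lemma~\ref{LemmaCompactnessBall} on $\tilde D$ with kernel $\hat\eta$; compactness of $\{v_\veps\}$ in $L^1(\tilde D)$ then transfers back to $\{u_\veps\}$ in $L^1(D)$. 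No partition of unity and no reworking of the reflection are needed.

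Your route, by contrast, localizes with a partition of unity and replaces the $C^2$ nearest-point reflection by the explicit Lipschitz-graph reflection $\hat x=(x',2\gamma_j(x')-x_d)$ in each boundary chart. The geometric inputs you identify are exactly what is required, and they do hold with constants depending only on $\Lip\gamma_j$: the bi-Lipschitz bound is immediate, and the crossed estimate $|\hat x-y|\le C|x-y|$ (for $x$ below and $y$ above the graph) follows by writing $\hat x_d-y_d=(x_d-y_d)+2h_x$ with $h_x=\gamma_j(x')-x_d>0$ and then bounding $h_x$ by a multiple of $|x-y|$ via a case split on whether $h_x+h_y$ dominates $|\gamma_j(x')-\gamma_j(y')|$. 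Once these are in place, the chain of changes of variables from Lemma~\ref{LemmaCompactnessBall} carries over verbatim with $\eta_\veps$ replaced by $\eta_{C\veps}$, and Proposition~3.1 of \cite{AB2} concludes. The trade-off is clear: the paper's argument is much shorter but imports a nontrivial extrinsic fact (bi-Lipschitz equivalence of Lipschitz and smooth domains), whereas your argument is longer but self-contained, needing only the local graph description of a Lipschitz boundary.
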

%We remark that the proposition applies to
%any domain which is bi-Lipschitz homeomorphic to  $ (0,1)^d$, since $(0,1)^d$ is bi-Lipschitz homeomorphic to the ball $B(0,1)$ by, for example, linear rescaling along each ray from the origin.
\begin{proof}
Suppose $\left\{ u_\veps \right\}_{\veps>0}\subseteq L^1(D) $ is as in the statement. As in Lemma \ref{LemmaCompactnessBall}, we can assume that  $\rho \equiv 1$. By Remark 5.3 in \cite{BallZ}, there exists a bi-Lipschitz map $\Theta: \tilde{D} \rightarrow D$ where $\tilde{D}$ is a domain with smooth boundary. For every $\veps>0$ consider the function $v_\veps : =u_\veps \circ \Theta$ and set $\bm{\hat{\eta}}(s):= \bm{\eta}\left(\Lip(\Theta) \: s \right)$, $s \in \R$.

Since $\Theta$ is bi-Lipchitz we can use a change of variables, to conclude that there exists a constant $C>0$ (only depending on $\Theta$) such that:
\begin{equation*}
\int_{\tilde D} |v_\veps(x)| dx  \leq C \int_{D} |u_\veps(y)| dy,
\end{equation*}
and 

\begin{align*}
C\! \int_{D}\int_{D}\eta_{\veps}(x-y) \left| u_\veps(x) - u_\veps(y) \right| dx dy & \geq  \int_{\tilde D}\int_{\tilde D}\eta_{\veps} \left(\Theta(x)-\Theta(y) \right) \left| v_\veps(x) - v_\veps(y) \right| dx dy
\\& \geq  \int_{\tilde D}\int_{\tilde D} \hat{\eta}_{\veps}(x-y) \left| v_\veps(x) - v_\veps(y) \right| dx dy.
\end{align*}
The second inequality using the fact that $\eta$ is non-increasing (assumption (K2)). We conclude that the sequence $ \left\{ v_\veps \right\}_{\veps>0} \subseteq L^1(\tilde D)$ satisfies the hypothesis of Lemma \ref{LemmaCompactnessBall} (taking $\bm{\eta}= \bm{\hat{\eta}}$). Therefore, $\left\{v_\veps \right\}_{\veps>0}$ is relatively compact in $L^1(\tilde D)$, which implies that $\left\{u_\veps \right\}_{\veps>0}$ is relatively compact in $L^1(D)$.
\end{proof}

\begin{corollary}
Let $D$ be a bounded, open, and connected set in $\R^d$. Suppose that the sequence of functions $\left\{ u_\veps \right\}_{\veps>0}\subseteq L^1(D,\rho) $  satisfies:
\begin{align*}
\sup_{\veps>0} \|u_\veps\|_{L^1(D,\rho)} & < \infty,
\\ \sup_{\veps>0} TV_{\veps}(u_\veps; \rho) & < \infty.
\end{align*}
Then, $\left\{ u_\veps \right\}_{\veps>0}$ is locally relatively compact in $L^1(D,\rho)$. 

In particular if
\begin{align*}
\sup_{\veps>0} \|u_\veps\|_{L^\infty(D)} & < \infty,
\end{align*}
then, $\left\{ u_\veps \right\}_{\veps>0}$ is relatively compact in $L^1(D,\rho)$.
\label{CompacContLinfty}
\end{corollary}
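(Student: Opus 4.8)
The plan is to bootstrap from the ball compactness already established in Lemma~\ref{LemmaCompactnessBall} (equivalently Proposition~\ref{CompacCont} on Lipschitz domains) via a covering argument to obtain the local statement, and then to upgrade local compactness to global compactness by using the uniform $L^\infty$ bound to control the boundary layer.

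\emph{Local relative compactness.} First I would fix an open set $D'\Subset D$ and cover the compact set $\overline{D'}$ by finitely many open balls $B_1,\dots,B_N$ with $\overline{B_i}\subset D$. Each $B_i$ is bounded, open, connected and has $C^\infty$ boundary, so Lemma~\ref{LemmaCompactnessBall} applies to it once one checks its hypotheses for $\{u_\veps|_{B_i}\}_{\veps>0}$; these hold for free, because restricting the domain of integration only decreases the functionals: $\|u_\veps\|_{L^1(B_i,\rho)}\le\|u_\veps\|_{L^1(D,\rho)}$ and, since $B_i\times B_i\subset D\times D$, the nonlocal total variation of $u_\veps$ over $B_i$ is bounded by $TV_\veps(u_\veps;\rho)$ computed over $D$. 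Hence $\{u_\veps|_{B_i}\}_{\veps>0}$ is relatively compact in $L^1(B_i,\rho)$ for each $i$. From an arbitrary subsequence of $\{u_\veps\}_{\veps>0}$ I would then extract, by $N$ successive extractions, a further subsequence converging in $L^1(B_i,\rho)$ for every $i$; the limits agree almost everywhere on overlaps, glue to a single function on $\bigcup_i B_i\supseteq D'$, and subadditivity of the integral over the covering yields convergence in $L^1\big(\bigcup_i B_i,\rho\big)$, hence in $L^1(D',\rho)$. Since $\rho$ is bounded above and below by positive constants, the $L^1(D')$ and $L^1(D',\rho)$ topologies coincide, which proves the first assertion.

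\emph{From local to global under the $L^\infty$ bound.} Now set $M:=\sup_{\veps>0}\|u_\veps\|_{L^\infty(D)}<\infty$ and take the exhaustion $D_k:=\{x\in D:\dist(x,\partial D)>1/k\}$; these are open with $\overline{D_k}\subset D$ compact, $D_k\nearrow D$, and $\vol(D\setminus D_k)\to0$ as $k\to\infty$. Given any subsequence of $\{u_\veps\}_{\veps>0}$, by the local statement and a diagonal argument in $k$ I would pass to a further subsequence (not relabelled) converging in $L^1(D_k,\rho)$ for every $k$ to some $u\in L^1_{\mathrm{loc}}(D)$ with $|u|\le M$ a.e. Then, given $\theta>0$, one picks $k$ with $2M\|\rho\|_{L^\infty(D)}\vol(D\setminus D_k)<\theta/2$ and estimates, for $j$ large, $\int_{D_k}|u_{\veps_j}-u|\,\rho\,dx<\theta/2$ and $\int_{D\setminus D_k}|u_{\veps_j}-u|\,\rho\,dx\le 2M\|\rho\|_{L^\infty(D)}\vol(D\setminus D_k)<\theta/2$, whence $\|u_{\veps_j}-u\|_{L^1(D,\rho)}<\theta$. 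Thus every subsequence of $\{u_\veps\}_{\veps>0}$ has a further subsequence converging in $L^1(D,\rho)$, which is relative compactness.

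\emph{Main difficulty.} There is no serious obstacle here. The only feature distinguishing this corollary from Proposition~\ref{CompacCont} is that $D$ is not assumed to have Lipschitz boundary, so the global extension/reflection device of Lemma~\ref{LemmaCompactnessBall} is unavailable; the extra $L^\infty$ bound is exactly what is needed so that the boundary layer $D\setminus D_k$ carries a uniformly negligible $L^1$-mass, compensating for the lost boundary control. The remaining ingredients — the finite ball covering, the successive and diagonal extractions, and the equivalence of the $L^1$ and $L^1(\rho)$ norms — are routine.
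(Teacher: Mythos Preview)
Your proposal is correct and follows essentially the same route as the paper: apply Lemma~\ref{LemmaCompactnessBall} on balls compactly contained in $D$, patch them together via a finite cover to get local compactness, and then use an exhaustion $D_k\nearrow D$ together with the uniform $L^\infty$ bound (yielding $\sup_\veps\|u_\veps\|_{L^1(D\setminus D_k,\rho)}\to 0$) and a diagonal argument to upgrade to global compactness. The only difference is that you spell out more of the routine details than the paper does.
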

\begin{proof}
If $B$ is a ball compactly contained in $D$  then the relative compactness of $\{u_\veps\}_{\veps>0}$ in $L^1(B,\rho)$ follows from Lemma \ref{LemmaCompactnessBall}.
We note that if compactness holds on two sets $D_1$ and $D_2$ compactly contained in $D$, then it holds on their union. Therefore it holds on any set compactly contained in $D$, since it can be covered by finitely many balls contained in $D$.

The compactness in $L^1(D,\rho)$ under the $L^\infty$ boundedness follows via a diagonal argument. This can be achieved by  
approximating $D$ by compact subsets: $\overline D_k \subset D$, $D = \cup_k D_k$, and using the fact that
$\lim_{k \to \infty} \sup_{\veps >0}  \|u_\veps\|_{L^1(D \backslash D_k,\rho)} = 0$.
\end{proof}

\section{$\Gamma$-Convergence of Total Variation on Graphs}
\label{GammaConv}

\subsection{Proof of Theorems \ref{DiscreteGamma} and  \ref{compact}}
\label{ProofMainResult}
 Let $D \subset \R^d$, $d \geq 2$ be an open, bounded and connected set with Lipschitz boundary.
Assume $\nu$ is a probability measure on $D$ with continuous density $\rho$, which is
 bounded from below and above by positive constants. 
Let $\left\{ \veps_n \right\}_{n \in \N}$ be a sequence of positive numbers converging to $0$ satisfying assumption \eqref{HypothesisEpsilon}.

\begin{proof}[Proof of Theorem \ref{DiscreteGamma}]
We use the sequence of transportation maps $\left\{ T_n\right\}_{n \in \N}$ considered in Section \ref{OptimalMatchingResults}.  Let $\omega \in \Omega$ be such that
\eqref{InifinityTransportEstimate d=2} and \eqref{InifinityTransportEstimate d>2} hold in cases  $d=2$ and $d \geq 3$ respectively.  By Theorem \ref{thm:InifinityTransportEstimate} the complement in $\Omega$ of such $\omega$'s is contained in a set of probability zero. 

\textbf{Step 1:}
Suppose first that $\bm{\eta}$ is of the form $\bm{\eta}(t) = a$ for $t < b$ and $\bm{\eta}=0$ for $t>b$, where $a,b$ are two positive constants. Note it does not matter what value we give to $\bm{\eta}$ at $b$. The key idea in the proof is that the estimates of the Section \ref{OptimalMatchingResults} on transportation maps imply that the transportation happens on a length scale which is small compared to $\veps_n$. 
By taking a kernel with slightly smaller 'radius' than $\veps_n$ we can then obtain a lower bound, and by taking a slightly larger radius a matching upper bound on the graph total variation.

\textbf{Liminf inequality:} Assume that $u_n \overset{TL^1}{\longrightarrow} u$ as $n \rightarrow \infty$. Since  $T_{n\sharp} \nu= \nu_n$, using the change of variables \eqref{chofvar} it follows that
\begin{equation}
\tTV_{n , \veps_n}(u_n) = \frac{1}{\veps_n} \int_{D \times D} \eta_{\veps_n}\left(T_n(x) - T_n(y) \right)\left| u_n\circ T_n(x)  - u_n \circ T_n(y) \right|\rho(x)\rho(y) dx dy.
\label{RepTvGraph}
\end{equation}  
Note that for Lebesgue almost every $(x,y) \in D \times D $ 
\begin{equation}
\left| T_n(x) - T_n(y) \right| > b\veps_n \Rightarrow |x-y| >b \veps_n - 2\|Id - T_n\|_\infty.
\label{implication}
\end{equation}

Thanks to the assumptions on $\left\{  \veps_n \right\}_{n \in \N}$ 
(\eqref{InifinityTransportEstimate d=2} and \eqref{InifinityTransportEstimate d>2} in cases  $d=2$ and $d \geq 3$ respectively), for large enough $n \in \N$:
\begin{equation*}
\tilde{\veps}_n: =  \veps_n - \frac{2}{b}\|Id - T_n\|_\infty >0.
\end{equation*}
By \eqref{implication}, for large enough $n$ and for almost every $(x,y) \in D\times D$, 
\begin{equation*}
\bm{\eta}\left(\frac{|x-y|}{ \tilde{\veps}_n} \right) \leq \bm{\eta}\left( \frac{|T_n(x)- T_n(y)|}{\veps_n} \right).
\end{equation*}
Let $\tilde u_n = u_n \circ T_n$.
Thanks to the previous inequality and \eqref{RepTvGraph}, for large enough $n$ 
\begin{align*}
\tTV_{n , \veps_n}(u_n) \geq & \frac{1}{\veps_n^{d+1}} \int_{D \times D}\bm{ \eta}\left(\frac{|x-y|}{\tilde{\veps}_n}\right) \left| \tilde u_n(x) - \tilde u_n(y) \right|\rho(x)\rho(y) dxdy
\\ = &  \left(\frac{\tilde{\veps}_n}{\veps_n} \right)^{d+1} TV_{\tilde{\veps}_n}\left(\tilde u_n; \rho \right).
\end{align*}
Note that $\frac{\tilde{\veps}_n}{\veps_n} \rightarrow 1$ as $n \rightarrow \infty$ and that $u_n \overset{{TL^1}}{\longrightarrow} u$ implies  $\tilde u_n \overset{{L^1(D)}}{\longrightarrow} u$ as $n \rightarrow \infty$. We deduce from Theorem \ref{NonlocalContinuousTVGamma} that $  \liminf_{n \rightarrow \infty} TV_{\tilde{\veps}_n}\left(\tilde u_n; \rho \right) \geq \sigma_\eta TV(u; \rho^2)  $ and hence:
\begin{equation*}
\liminf_{n \rightarrow \infty} \tTV_{n , \veps_n}(u_n) \geq \sigma_\eta TV(u; \rho^2).
\end{equation*}

\textbf{Limsup inequality:}
By Remark \ref{DenseGamma} and Proposition \ref{Approximation inBV(D)}, it is enough to prove the limsup inequality for Lipschitz continuous  functions $u : D \rightarrow \R $. Define $u_n$ to be the restriction of $u$ to the first $n$ data points $X_1, \dots , X_n$. Consider $\tilde{\veps}_n := \veps_n + \frac{2}{b}\|Id - T_n\|_\infty$ and let $\tilde u_n = u_n \circ T_n$. Then note that for Lebesgue almost every $(x,y)  \in D\times D$ 
\begin{equation*}
\bm{\eta}\left(\frac{|T_n(x)-T_n(y)|}{ \veps_n} \right) \leq \bm{\eta}\left( \frac{|x- y|}{ \tilde{\veps}_n} \right).
\end{equation*}
Then for all $n$ 
\begin{align}
\begin{split}
\frac{1}{\tilde{\veps}_n^{d+1}} \int_{D \times D} \bm{ \eta} & \left(  \frac{|T_n(x)- T_n(y)|}{\veps_n} \right)  \left| \tilde u_n(x) - \tilde u_n(y) \right| \rho(x)\rho(y) dxdy
\\ & \leq \frac{1}{\tilde{\veps}_n}\int_{D \times D} \eta_{\tilde{\veps}_n}\left(x-y \right) \left|\tilde u_n (x) - \tilde u_n(y)  \right| \rho(x)\rho(y) dxdy.
\end{split}
\label{Ineq111}
\end{align}
Also
\begin{align}
\begin{split}
\frac{1}{\tilde{\veps}_n} & \left|  \int_{D \times D} \eta_{\tilde{\veps}_n}(x-y)(|u(x) - u(y)| - |u\circ T_n(x) - u\circ T_n(y)| ) \rho(x)\rho(y) dxdy  \right|
\\ & \leq \frac{2}{\tilde{\veps}_n}  \int_{D \times D} \eta_{\tilde{\veps}_n}(x-y)|u(x) - u\circ T_n(x)| \rho(x) \rho(y)dxdy
\\ & \leq  \frac{2C\Lip(u)||\rho||_{L^\infty(D)}^2}{\tilde{\veps}_n} \int_{D}|x-T_n(x)| dx,
\end{split}
\label{AssympLips}
\end{align}
where $C= \int_{\mathbb{R}^d} \eta(h) dh$. The last term of the previous expression goes to $0$ as $n \rightarrow \infty$, yielding
\begin{align*}
 \lim_{n \rightarrow \infty} \frac{1}{\tilde{\veps}_n} & \left(  \int_{D \times D} \eta_{\tilde{\veps}_n}(x-y)|u(x)
 - u(y)| \rho(x) \rho(y)dxdy  \right. \\
& \left. \quad  -  \int_{D \times D} \eta_{\tilde{\veps}_n}(x-y)|u\circ T_n(x) - u\circ T_n(y)| \rho(x) \rho(y)dxdy    \right)=0.
\end{align*}
Since $\frac{\veps_n}{\tilde{\veps}_n} \rightarrow 1$ as $n \rightarrow \infty$, using \eqref{Ineq111} we deduce :
\begin{align*}
\limsup_{n \rightarrow \infty} \tTV_{n , \veps_n}(u_n) 
= & \limsup_{n \rightarrow \infty}\frac{1}{\tilde{\veps}_{n}^{d+1}} \int_{D \times D}\bm{ \eta} \left(  \frac{|T_n(x)- T_n(y)|}{\veps_n} \right) \left| u \circ T_n(x) - u \circ T_n(y) \right| \rho(x)\rho(y)dxdy
\\ \leq & \limsup_{n \rightarrow \infty} \frac{1}{\tilde{\veps}_n} \int_{D \times D} \eta_{\tilde{\veps}_n}(x-y) \left| u \circ T_n(x) - u \circ T_n(y) \right| \rho(x)\rho(y)dxdy
\\ = & \limsup_{n \rightarrow \infty} TV_{\tilde{\veps}_n}(u; \rho) \leq \sigma_\eta TV(u; \rho^2),
\end{align*}
where the last inequality follows from the proof of Theorem \ref{NonlocalContinuousTVGamma}, specifically inequality \eqref{limsuppoint}.

\textbf{Step 2:}  Now consider $\bm{\eta}$ to be a piecewise constant function with compact support, satisfying (K1)-(K3). In this case $\bm{\eta} = \sum_{k=1}^{l} \bm{\eta}_k$ for some $l$ and functions $\bm{\eta}_{k}$ as in Step 1. For this step of the proof we denote by $\tTV_{n , \veps_n}^{k}$ the total variation function on the graph using $\bm{\eta}_k$. 

\textbf{Liminf inequality:} Assume that $u_n \overset{{TL^1}}{\longrightarrow} u$ as $n \rightarrow \infty$. By Step 1:
\begin{align*}
\liminf_{n \rightarrow \infty} \tTV_{n , \veps_n}(u_n) & =  \liminf_{n \rightarrow \infty} \sum_{k=1}^{l}  \tTV_{n , \veps_n}^{k}(u_n)  \\
& \geq    \sum_{k=1}^{l}   \liminf_{n \rightarrow \infty}\tTV_{n , \veps_n}^{k}(u_n) 
 \geq   \sum_{k=1}^{l} \sigma_{\eta_k} TV(u; \rho^2)
 =  \sigma_\eta TV(u; \rho^2).
\end{align*}

\textbf{Limsup inequality:} By Remark \ref{DenseGamma} it is enough to prove the limsup inequality for $u: D \rightarrow \R$ Lipschitz. Consider $u_n$ as in the proof of the limsup inequality in Step 1. Then
\begin{align*}
\limsup_{n \rightarrow \infty} \tTV_{n , \veps_n}(u_n)  & = \limsup_{n \rightarrow \infty} \sum_{k=1}^{l}  \tTV_{n , \veps_n}^{k}(u_n)  \\
& \leq    \sum_{k=1}^{l}   \limsup_{n \rightarrow \infty}\tTV_{n , \veps_n}^{k}(u_n) 
 \leq  \sum_{k=1}^{l} \sigma_{\eta_k} TV(u; \rho^2) 
 = \sigma_\eta TV(u; \rho^2).
\end{align*}

\textbf{Step 3:} Assume $\bm{\eta}$ is compactly supported and satisfies (K1)-(K3).

\textbf{Liminf Inequality:} Note that there exists an increasing  sequence of piecewise constant functions $\bm{\eta}_k : [0, \infty) \rightarrow [0, \infty)$ ($\eta$ from Step 2 is used as $\eta_k$ here), with $\bm{ \eta}_k  \nearrow  \bm{\eta} $ as $k \rightarrow \infty$ a.e. Denote by $\tTV_{n , \veps_n}^{k}$ the graph $TV$ corresponding to $\bm{\eta}_k$. If $u_n \overset{{TL^1}}{\longrightarrow} u$ as $n \rightarrow \infty$,  by Step 2  $\sigma_{\eta_k} TV(u; \rho^2) \leq  \liminf_{n \rightarrow \infty}\tTV_{n , \veps_n}^k(u_n) \leq \liminf_{n \rightarrow \infty} \tTV_{n , \veps_n}(u_n) $ for every $k\in \N$. The monotone convergence theorem implies that $\lim_{k \rightarrow \infty} \sigma_{\eta_k} = \sigma_\eta$ and so we conclude that $\sigma_{\eta} TV(u; \rho^2) \leq  \liminf_{n \rightarrow \infty}\tTV_{n , \veps_n}(u_n) $.

\textbf{Limsup inequality:} As in Steps 1 and 2 it is enough to prove the limsup inequality for $u$ Lipschitz. Consider $u_n$ as in the proof of the limsup inequality in Steps 1 and 2.   Analogously to the proof of the liminf inequality, we can find a decreasing  sequence of functions $\bm{\eta}_k : [0, \infty) \rightarrow [0, \infty)$ (of the form considered in Step 2), with $\bm{ \eta}_k  \searrow  \bm{\eta} $ as $k \rightarrow \infty$ a.e. Proceeding in an analogous way to the way we proceeded in the proof of the liminf inequality we can conclude that $  \limsup_{n \rightarrow \infty}\tTV_{n , \veps_n}(u_n)\leq \sigma_{\eta} TV(u; \rho^2)$.

\textbf{Step 4:} Consider general $\bm{\eta}$, satisfying (K1)-(K3). Note that for the liminf inequality we can use the proof given in Step 3. For the limsup inequality, as in the previous steps we can assume that $u$ is Lipschitz and we take $u_n$ as in the previous steps. Let $\alpha>0$ and define $\bm{\eta}_\alpha: [0, \infty) \rightarrow [0, \infty)$ by $\bm{\eta}_\alpha(t) := \bm{\eta}(t)$ for $t \leq \alpha$ and $\bm{\eta}_\alpha (t)=0$ for $t >\alpha$. We denote by $\tTV_{n , \veps_n}^{\alpha}$ the graph TV using $\bm{\eta}_\alpha$. 
Then
\begin{align}\label{ReduceToCompac}
\begin{split}
\tTV_{n , \veps_n}(u_n)  =  \tTV_{n , \veps_n}^\alpha(u_n) 
+ \frac{1}{\veps_n^{d+1}} &\int_{|T_n(x)- T_n(y)| > \alpha \veps_n } 
\bm{\eta} \left(\frac{|T_n(x) - T_n(y) |}{\veps_n} \right) \\ & \quad \left| u\circ T_n(x) - u \circ T_n(y) \right| \rho(x) \rho(y)dxdy.
\end{split}
\end{align}

Let us find bounds on the second term on the right hand side of the previous equality for large $n$. Indeed since for almost every $(x,y) \in D \times D$ it is true that $|x-y| \leq |T_n(x) - T_n(y)|  + 2 \|Id - T_n\|_\infty$ and $ |T_n(x) - T_n(y)|   \leq |x-y| + 2 \|Id - T_n\|_\infty$ we can use the fact that $\frac{\|Id - T_n\|_\infty}{\veps_n}  \rightarrow 0$ as $n \rightarrow \infty$ to conclude that for large enough $n$, for almost every $(x,y) \in D \times D$ for which $|T_n(x)-T_n(y)| > \alpha \veps_n$ it holds that $|x-y| \leq 2|T_n(x) - T_n(y)|$ and $|T_n(x) - T_n(y)| \leq 2 |x-y|$. We  conclude that for large enough $n$
\begin{align*}
 \frac{1}{\veps_n^{d+1}}\int_{|T_n(x)- T_n(y)| > \alpha \veps_n } & \bm{\eta}\left(\frac{|T_n(x) - T_n(y) |}{\veps_n} \right) \left| u\circ T_n(x) - u \circ T_n(y) \right|\rho(x) \rho(y) dxdy 
  \\  \leq & \frac{|| \rho||_{L^\infty(D)}^2}{\veps_n^{d+1}}\int_{|x-y| > \alpha \veps_n /2 } \bm{\eta} \left(\frac{|  x - y |}{2\veps_n} \right) \left| u\circ T_n(x) - u\circ T_n(y) \right| dxdy 
 \\ \leq & \frac{2 \Lip(u) || \rho||_{L^\infty(D)}^2}{\veps_n^{d+1}}\int_{|x-y| > \alpha \veps_n/2} \bm{\eta} \left(\frac{|x - y |}{ 2 \veps_n} \right)  | x - y | dx dy.
\end{align*}
To find bounds on the last term of the previous chain of inequalities, consider the change of variables $(x,y) \in D\times D \mapsto (x, h) $ where $x=x$ and $h = \frac{x-y}{2\veps_n}$, we deduce that:
\begin{equation*}
\frac{2}{\veps_n^{d+1}}\int_{|x-y| > \alpha \veps_n/2} \bm{\eta} \left(\frac{|x - y |}{ 2 \veps_n} \right)  | x - y | dx dy \leq  C \int_{| h| > \frac{\alpha}{4} } \eta(h)|h| dh,
\end{equation*}
where $C$ does not depend on $n$ or $\alpha$. The previous inequalities, \eqref{ReduceToCompac} and Step 3 imply that 
\begin{align*}
\limsup_{n \rightarrow \infty} \tTV_{n , \veps_n}(u_n) \leq & \limsup_{n \rightarrow \infty} \tTV_{n , \veps_n}^\alpha(u_n) + \Lip(u) || \rho||_{L^\infty(D)}^2C \int_{| h| > \frac{\alpha}{4} } \eta(h)|h| dh 
\\ \leq  & \sigma_{\eta_\alpha}TV(u; \rho^2) +  \Lip(u) || \rho||_{L^\infty(D)}^2 C \int_{| h| > \frac{\alpha}{4} } \eta(h)|h| dh.
\end{align*}

Finally, given assumptions (K3) on $\eta$, sending $\alpha $ to infinity we conclude that 
\begin{equation*}
\limsup_{n \rightarrow \infty} \tTV_{n , \veps_n}(u_n) \leq \sigma_\eta TV(u; \rho^2).
\end{equation*}
\end{proof}

We now present the proof of Theorem \ref{compact} on compactness.
\begin{proof}
Assume that $\{u_n\}_{n \in N}$ is a sequence of functions with $u_n \in L^1(D, \nu_n)$ satisfying the assumptions of the theorem.
As in Lemma \ref{LemmaCompactnessBall} and Proposition \ref{CompacCont} without loss of generality we can assume that $\bm{\eta}$ is of the form $ \bm{\eta}(t) =a $ if $t<b$ and $\bm{\eta}(t)=0$ for $t \geq b$, for some $a$ and $b$ positive constants. 

Consider the sequence of transportation maps $\left\{ T_n\right\}_{n \in \N}$ from Section \ref{OptimalMatchingResults}. Since $\left\{ \veps_n \right\}_{n \in \N}$ satisfies \eqref{HypothesisEpsilon}, estimates 
\eqref{InifinityTransportEstimate d=2} and \eqref{InifinityTransportEstimate d>2}
%, and \eqref{InifinityTransportEstimate d=1}, 
imply that for Lebesgue a.e. $z,y \in D$ with $|T_n(z)- T_n(y)| > b \veps_n$ 
it holds that $|z-y|> b\veps_n - 2\| Id - T_n\|_\infty $. For large enough $n$, we set $\tilde{\veps}_n := \veps_n - \frac{2\| Id - T_n\|_\infty}{b}>0$.  We conclude that for large $n$ and Lebesgue a.e. $z,y \in D$:
\begin{equation*}
\bm{\eta}\left(\frac{|z-y|}{\tilde{\veps}_n}  \right) \leq  \bm{\eta}\left(\frac{|T_n(z)-T_n(y)|}{\veps_n} \right).
\end{equation*}
Using this, we can conclude that for large enough $n$:
\begin{align*}
\frac{1}{\veps_n^{d+1}} \int_{D}\int_{D} & \bm{\eta}\left(\frac{|z-y|}{\tilde{\veps}_n} \right) \left| u_n \circ T_n (z) - u_n \circ T_n(y)  \right| \rho(z) \rho(y)dzdy \\
& \leq  \frac{1}{\veps_n^{d+1}} \int_{D}\int_{D}  \bm{\eta}\left(\frac{|T_n(z)-T_n(y)|}{\tilde{\veps}_n} \right) \left| u_n \circ T_n (z) - u_n \circ T_n(y)  \right| \rho(z) \rho(y) dzdy
\\&= \tTV_{n , \veps_n} (u_n).
\end{align*}
Thus
\begin{equation*}
\sup_{n \in \N} \frac{1}{\veps_n^{d+1}} \int_{D}\int_{D}  \bm{\eta}\left(\frac{|z-y|}{\tilde{\veps}_n} \right) \left| u_n \circ T_n (z) - u_n \circ T_n(y)  \right|\rho(z)\rho(y) dzdy < \infty.
\end{equation*} 
Finally noting that $\frac{\tilde{\veps}_n}{\veps_n} \rightarrow 1$ as $n \rightarrow \infty$ we deduce that:
\begin{equation*}
\sup_{n \in \N} \frac{1}{\tilde{\veps}_n} \int_{D}\int_{D}  \eta_{\tilde{\veps}_n}\left(z-y \right) \left| u_n \circ T_n (z) - u_n \circ T_n(y)  \right| \rho(z)\rho(y) dzdy < \infty.
\end{equation*} 
By Proposition \ref{CompacCont}  we conclude that $\left\{ u_n \circ T_n \right\}_{n \in \N}$ is relatively compact in $L^1(D)$ and hence $\left\{ u_n \right\}_{n \in \N}$ is relatively compact in $TL^1$.
\end{proof}

We now prove Corollary \ref{GammaPrimeter} on the $\Gamma$ convergence of perimeter.
\begin{proof}
Note that if $\left\{A_n \right\}_{n \in \N}$  is such that $A_n \subseteq \left\{X_1, \dots, X_n\right\}_{n \in \N}$ and $\chi_{A_n} \overset{{TL^1}}{\longrightarrow} \chi_{A}$ as $n \rightarrow \infty$ for some $A \subseteq D$, then the liminf inequality follows automatically from the liminf inequality in Theorem \ref{DiscreteGamma}. 
The limsup inequality is not immediate, since we cannot use the density of Lipschitz functions as we did in the proof of Theorem \ref{DiscreteGamma} given that we restrict our attention to characteristic functions.

We follow the proof of Proposition 3.5 in \cite{Chambolle} and take advantage of the coarea formula of the energies $\tTV_{n ,\veps_n}$. Consider a measurable subset $A$ of $D$. By the limsup inequality in Theorem \ref{DiscreteGamma}, we know there exists a sequence $\left\{ u_n \right\}_{n \in \N}$ (with $u_n \in L^1(D,{\nu_n})$) such that
$\limsup_{n \rightarrow \infty}\tTV_{n , \veps_n}(u_n) \leq \sigma_\eta TV(\chi_A, \rho^2)$. It is straightforward    
to verify that the functionals $\tTV_{n , \veps_n}$ satisfy the coarea formula:
$$\tTV_{n,\veps_n} (u_n)= \int_{-\infty}^{\infty}\tTV_{n,\veps_n}(\chi_{\left\{ u_n > s \right\}})ds.$$
Fix $0 < \delta< \frac{1}{2}$. Then in particular:
$$  \int_{\delta}^{1-\delta} \tTV_{n,\veps_n}(\chi_{\left\{ u_n > s \right\}})ds \leq \tTV_{n , \veps_n}(u_n). $$
For every $n$ there is $s_n \in (\delta, 1-\delta)$ such that $ \tTV_{n,\veps_n}(\chi_{\left\{ u_n > s_n \right\}}) \leq\frac{1}{1-2 \delta}  \tTV_{n , \veps_n}(u_n) $.  Define $A_n^\delta := \left\{ u_n > s_n \right\}$. It is straightforward to show that $\chi_{A_n^\delta} \overset{{TL^1}}{\longrightarrow} \chi_{A}$ as $n \rightarrow \infty$ and that $\limsup_{n \rightarrow \infty} \tTV_{n , \veps_n}(A_n^\delta) \leq \frac{1}{1-2\delta} \sigma_\eta TV(\chi_A; \rho^2)$. Taking $\delta \to 0$ and using a diagonal argument provides sets $\left\{ A_n \right\}_{n \in \N}$ such that $\chi_{A_n} \overset{{TL^1}}{\longrightarrow}  \chi_A$ as $n \rightarrow \infty$ and $\limsup_{n\rightarrow \infty}\tTV_{n , \veps_n}(\chi_{A_n}) \leq \sigma_\eta TV(\chi_A, \rho^2) $.
\end{proof}

\begin{remark}
There is an alternative proof of the limsup inequality above. It is possible to proceed in a similar fashion  as in the proof of the limsup inequality in Theorem \ref{DiscreteGamma}. In this case, instead of approximating by Lipschitz functions, one would approximate $\chi_A$ in $TL^1$ topology by characteristic functions of sets of the form
$G=E \cap D$ where $E$ is a subset of $\R^d$ with smooth boundary. As in the proof of Theorem 
\ref{DiscreteGamma}, the key is to show that for step kernels ($\bm{\eta}(r) = b$ if $r<a$ and zero otherwise)
\[ \lim_{n \to \infty} \tTV_{n, \veps_n}(\chi_G) = TV(\chi_G, \rho^2). \]
To do so one needs a  substitute for estimate \eqref{AssympLips}. 
The needed estimate follows from the following estimate: 
For all $G$ as above, there exists $\delta_0$ such that for all $n$ for which $ ||Id-T_n||_\infty \leq \delta_0$,  
\begin{equation*}
\int_{D}|\chi_G(x) -\chi_G(T_n(x) )|dx  \leq 4 \Per(E) \, ||Id-T_n||_\infty.
\end{equation*}
This estimate follows from the fact that if $\chi_G(x) \neq \chi_G(T_n(x))$ then
$d(x, \partial E) \leq |x - T_n(x)|$ and the fact that, 
for $\delta$ small enough, $|\{ x \in \R^d \::\: d(x, \partial E) < \delta \}| \leq 4 \Per(E) \delta$, which follows form Weyl's formula \cite{Weyl} for the volume of the tubular neighborhood. 
Noting that the perimeter of any set can be approximated by smooth sets (see Remark 3.42 in \cite{AFP}) and using Remark \ref{DenseGamma} we obtain the limsup inequality for the characteristic function of any measurable set.

We  remark that if one restricts the functional to the class of sets with specified volume (as in Example \ref{example1}) then each set in the class can be approximated by smooth sets satisfying the volume constraint.
This follows by  a careful modification to the density argument of Remark 3.43 in \cite{AFP}.
\label{DensityLimsuptTV}
\end{remark}

\subsection{Extension to different sets of points.} \label{other points}
Consider the setting of Theorem \ref{DiscreteGamma}. 
The only information about the points $X_i$ that the proof requires is the upper bound on the $\infty$-transportation distance between $\nu$ and the empirical measure $\nu_n$.   Theorem \ref{thm:InifinityTransportEstimate} provides such bounds when $X_i$ are i.i.d. distributed according to $\nu$.
Such randomness assumption is reasonable when modeling randomly obtained data points, but in other settings  points may be more regularly distributed and/or given deterministically. In such setting, if one  is able to obtain tighter bounds on transportation distance this would translate into better bounds on $\veps(n)$ in Theorem \ref{DiscreteGamma} for which the $\Gamma$-convergence holds. 

That is, if  $X_1, \dots, X_n, \dots$ are the given points, let $\nu_n$ still be $\frac{1}{n} \sum_{i=1}^n \delta_{X_i}$. If one can find transportation maps $T_n$ from $\nu$ to $\nu_n$
such that
 \begin{equation} \label{InifinityTransportEstimateg}
\limsup_{n \rightarrow \infty}  \frac{n^{1/d} \|Id - T_n\|_\infty }{ f(n)} \leq C
\end{equation}
for some nonnegative function $f:\N \to (0, \infty)$ then Theorem  \ref{DiscreteGamma}
would hold if 
\[ \lim_{n \rightarrow \infty} \frac{f(n)}{ n^{1/d} } \frac{1}{\veps_n}=0. \]

 We remark that $f$ must be bounded from below, since for any 
collection $V=\{X_1, \dots, X_n\}$ in $D$,
$ \sup_{y \in D} \dist(y, V) \geq c n^{-1/d}$
and thus $n^{1/d} \|Id - T_n\|_\infty \geq c$.

One special case is when $D=(0,1)^d$, $\nu$ is the Lebesgue measure and $X_1, \dots , X_n, \dots$ is a sequence of grid points on diadicaly refining grids. In this case, \eqref{InifinityTransportEstimateg} holds with $f(n)=1$ for all $n$ and thus $\Gamma$-convergence holds for $\veps_n \rightarrow 0$ such that $\lim_{n \to \infty} \frac{1}{n^{1/d} \veps_n} = 0$. Note that our results imply $\Gamma$-convergence in the $TL^1$ metric, however in this particular case, this is equivalent to the $L^1$ -metric considered in  \cite{Chambolle} and \cite{Yip} where for a function defined on the grid points we associate a function defined on $D$ by simply setting the function to be constant on the grid cells. This follows from Proposition \ref{EquivalenceTLp}.
\medskip

\subsection*{Acknowledgments}
The authors are grateful to Thomas Laurent for many valuable discussions and careful reading of an early version of the manuscript.  They are thankful to Giovanni Leoni for valuable advice and pointing out the paper of Ponce \cite{Ponce}. The authors are grateful to Michel Talagrand for letting them know of the elegant proofs of matching results in \cite{TalagrandGenericChain} and generously sharing the chapters of his upcoming book \cite{TalagrandNewBook}. 
The authors are thankful to Bob Pego for valuable advice and to Antonin Chambolle, Alan Frieze, James Nolen, and Felix Otto for enlightening discussions. DS is grateful to  NSF (grant DMS-1211760).
The research was also supported by NSF PIRE grant  OISE-0967140.
Authors are thankful to the Center for Nonlinear Analysis (NSF grant DMS-0635983) for its support.

\appendix

\section{Proof of Proposition \ref{Approximation inBV(D)}} 
\label{AppendixProofDensityweightedBV}

\begin{proof}
Using the fact that $D$ has Lipschitz boundary and the fact that $\psi$ is bounded above and below by positive constants, Theorem 10.29 in \cite{Leoni} implies that for any $u \in C^\infty(D) \cap BV(D)$ there exists a sequence $\left\{ u_n \right\}_{n \in \N} \subseteq C^\infty_c(\R^d)$ with $u_n \rightarrow_{L^1(D)} u$ and with $\int_{D}| \nabla u - \nabla u_n|\psi(x) dx \rightarrow 0  $  as $n \rightarrow \infty$. Using a diagonal argument we conclude that in order to prove Proposition \ref{Approximation inBV(D)} it is enough to prove that for every $u \in BV(D)$ there exists a sequence $\left\{u_n \right\}_{n \in \N} \subseteq C^\infty(D) \cap BV(D)$ with $u_n \rightarrow _{L^1(D)} u$ and with $\int_{D}|\nabla u_n |\psi(x) dx \rightarrow TV(u ; \psi)$ as $n \rightarrow \infty$.

\textbf{Step 1:} If $\psi$ is Lipschitz this is precisely the content of Theorem 3.4 in \cite{baldi}.

\textbf{Step 2} If $\psi$ is not necessarily Lispchitz we can find a sequence $\left\{ \psi_k \right\}_{k \in \N}$ of Lipschitz functions bounded above and below by the same constants bounding $\psi$ and with $ \psi_k \searrow \psi$. The functions $\psi_k$ can be defined as in $\eqref{ApproxRhoFromAbove}$ (replacing $\rho$ with $\psi$). 

Using Step 1, for a given $u \in BV(D)$ and for every $k \in \N$ we can find a sequence $\left\{u_{n,k} \right\}_{n \in \N}$ with $u_{n,k} \rightarrow _{L^1(D)} u$ and with $\int_{D}|\nabla u_{n,k} |\psi_k(x) dx \rightarrow TV(u ; \psi_k)$ as $n \rightarrow \infty$. By \ref{TVweightedintermsofTV} and by the dominated convergence theorem we know that $TV(u; \psi_k) = \int_{D}\psi_k(x)|Du|(x) \rightarrow  \int_{D}\psi(x) |Du|(x) = TV(u;\psi)$ as $k \rightarrow \infty$. Therefore, a diagonal argument allows us to conclude that there exists a sequence $\left\{k_n \right\}_{n \in \N}$ with the property that, $u_{n,k_n} \rightarrow_{L^1(D)} u $ and $\int_{D}|\nabla u_n |\psi_{k_n}(x) dx \rightarrow TV(u ; \psi)$ as $n \rightarrow \infty$. Taking $u_n := u_{n , k_n}$ and using the fact that that $\psi \leq \psi_{k_n}$ we obtain:

\begin{equation*}
\limsup_{n \rightarrow \infty} \int_{D} |\nabla u_n(x)| \psi(x) dx \leq \lim_{n \rightarrow \infty} \int_{D} |\nabla u_n(x)| \psi_{k_n}(x) dx = TV(u; \psi).
\end{equation*}
Since $u_n \rightarrow_{L^1(D)} u$,  the lower semicontinuity of $TV(\cdot, \psi)$ implies that $\liminf_{n \rightarrow \infty} \int_{D}| \nabla u_n(x)| \psi(x) dx  \geq TV(u; \psi) $. The desired result follows.
\end{proof}

\bibliography{Biblio}
\bibliographystyle{siam}

\end{document}